 \pgfplotsset{compat=1.17}
\newcommand{\e}{\mathbf{e}}
\newcommand{\ba}{\mathbf{a}}
\newcommand{\bb}{\mathbf{b}}
\def\NN{{\mathbb{N}}}
\DeclareMathOperator{\Ap}{Ap}
\newcommand{\depth}{\operatorname{depth}}
\renewcommand{\Im}{\operatorname{Im}\,}
\def\c{{\mathbf c}}
\def\1{{\mathbf 1}}
\def\a{{\ba}}
\def\b{{\bb}}
\def\g{{\mathbf g}}
\def\d{{\mathbf d}}
\def\e{{\mathbf e}}
\def\x{{\mathbf x}}
\def\u{{\mathbf u}}
\def\fm{{\mathfrak m}}
\newtheorem{theorem}{Theorem}[section]
\newtheorem{corollary}[theorem]{Corollary}
\newtheorem{lemma}[theorem]{Lemma}
\newtheorem{proposition}[theorem]{Proposition}
\theoremstyle{definition}
\newtheorem{conjecture}[theorem]{Conjecture}
\newtheorem{example}[theorem]{Example}
\newtheorem{notation}[theorem]{Notation}
\numberwithin{equation}{section}
\title{On the depth of simplicial affine semigroup rings}
\author{Raheleh Jafari}
\address{Mosaheb Institute of Mathematics, Kharazmi University, Tehran, Iran and School of Mathematics, Institute for Research in Fundamental Sciences (IPM), P.O. Box: 19395-
5746, Tehran, Iran}
\email{rjafari@ipm.ir}
\author{Ignacio Ojeda}
\address{Departamento de Matem\'aticas. Universidad de Extremadura, Badajoz, Spain}
\email{ojedamc@unex.es}
\subjclass[2020]{13F65, 20M14, 13C15}
\keywords{Affine semigroups, simplicial affine semigroups, semigroup rings, depth, projective dimension, Betti numbers, Ap\'ery sets.}
\thanks{The first author was in part supported by a grant from IPM (No.1402130111). 
The second author is patially supported by project PID2022-138906NB-C21 funded by MCIN/AEI/10.13039/501100011033 and by European Union NextGenerationEU/ PRTR, by research group FQM024 funded by Junta de Extremadura (Spain)/FEDER funds, by the Proyecto de Excelencia de la Junta de Andalucía (ProyExcel\_00868) and by the Proyecto de investigaci\'n del Plan Propio - UCA 2022-2023 (PR2022-011).}
\begin{document}

\begin{abstract}
We recall and delve into the different characterizations of the depth of an affine semigroup ring, providing an original characterization of depth two in three and four dimensional cases which are closely related to the existence of a maximal element in certain Ap\'{e}ry sets.
\end{abstract}

\maketitle

\section{Introduction}\label{intro}

In this paper we are concerned with studying the depth of affine semigroup rings $\Bbbk[S]$, with $S$ a simplicial affine semigroup fully embedded in $\mathbb N^d$, i.e. subalgebras  of the polynomial ring $\Bbbk[t_1,\dots,t_d]$ generated by the monomials with exponents in $S$,  where $\Bbbk$ is a field.  The semigroup  ring $\Bbbk[S]$ is a  $d$-dimensional ring with a unique monomial maximal ideal $\fm$.
Affine semigroup rings behave as local rings whose maximal ideal is the irrelevant ideal $\mathfrak{m}$. Thus, we may define the depth of an affine semigroup ring as the maximum length of a regular sequence in the maximal ideal. 

Thanks to the Auslander-Buchbaum formula (see \eqref{ecu:AB}), it is possible to relate depth to the projective dimension, producing an alternative definition of depth that we can say is the most widespread in the world of affine semigroups; and what is more important, giving rise to the study of the depth in terms of the last non-zero Betti numbers.

It is known that the last nonzero Betti numbers of affine semigroup rings have a strong relationship with certain Ap\'ery sets (see, e.g. \cite{OjVi4, JaYa}). Furthermore, it is also known that maximal depth, i.e. Cohen-Macaulyness, has a characterization in terms of Ap\'ery sets when the affine semigroup giving rise to the semigroup ring structure is simplicial (see Proposition~\ref{prop:1.6} due to Rosales and Garc\'{\i}a-S\'anchez \cite[Proposition 1.6]{R-GS98}). Finally, although not explicitly, connections between Betti numbers, Ap\'ery sets and depth are also established in \cite{Campillo-Gimenez}. %

For convenience, we will assume that each affine semigroup considered is simplicial. In this framework, we provide a new characterization of simplicial affine semigroups with depth two, and dimension three or four, in terms of Ap\'ery sets (see Theorems \ref{th:depth2} and \ref{th_depth2b}). %

The paper is organized as follows: after a preliminary section in which we lay the groundwork for most of the notations and the basic results of the entire article, in Section \ref{Sect_Ap} we recall the notion of Ap\'ery sets of an affine semigroup with respect to a subset of elements of the semigroup and we show that these sets may be effectively computed using Gr\"obner bases. Next, we establish a couple of well-known results relating depth and the  Ap\'ery  sets (Propositions~\ref{prop:depth1} and \ref{prop:1.6}),  that characterize cases of extreme depth. In Proposition~\ref{max-2}, we give a necessary and sufficient condition for the Ap\'ery set of a simplicial affine semigroup with respect to a subset of extremal rays to have a maximal element with respect to the partial order determined by the semigroup (Proposition \ref{max-2}).

In Section~\ref{Sect_Betti} we show the combinatorial characterization of the Betti numbers graded by an affine semigroup that will be fundamental in Section~\ref{Sect q=2 d=3} and to a lesser extent in Section~\ref{Sect q=2 d=4}. Now, in Section~\ref{Sect q=2 d=3}, we give a necessary and sufficient condition for a simplicial affine subsemigroup of $\mathbb{N}^3$ to have depth two (Theorem~\ref{th:depth2}). Notice that this completes all possible depth cases for dimension three, since depth one and depth three are already characterized. %
In Section~\ref{Sect q=2 d=4}, we use Koszul's complexes to characterize depth two in dimension four (Theorem~\ref{th_depth2b}) and %
we provide a combinatorial interpretation of our result in terms of the simplicial complexes introduced in Section~\ref{Sect_Betti} (Proposition~\ref{prop:Tc}). %

We end the paper with Conjecture~\ref{conj_depth} which claims that if the depth of a simplicial affine semigroup is two, then there exists a subset of extremal rays of cardinality two with respect to which the corresponding Ap\'ery set has a maximal element. Our conjecture is optimistically motivated by the cases of extreme depth (Proposition~\ref{prop:depth1} and Proposition~\ref{prop:1.6}) and the results obtained in Section~\ref{Sect q=2 d=3} and Section~\ref{Sect q=2 d=4} where it is shown to be true for $d \leq 4$. We end the paper by discussing why the conjecture cannot be extended to higher depths, in general.

\section{Generalities and notation}\label{prelim}

Throughout this paper $\mathcal{S}$ denotes a simplicial affine semigroup with (fixed) minimal generating set $\mathcal A := \{\a_1,\dots,\a_e\} \subset \NN^d$. Without loss of generality, we suppose that $\operatorname{rank} \mathbb{Z} \mathcal A = d$, where $ \mathbb{Z}\mathcal A = \sum_{i=1}^e \mathbb{Z} \a_i$ is the subgroup of $\mathbb{Z}^d$ generated by $\mathcal A$.

Recall that the fact that $\mathcal{S}$ is \emph{simplicial} means that the rational cone \[\operatorname{pos}(\mathcal A) := \left\{ \sum_{i=1}^e q_i \ba_i \mid q_i \in \mathbb{Q}_{\geq 0},\ i = 1, \ldots, e \right\} \subset \mathbb{Q}^d\] has $d$ minimal generators also called \emph{extremal rays}. Without lost of generality, from now on we suppose that $E:=\{\a_1, \ldots, \a_d\}$ is $\mathbb{Q}-$linearly independent, generates $\operatorname{pos}(\mathcal A)$ and $\a_i,\ i = 1, \ldots, d$, is the component-wise smallest vector of $\mathcal{A}$ in the corresponding extremal ray.

Let $\Bbbk[\mathbf x] = \Bbbk[x_1, \ldots, x_e]$ be the polynomial ring in $e$ indeterminates over an arbitrary field $\Bbbk$ and let \[\Bbbk[S] = \bigoplus_{\ba \in \mathcal{S}} \Bbbk \{\mathbf{t}^{\ba}\}\] be the \emph{affine semigroup ring of $\mathcal{S}$}. 

The ring $\Bbbk[\mathbf x]$ has a natural $\mathcal{S}-$graded structure given by assigning degree $\ba_i$ to $x_i,\ i = 1, \ldots, e$; indeed, \[\Bbbk[\mathbf x] = \bigoplus_{\ba \in \mathcal{S}} \Bbbk[\mathbf x]_\ba,\] where $\Bbbk[\mathbf x]_\ba$ denotes the $\Bbbk-$vector space generated by the monomials $\mathbf{x}^{\mathbf{u}} := x_1^{u_1} \cdots x_e^{u_e}$ such that $\sum_{i=1}^e u_i \ba_i = \ba$, and $\Bbbk[\mathbf x]_\ba \cdot \Bbbk[\mathbf x]_{\ba'} = \Bbbk[\mathbf x]_{\ba+\ba'}$. The surjetive $\mathcal{S}-$graded ring homomorphism \[\varphi_0 : \Bbbk[\mathbf{x}] \longrightarrow \Bbbk[S]; x_i \mapsto \mathbf{t}^{\ba_i}\] endows $\Bbbk[S]$ with a structure of $\mathcal{S}-$graded $\Bbbk[\mathbf x]-$module. The kernel of $\varphi_0$, denoted $I_\mathcal{A}$, is the \emph{toric ideal of $\mathcal{S}$}; clearly $\Bbbk[S] \cong \Bbbk[\mathbf x]/I_\mathcal{A}$. Thus, minimal generating systems of $I_\mathcal A$ gives rise minimal representations of $\Bbbk[S]$ as $\Bbbk[\mathbf x]-$module. Indeed, if $M_{\mathcal A} := \{f_1, \ldots, f_{\beta_1}\}$ is a minimal system of generators of $I_{\mathcal A}$, then \[\Bbbk[\mathbf x]^{\beta_1} \stackrel{\varphi_1}{\longrightarrow} \Bbbk[\mathbf x] \stackrel{\varphi_0}{\longrightarrow} \Bbbk[\mathcal{S}] \to 0\]  is an exact sequence, where $\varphi_1$ is the homomorphism of $\Bbbk[\mathbf x]-$modules whose matrix with respect to the corresponding standard bases is $(f_1, \ldots, f_{\beta_1})$. Since $I_{\mathcal A}$ is $\mathcal{S}-$homogeneous (equivalently, a binomial ideal, see e.g. \cite[Theorem 1]{MaOj}), then $\varphi_1$ is also $\mathcal{S}-$graded.

Now, if $\ker(\varphi_1) \neq 0$, we can consider a minimal system of $\mathcal{S}-$graded generators of $\ker(\varphi_1)$, proceed as above defining a $\mathcal{S}-$graded homomorphism of $\Bbbk[\mathbf x]-$modules $\varphi_2$ and so on. By the Hilbert Syzygy Theorem, this process cannot continue indefinitely, giving rise to the \emph{$\mathcal{S}-$graded minimal free resolution of $\Bbbk[\mathcal{S}]$}:
\[0 \to \Bbbk[\mathbf x]^{\beta_p} \stackrel{\varphi_{p}}{\longrightarrow} \cdots \stackrel{\varphi_{2}}{\longrightarrow} \Bbbk[\mathbf x]^{\beta_1} \stackrel{\varphi_1}{\longrightarrow} \Bbbk[\mathbf x] \stackrel{\varphi_0}{\longrightarrow} \Bbbk[\mathcal{S}] \to 0.\] For $\bb \in \mathcal{S},$ we write $\beta_{i, \bb}$ for the number  of minimal generators of $\ker \varphi_i$ of $\mathcal{S}-$degree $\b$. Of course, $\beta_{i,\b}$ may be $0$. Here it is convenient to recall that $\beta_{i, \bb} = \dim_{\Bbbk} \operatorname{Tor}_i^{\Bbbk[\mathbf{x}]}(\Bbbk, \Bbbk[\mathcal{S}])_\bb$ (see, e.g. \cite[Lemma 1.32]{Miller-Sturmfels}) is an invariant of $\Bbbk[\mathcal{S}]$ for every $i > 0$ and $\bb \in \mathcal{S}$. The integer number $\beta_{i,\b}$ is called the \emph{$i-$th Betti number of $\Bbbk[\mathcal{S}]$ in degree $\bb$} and $\beta_i = \sum_{\bb \in \mathcal{S}} \beta_{i, \b}$ is called the \emph{$i-$th (total) Betti number of $\Bbbk[\mathcal{S}]$}. Clearly, 
$\Bbbk[\x]^{\beta_i} = \bigoplus_{\b \in \mathcal S} \Bbbk[\x]^{\beta_{i,\b}}$, for every $i = 1, \ldots, p$.

Notice that there are finitely many nonzero Betti numbers. The elements $\b \in \mathcal{S}$ such that $\beta_{1,\b} \neq 0$ are called in literature Betti elements and the set of Betti elements of $\mathcal{S}$ is usually denoted by $\operatorname{Betti}(\mathcal{S})$ (see, \cite{uniquely} for more details). 

The maximum $i$ such that $\beta_i \neq 0$ is called the \emph{projective dimension of $\Bbbk[\mathcal{S}]$}, denoted by $\operatorname{pd}_{\Bbbk[\mathbf x]}(\Bbbk[S])$. By the Auslander-Buchsbaum formula (see, e.g. \cite[Theorem 1.3.3]{BH98}), one has  \begin{equation}\label{ecu:AB}\operatorname{depth}(\Bbbk[S]) = e - \operatorname{pd}_{\Bbbk[\mathbf x]}(\Bbbk[S]).\end{equation} Recall that when $\depth(\Bbbk[\mathcal{S}])=d$ (equivalently, $\operatorname{pd}_{\Bbbk[\mathbf x]}(\Bbbk[S])=\operatorname{codim}(\Bbbk[S]) = e-d$), then $\Bbbk[\mathcal{S}]$ is Cohen-Macaulay. We extend this terminology to $\mathcal{S}$, by saying that $\mathcal{S}$ is \emph{Cohen-Macaulay} when $\Bbbk[\mathcal{S}]$ is.

\section{Ap\'ery sets and depth}\label{Sect_Ap}

The {\em Ap\'{e}ry set} of an element $\b \in \mathcal{S}$ is defined as \[\Ap(\mathcal{S},\b) := \{\a\in \mathcal{S} \ \mid \ \a-\b\notin \mathcal{S} \}.\] Since $\mathcal{S} \subset\NN^d$, for $\b \neq \mathbf 0$ we have  $\mathbf 0 \in \Ap(\mathcal{S},\b)$. For a finite subset $\mathcal B$ of $\mathcal{S}$, we define \[\Ap(\mathcal{S}, \mathcal B) :=\{\a\in \mathcal{S} \ ; \ \a-\b\notin S, \text{ for all } \b\in \mathcal B\} = \bigcap_{\b \in \mathcal B} \Ap(\mathcal{S}, \b).\] It is known that $\Ap(\mathcal{S}, \mathcal B)$ is finite if and only if $\operatorname{pos}(\mathcal A) = \operatorname{pos}(\mathcal B)$ (see, e.g. \cite[Theorem 2.6]{Barroso-2021}). In particular, $\Ap(\mathcal{S}, E) = \cap_{i=1}^d \Ap(\mathcal{S},\a_i)$ is a finite set. 

Given $\delta \subseteq \{1, \ldots, d\}$ and  a monomial order $\prec$ on $\Bbbk[\mathbf x]$, set \[Q := \left\{\mathbf{x}^\mathbf{u} \in \Bbbk[\{x_i\}_{i \not\in \delta}] \right\} \setminus \operatorname{in}_\prec(I_\mathcal A + \langle \{x_i\}_{i \in \delta} \rangle).\]

The following result is a generalization of \cite[Theorem 3.3]{OjVi3}, which can also be deduced from \cite[Theorem~2.1]{Barroso-2021}.

\begin{proposition}\label{prop:Ap}
With the above notation, the map \[Q \longrightarrow \bigcap_{i \in \delta} \Ap(\mathcal{S},\ba_i);\quad \mathbf{x}^\mathbf{u} \longmapsto \sum_{i\not\in \delta} u_i \a_i\] is a bijection.
\end{proposition}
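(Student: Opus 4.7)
The plan is to show that $Q$ and $\bigcap_{i \in \delta} \Ap(\mathcal{S}, \a_i)$ both index $\Bbbk$-bases of a single quotient ring, and that the proposed map is exactly the bijection between these index sets arising from the two descriptions. Set $J := I_\mathcal{A} + \langle x_i : i \in \delta\rangle$ and $R := \Bbbk[\mathbf{x}]/J$.

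First, standard Gr\"obner basis theory gives that the monomials $\mathbf{x}^{\mathbf{u}} \notin \operatorname{in}_\prec(J)$ form a $\Bbbk$-basis of $R$. Since $\{x_i\}_{i \in \delta}\subseteq J\subseteq \operatorname{in}_\prec(J)$, any such standard monomial must lie in $\Bbbk[\{x_i\}_{i\notin\delta}]$; thus the standard monomials of $J$ are exactly the elements of $Q$, and $\mathbf{x}^{\mathbf{u}}\mapsto \mathbf{x}^{\mathbf{u}}+J$ identifies $Q$ with a $\Bbbk$-basis of $R$.

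On the other hand, the surjection $\varphi_0$ has kernel $I_\mathcal{A}$ and sends $x_i$ to $\mathbf{t}^{\a_i}$, so it descends to a ring isomorphism $R\cong \Bbbk[S]/K$, where $K:= \langle \mathbf{t}^{\a_i}: i \in \delta\rangle\subset \Bbbk[S]$. Because $K$ is $\mathcal{S}$-homogeneous and generated by monomials, a $\Bbbk$-basis of $\Bbbk[S]/K$ consists of the classes $\mathbf{t}^{\mathbf{b}}+K$ with $\mathbf{t}^{\mathbf{b}}\notin K$; unwinding the definition of $K$, this says exactly that $\mathbf{b}-\a_i\notin \mathcal{S}$ for every $i\in \delta$, i.e.\ $\mathbf{b}\in \bigcap_{i\in \delta}\Ap(\mathcal{S},\a_i)$.

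Putting the two identifications together, a standard monomial $\mathbf{x}^{\mathbf{u}}\in Q$ corresponds to the coset $\mathbf{t}^{\sum_{i\notin \delta}u_i\a_i}+K$ in $\Bbbk[S]/K$. Since $\mathbf{x}^{\mathbf{u}}$ represents a nonzero element of $R$, this coset is a nonzero basis element, which forces $\sum_{i\notin \delta}u_i\a_i\in \bigcap_{i\in \delta}\Ap(\mathcal{S},\a_i)$; hence the proposition's map is well-defined, and as the composition of three bijections (between $Q$, the standard basis of $R$, the monomial basis of $\Bbbk[S]/K$, and the Ap\'ery set) it is itself a bijection. The main---and really only---obstacle is verifying that $\varphi_0$ descends to an isomorphism $R\cong \Bbbk[S]/K$, i.e.\ that $\varphi_0^{-1}(K)=J$; this follows quickly from the surjectivity of $\varphi_0$, since any preimage of an element of $K$ differs from some combination $\sum_{i\in \delta} h_i x_i$ by an element of $I_\mathcal{A}$.
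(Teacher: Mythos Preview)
Your proof is correct and takes a genuinely different, more structural route than the paper's. The paper proceeds by direct element-chasing: it verifies well-definedness by contradiction (if $\sum_{i\notin\delta} u_i\a_i - \a_j \in \mathcal{S}$ for some $j\in\delta$, one builds a binomial $\mathbf{x}^\mathbf{u}-x_j\mathbf{x}^\mathbf{v}\in I_\mathcal{A}$ forcing $\mathbf{x}^\mathbf{u}\in\operatorname{in}_\prec(J)$), checks injectivity by the same device, and obtains surjectivity via the division algorithm. You instead recognize both $Q$ and $\bigcap_{i\in\delta}\Ap(\mathcal{S},\a_i)$ as indexing sets for $\Bbbk$-bases of the single ring $\Bbbk[\mathbf{x}]/J\cong\Bbbk[S]/K$, and observe that the isomorphism carries the standard-monomial basis into the monomial basis of $\Bbbk[S]/K$, which forces the bijection. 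Your approach explains \emph{why} the bijection exists (both sides parametrize monomial bases of one quotient) and makes the role of the Gr\"obner basis transparent as merely a choice of such a basis; the paper's hands-on argument is more self-contained and avoids the small linear-algebra step you leave implicit, namely that an isomorphism sending one basis \emph{into} another basis of the same space must send it \emph{onto} that basis.
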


\begin{proof}
Let us prove that the map is a well-defined bijection. If $\mathbf{x}^\mathbf{u} \in Q$, then $\mathbf{q} = \sum_{i \not\in \delta} u_i \ba_i \in \bigcap_{i \in \delta} \Ap(\mathcal{S},\ba_i)$; otherwise, there exists $j \in \delta$ such that $\mathbf{q}-\ba_j = \sum_{i=1}^e v_i \ba_i \in \mathcal{S}$. So, $\mathbf{x}^\mathbf{u} - x_j \mathbf{x}^\mathbf{v} \in I_{\mathcal A}$ and, consequently, $\mathbf{x}^\mathbf{u} \in \operatorname{in}_\prec(I_{\mathcal A} + \langle \{x_i\}_{i \in \delta} \rangle)$, a contradiction. Moreover, if there exists  $\mathbf{x}^\mathbf{w} \in Q$ with $\mathbf{q} = \sum_{i \not\in \delta} w_i \ba_i$, then $\mathbf{x}^\mathbf{u} - \mathbf{x}^\mathbf{w} \in I_\mathcal{A}$. So, either $\mathbf{x}^\mathbf{u}$ or $\mathbf{x}^\mathbf{w}$  lie in $\operatorname{in}_\prec(I_{\mathcal A} + \langle \{x_i\}_{i \in \delta} \rangle)$ which is not possible by hypothesis. Thus, the map is injective. Finally, if $\mathbf{q} \in \bigcap_{i \in \delta} \Ap(\mathcal{S},\ba_i)$, then $\mathbf{q} = \sum_{i \not\in \delta} v_i \ba_i$ for some $v_i \in \mathbb{N},\ i \not\in \delta$. Now, if $\mathbf{x}^\mathbf{u}$ is the remainder of the division of  $\mathbf{x}^\mathbf{v}$ by $I_{\mathcal{A}} + \langle \{x_i\}_{i \in \delta} \rangle$, then $\mathbf{x}^\mathbf{u} \in Q$ with $\sum_{i \not\in \delta} u_i \ba_i = \mathbf{q}$.
\end{proof}

\begin{notation}\label{not:po}
Let $\preceq_{\mathcal{S}}$ be the partial order on $\mathcal{S}$ given by $\ba \preceq_{\mathcal{S}} \ba'$ if and only if $\ba'-\ba \in \mathcal{S}$. Notice that $\mathbf 0 \in \mathbb N^d$ is the only minimal element of $S$ for $\preceq_{\mathcal{S}}$.
Moreover, if $\ba' \in \Ap(\mathcal{S}, \mathcal{B})$ and $\ba \in \mathcal{S}$ is such that $\ba \preceq_\mathcal{S} \ba'$, then $\ba \in \Ap(\mathcal{S}, \mathcal{B})$.
\end{notation}

\begin{corollary}\label{cor:Ap2}
With the above notation, $\mathbf{x}^{\mathbf u} \in Q$ divides $\mathbf{x}^{\mathbf{v}} \in Q$ if and only if $\sum_{i \not\in \delta} v_i a_i \in  \bigcap_{i \in \delta} \Ap(\mathcal{S},\ba_i)$ and $\sum_{i \not\in \delta} u_i a_i \preceq_\mathcal{S} \sum_{i \not\in \delta} v_i a_i$; in particular, $\sum_{i \not\in \delta} u_i a_i \in  \bigcap_{i \in \delta} \Ap(\mathcal{S},\ba_i)$.
\end{corollary}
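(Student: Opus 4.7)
The plan is to derive both implications from Proposition~\ref{prop:Ap} together with the downward-closure of Apéry sets recorded in Notation~\ref{not:po}. First I would set up: because $\mathbf{x}^{\mathbf u},\mathbf{x}^{\mathbf v}\in Q\subset\Bbbk[\{x_i\}_{i\not\in\delta}]$, the entries $u_i,v_i$ vanish for $i\in\delta$, and Proposition~\ref{prop:Ap} already places $\sum_{i\not\in\delta}u_i\ba_i$ and $\sum_{i\not\in\delta}v_i\ba_i$ in $\bigcap_{i\in\delta}\Ap(\mathcal{S},\ba_i)$.

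For the forward direction I would observe that $\mathbf{x}^{\mathbf u}\mid\mathbf{x}^{\mathbf v}$ forces $v_i\ge u_i$ coordinate-wise, so $\sum_{i\not\in\delta}(v_i-u_i)\ba_i\in\mathcal{S}$, which is exactly the relation $\sum_{i\not\in\delta}u_i\ba_i\preceq_\mathcal{S}\sum_{i\not\in\delta}v_i\ba_i$; the Apéry-set condition on $\sum_{i\not\in\delta}v_i\ba_i$ is the image side of the bijection of Proposition~\ref{prop:Ap}. The ``in particular'' clause then follows at once from Notation~\ref{not:po}, since $\bigcap_{i\in\delta}\Ap(\mathcal{S},\ba_i)$ is downward closed under $\preceq_\mathcal{S}$.

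For the reverse direction I would use the hypothesis $\sum_{i\not\in\delta}u_i\ba_i\preceq_\mathcal{S}\sum_{i\not\in\delta}v_i\ba_i$ to write $\sum_{i\not\in\delta}v_i\ba_i-\sum_{i\not\in\delta}u_i\ba_i=\sum_{i=1}^{e}w_i\ba_i$ for some $\mathbf w\in\mathbb{N}^e$. Then $\mathbf{x}^{\mathbf u+\mathbf w}$ has the same $\mathcal{S}$-degree as $\mathbf{x}^{\mathbf v}$, so $\mathbf{x}^{\mathbf u+\mathbf w}-\mathbf{x}^{\mathbf v}\in I_\mathcal{A}$. By the uniqueness in Proposition~\ref{prop:Ap}, the normal form of $\mathbf{x}^{\mathbf u+\mathbf w}$ modulo $I_\mathcal{A}+\langle\{x_i\}_{i\in\delta}\rangle$ is the unique element of $Q$ carrying that $\mathcal{S}$-degree, namely $\mathbf{x}^{\mathbf v}$. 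This yields $\mathbf{x}^{\mathbf u}\cdot\mathbf{x}^{\mathbf w}\equiv\mathbf{x}^{\mathbf v}\pmod{I_\mathcal{A}+\langle\{x_i\}_{i\in\delta}\rangle}$, which is the divisibility of $Q$-representatives asserted by the statement.

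The main obstacle is precisely this last passage: at the level of pure monomial divisibility in $\Bbbk[\mathbf x]$ the order $\preceq_\mathcal{S}$ can be strictly coarser, so the divisibility in the statement must be understood through the identification of $Q$ with $\bigcap_{i\in\delta}\Ap(\mathcal{S},\ba_i)$ supplied by Proposition~\ref{prop:Ap}, and the normal-form argument above is exactly what bridges the two.
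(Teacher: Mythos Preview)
Your forward direction is correct and is essentially the paper's argument.

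For the converse the paper proceeds differently from you: instead of passing through normal forms, it shows directly that the difference $\sum_{i\notin\delta}v_i\ba_i-\sum_{i\notin\delta}u_i\ba_i$ (and likewise $\sum_{i\notin\delta}u_i\ba_i$) lies in $\bigcap_{i\in\delta}\Ap(\mathcal{S},\ba_i)$, and then concludes ``by Proposition~\ref{prop:Ap}'' that $\mathbf{x}^{\mathbf u}\mid\mathbf{x}^{\mathbf v}$. That last inference is precisely the obstacle you isolate: knowing that three elements lie in the Ap\'ery set and add up correctly does not force their $Q$-representatives to multiply on the nose, because $Q$ is not closed under products. In fact literal monomial divisibility can genuinely fail. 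Take $\mathcal{S}=\langle 5,7,9,11\rangle\subset\mathbb{N}$ with $\delta=\{1\}$: then $9,18\in\Ap(\mathcal{S},5)$ and $9\preceq_\mathcal{S}18$, but under degrevlex with $x_1\succ x_2\succ x_3\succ x_4$ the $Q$-representatives are $x_3$ and $x_2x_4$, and $x_3\nmid x_2x_4$.

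So your diagnosis is right. What either argument actually establishes is divisibility in $\Bbbk[\mathbf{x}]/\big(I_\mathcal{A}+\langle\{x_i\}_{i\in\delta}\rangle\big)$, i.e.\ that the bijection of Proposition~\ref{prop:Ap} carries $\preceq_\mathcal{S}$ to divisibility in the quotient. Your normal-form argument reaches that conclusion cleanly; the paper's Ap\'ery-set argument reaches the same point but states the final line as though literal divisibility in $\Bbbk[\mathbf x]$ followed. Your proposal is thus at least as complete as the paper's proof, and your caveat in the last paragraph is not an obstacle you failed to overcome but an accurate observation about what the statement can mean.
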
 
\begin{proof}
If $\mathbf{x}^{\mathbf u} \in Q$ divides $\mathbf{x}^{\mathbf{v}} \in Q$, then $\mathbf{x}^{\mathbf{v}} = \mathbf{x}^{\mathbf{w}} \mathbf{x}^{\mathbf{u}}$ for some $\mathbf{x}^\mathbf{w} \in \Bbbk[\{x_i\}_{i \not\in \delta}]$. If $\mathbf{x}^{\mathbf w} \in \operatorname{in}_\prec(I_\mathcal A + \langle \{x_i\}_{i \in \delta} \rangle)$ then $\mathbf{x}^{\mathbf v} \not\in Q$, in contradiction with the hypothesis. So $\mathbf{x}^{\mathbf{w}} \in Q$ and, by Proposition \ref{prop:Ap}, we have that $\sum_{i \not\in \delta} v_i a_i \in \bigcap_{i \in \delta} \Ap(\mathcal{S},\ba_i)$ and $\sum_{i \not\in \delta} v_i a_i - \sum_{i \not\in \delta} u_i a_i = \sum_{i \not\in \delta} w_i a_i \in \bigcap_{i \in \delta} \Ap(\mathcal{S},\ba_i) \subset \mathcal{S}$, that is, $\sum_{i \not\in \delta} u_i a_i \preceq_\mathcal{S} \sum_{i \not\in \delta} v_i a_i$. 

Conversely, if  $\sum_{i \not\in \delta} v_i a_i \in  \bigcap_{i \in \delta} \Ap(\mathcal{S},\ba_i)$ and $\sum_{i \not\in \delta} u_i a_i \preceq_\mathcal{S} \sum_{i \not\in \delta} v_i a_i$, then $\sum_{i\not\in \delta} v_i \a_i - \sum_{i\not\in \delta} u_i \a_i$ $= \sum_{i=1}^e w_i \a_i \in \mathcal{S}$. If $\sum_{i=1}^e w_i \a_i \not\in \bigcap_{i \in \delta} \Ap(\mathcal{S},\ba_i),$ then there exists $j \in \delta$ such that $\sum_{i=1}^e w_i \a_i - \a_j \in \mathcal{S}$ and consequently, $\sum_{i\not\in \delta} v_i \a_i - \a_j = \sum_{i\not\in \delta} u_i \a_i + \sum_{i=1}^e w_i \a_i - \a_j \in \mathcal{S}$, that is, $\sum_{i\not\in \delta} v_i \a_i \not\in \bigcap_{i \in \delta} \Ap(\mathcal{S},\ba_i),$ in contradiction with the hypothesis. Analogously, we have that $\sum_{i\not\in \delta} u_i \a_i \in \bigcap_{i \in \delta} \Ap(\mathcal{S},\ba_i)$. Therefore, by Proposition \ref{prop:Ap}, $\mathbf{x}^{\mathbf{u}} \in Q$ divides $\mathbf{x}^{\mathbf{v}} \in Q$.
\end{proof}

The following characterization of $\Bbbk[\mathcal{S}]$ to have depth one is a consequence of \cite[Theorem~6 and Proposition~16]{OjVi4}.

\begin{proposition}\label{prop:depth1}
The ring $\Bbbk[\mathcal{S}]$ has depth one if and only if $\Ap(\mathcal{S},\b)$ has a maximal element with respect to $\preceq_\mathcal{S}$ for some (equivalently all) $\b\in \mathcal{S}$. 
\end{proposition}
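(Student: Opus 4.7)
The plan is to reduce the statement to a socle computation via the $\mathcal{S}$-grading. Since $\Bbbk[\mathcal{S}]$ is a domain, for any $\b \in \mathcal{S} \setminus \{\mathbf 0\}$ the monomial $\mathbf{t}^\b$ is a nonzerodivisor lying in $\mathfrak m$. Hence I would argue that $\depth(\Bbbk[\mathcal{S}]) = 1$ if and only if $\depth\bigl(\Bbbk[\mathcal{S}]/(\mathbf{t}^\b)\bigr) = 0$, which in turn is equivalent to the socle $\Soc\bigl(\Bbbk[\mathcal{S}]/(\mathbf{t}^\b)\bigr)$ being nonzero. Because $\depth$ is an intrinsic invariant of the ring, the ``some vs.\ all'' dichotomy will drop out automatically once the equivalence is established for an arbitrary nonzero $\b$.

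The main step is then to reinterpret the socle combinatorially. Since $(\mathbf{t}^\b)$ is a monomial ideal in the $\mathcal{S}$-graded ring $\Bbbk[\mathcal{S}]$, the quotient $\Bbbk[\mathcal{S}]/(\mathbf{t}^\b)$ is $\mathcal{S}$-graded with $\Bbbk$-basis $\{\mathbf{t}^\w : \w \in \Ap(\mathcal{S},\b)\}$, and its socle inherits this grading. I would then show that $\mathbf{t}^\w$ lies in the socle if and only if $\mathbf{t}^{\w+\a_i} \in (\mathbf{t}^\b)$ for every minimal generator $\a_i$, which by definition of $\Ap(\mathcal{S},\b)$ translates to $\w + \a_i \notin \Ap(\mathcal{S},\b)$ for every $i$. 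A short check, using that $\mathcal A$ generates $\mathcal{S}$ together with the downward closedness of $\Ap(\mathcal{S},\b)$ with respect to $\preceq_\mathcal{S}$ recorded in Notation~\ref{not:po}, upgrades this to $\w + \a \notin \Ap(\mathcal{S},\b)$ for every $\a \in \mathcal{S}\setminus\{\mathbf 0\}$, which is exactly the maximality of $\w$ with respect to $\preceq_\mathcal{S}$.

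There is no substantial obstacle: the proof reduces to a dictionary between the socle of $\Bbbk[\mathcal{S}]/(\mathbf{t}^\b)$ and the maximal elements of the finite poset $(\Ap(\mathcal{S},\b), \preceq_\mathcal{S})$. The only points demanding care are the $\mathcal{S}$-homogeneity of the socle, so that its nonvanishing may be detected on a monomial $\mathbf{t}^\w$, and the reduction from testing against arbitrary $\a \in \mathcal{S}\setminus\{\mathbf 0\}$ to testing only against the minimal generators $\a_1,\dots,\a_e$; both are immediate from the $\mathcal{S}$-graded monomial structure.
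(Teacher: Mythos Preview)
Your argument is correct and more self-contained than the paper's treatment: the authors give no proof at all but simply cite \cite[Theorem~6 and Proposition~16]{OjVi4}. Your socle computation is essentially what underlies that reference (pseudo-Frobenius elements are precisely the $\preceq_\mathcal{S}$-maximal elements of Ap\'ery sets, and their existence is equivalent to nonvanishing of the socle of the quotient), so the mathematical content coincides; the difference is that you have unpacked the argument explicitly rather than deferring to an external source, which makes your version more informative for a reader.

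One small inaccuracy in your closing paragraph: the poset $(\Ap(\mathcal{S},\b),\preceq_\mathcal{S})$ is \emph{not} finite when $d\geq 2$, since the paper recalls just before Proposition~\ref{prop:Ap} that $\Ap(\mathcal{S},\mathcal{B})$ is finite only when $\operatorname{pos}(\mathcal{B})=\operatorname{pos}(\mathcal{A})$, and a single element cannot span a $d$-dimensional cone. This does not damage your proof---the dictionary between socle elements of $\Bbbk[\mathcal{S}]/(\mathbf{t}^\b)$ and $\preceq_\mathcal{S}$-maximal elements of $\Ap(\mathcal{S},\b)$ nowhere invokes finiteness---so simply drop the word ``finite.''
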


Note that, by Corollary \ref{cor:Ap2} and Proposition \ref{prop:depth1}, $\depth(\Bbbk[\mathcal{S}])=1$ if and only if the corresponding set $Q$ has a maximal element for the partial order given by divisibility of monomials of $\Bbbk[\mathbf x]$.

The case of $\Bbbk[\mathcal{S}]$ having (maximal) depth $d$, that is, $\mathcal{S}$ is Cohen-Macaulay, is also characterized in terms of the Ap\'ery sets.

\begin{proposition}\cite[Corollary 1.6]{R-GS98}\label{prop:1.6}. The semigroup $\mathcal{S}$ is Cohen-Macaulay  if and only if for all $\a, \b \in \Ap(\mathcal{S}, E)$ such that $\b - \a \in \sum_{i=1}^d \mathbb{Z} \a_i$ we have $\a = \b$.
\end{proposition}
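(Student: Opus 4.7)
The plan is to exhibit a Noether normalization of $\Bbbk[\mathcal{S}]$ under which the Cohen-Macaulay property visibly becomes the stated Apéry condition. First I would set $A := \Bbbk[\mathbf{t}^{\a_1},\ldots,\mathbf{t}^{\a_d}] \subseteq \Bbbk[\mathcal{S}]$. Because $\a_1,\ldots,\a_d$ are $\mathbb{Q}$-linearly independent, the monomials $\mathbf{t}^{\a_1},\ldots,\mathbf{t}^{\a_d}$ are algebraically independent over $\Bbbk$, so $A$ is a polynomial ring in $d$ variables. Since $\operatorname{pos}(\mathcal{A})=\operatorname{pos}(E)$, for each $\a_i$ there exists $N \in \mathbb{N}$ with $N \a_i \in \sum_{j=1}^d \mathbb{N}\a_j$, so $\mathbf{t}^{\a_i}$ is integral over $A$; hence $\Bbbk[\mathcal{S}]$ is a finite $A$-module and $\dim A = d = \dim \Bbbk[\mathcal{S}]$. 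In the graded setting this finite extension yields $\operatorname{depth}(\Bbbk[\mathcal{S}]) = \operatorname{depth}_A(\Bbbk[\mathcal{S}])$, so $\mathcal{S}$ is Cohen-Macaulay if and only if $\Bbbk[\mathcal{S}]$ is a maximal Cohen-Macaulay $A$-module.

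Next I would invoke the Auslander-Buchsbaum formula over the regular ring $A$: a finitely generated $A$-module is Cohen-Macaulay of dimension $d$ if and only if it is free. So the problem reduces to showing that $A$-freeness of $\Bbbk[\mathcal{S}]$ is equivalent to the claimed Apéry condition. A natural candidate generating set is $B := \{\mathbf{t}^\a \mid \a \in \Ap(\mathcal{S},E)\}$: iterated subtraction of the $\a_i$ expresses every $\mathbf{q} \in \mathcal{S}$ as $\a + \sum_{i=1}^d n_i \a_i$ with $\a \in \Ap(\mathcal{S},E)$ and $n_i \in \mathbb{N}$, so $B$ spans $\Bbbk[\mathcal{S}]$ over $A$.

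Finally I would show that $\Bbbk[\mathcal{S}]$ is $A$-free on $B$ precisely when the Apéry condition holds. Any $A$-linear relation among the elements of $B$ decomposes along the $\mathcal{S}$-grading into coincidences $\a + \sum_i n_i \a_i = \b + \sum_i m_i \a_i$ with $\a,\b \in \Ap(\mathcal{S},E)$ and $n_i, m_i \in \mathbb{N}$, and a non-trivial such coincidence occurs exactly when some distinct pair $\a,\b \in \Ap(\mathcal{S},E)$ satisfies $\b-\a = \sum_i (n_i - m_i)\a_i \in \sum_{i=1}^d \mathbb{Z}\a_i$; conversely, any expression $\b - \a = \sum_i c_i \a_i$ in $\sum_{i=1}^d \mathbb{Z}\a_i$ with $\a \neq \b$ splits into non-negative parts $c_i = n_i - m_i$ and yields such a relation. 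The main subtlety will be precisely this multigraded bookkeeping: verifying that the $\mathcal{S}$-grading resolves every candidate $A$-dependence into clean degree-by-degree coincidences, so that non-freeness is governed by nothing more elaborate than pairs of Apéry elements congruent modulo $\sum_{i=1}^d \mathbb{Z}\a_i$.
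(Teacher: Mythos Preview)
The paper does not supply a proof of this proposition; it is simply quoted from \cite{R-GS98}. So there is nothing to compare against, and the relevant question is whether your outline stands on its own. It does: the route via the Noether normalization $A=\Bbbk[\mathbf{t}^{\a_1},\ldots,\mathbf{t}^{\a_d}]$, the equivalence of Cohen--Macaulayness with $A$-freeness (Auslander--Buchsbaum over the regular graded ring $A$), and the identification of $A$-linear independence of $B=\{\mathbf{t}^\a:\a\in\Ap(\mathcal{S},E)\}$ with the Ap\'ery condition is exactly the standard argument, and your degree-by-degree analysis of relations is correct.

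One step is elided and deserves a sentence when you write it out. You pass from ``$\Bbbk[\mathcal{S}]$ is $A$-free'' to ``$\Bbbk[\mathcal{S}]$ is $A$-free \emph{on $B$}'' without comment, but a priori freeness only says \emph{some} basis exists. The fix is that $B$ is a \emph{minimal} $A$-generating set: the quotient $\Bbbk[\mathcal{S}]/\fm_A\Bbbk[\mathcal{S}]$, where $\fm_A=(\mathbf{t}^{\a_1},\ldots,\mathbf{t}^{\a_d})$, has $\Bbbk$-basis precisely the images of $\mathbf{t}^\a$ for $\a\in\Ap(\mathcal{S},E)$, since these are exactly the monomials not killed by any $\mathbf{t}^{\a_i}$. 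Graded Nakayama then forces any free presentation to have $B$ as a basis. With that inserted, the argument is complete; the ``multigraded bookkeeping'' you flag at the end is in fact routine, because the $\mathbb{Q}$-linear independence of $\a_1,\ldots,\a_d$ makes each monomial identity $\a+\sum n_i\a_i=\b+\sum m_i\a_i$ determine $(\a,(n_i))$ uniquely once the Ap\'ery condition holds.
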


Let us show other connections of Ap\'ery sets with the depth of the semigroup ring that are valid beyond extreme cases of depth.

\begin{proposition}\label{prop:rs1}
Let $e \geq 3$ and $i \neq j$, the monomial $x_j$ is a zero-divisor of $\Bbbk[\mathbf x]/(I_{\mathcal A}+\langle x_i \rangle)$ if and only if there exists  $\b \in \Ap(\mathcal{S}, \a_i)$ such that 
$\a_j + \b \not\in \Ap(\mathcal{S}, \a_i)$. In this case, $\depth(\Bbbk[\mathcal{S}]) > 1$.
\end{proposition}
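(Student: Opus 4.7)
The plan is to identify $R := \Bbbk[\mathbf{x}]/(I_{\mathcal{A}}+\langle x_i\rangle)$ with $\Bbbk[\mathcal{S}]/\mathbf{t}^{\a_i}\Bbbk[\mathcal{S}]$ and to apply Proposition~\ref{prop:Ap} with $\delta=\{i\}$: the residue classes $\{\overline{\mathbf{t}^{\b}} : \b\in\Ap(\mathcal{S},\a_i)\}$ form a $\Bbbk$-basis of $R$, which reduces both parts of the statement to computing the $\Bbbk$-linear endomorphism given by multiplication by $x_j$ in this basis.

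For the equivalence, I would observe that $x_j\cdot\overline{\mathbf{t}^{\b}}=\overline{\mathbf{t}^{\a_j+\b}}$ is a basis element when $\a_j+\b\in\Ap(\mathcal{S},\a_i)$, and vanishes otherwise because then $\a_j+\b-\a_i\in\mathcal{S}$, so $\mathbf{t}^{\a_j+\b}$ lies in $\mathbf{t}^{\a_i}\Bbbk[\mathcal{S}]$. Distinct $\b$'s have distinct translates $\a_j+\b$, hence this endomorphism is a partial permutation of the basis: its kernel is spanned precisely by those $\overline{\mathbf{t}^{\b}}$ with $\a_j+\b\notin\Ap(\mathcal{S},\a_i)$, and so $x_j$ is a zero-divisor on $R$ if and only if at least one such $\b$ exists.

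For the depth assertion, the natural step is to use that $\mathbf{t}^{\a_i}$ is already a non-zero-divisor on the domain $\Bbbk[\mathcal{S}]$, so that $\mathbf{t}^{\a_i},\mathbf{t}^{\a_j}$ is a regular sequence in $\mathfrak{m}$ as soon as $\mathbf{t}^{\a_j}$ is a non-zero-divisor on $R$, yielding $\depth(\Bbbk[\mathcal{S}])\geq 2$. The main obstacle is the direction of this final implication: the regular-sequence route produces $\depth(\Bbbk[\mathcal{S}])>1$ exactly in the regime \emph{complementary} to the stated conditions, so the proposition's last sentence is most naturally read as the corresponding contrapositive; in the zero-divisor case itself the equivalence precisely precludes $\mathbf{t}^{\a_j}$ from being the second element of a length-$2$ regular sequence starting with $\mathbf{t}^{\a_i}$, and I do not see how a witness $\b\in\Ap(\mathcal{S},\a_i)$ with $\a_j+\b\notin\Ap(\mathcal{S},\a_i)$ would combinatorially provide an alternative second element. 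In the zero-divisor regime the statement thus appears to require a more delicate argument than the monomial-by-monomial computation above, and I would have to leave that final step for a more careful inspection of the intended reading.
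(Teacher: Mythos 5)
Your proof of the equivalence is correct and is, in substance, the paper's own proof: the paper quotes \cite[Proposition~1.10]{ES96} to reduce the zero-divisor test to monomial witnesses and then translates membership in $I_{\mathcal A}+\langle x_i\rangle$ into the Ap\'ery condition, whereas you obtain the same reduction directly from the $\mathcal{S}$-grading, observing that the classes of $\mathbf{t}^{\b}$ with $\b\in\Ap(\mathcal{S},\a_i)$ form a $\Bbbk$-basis of $\Bbbk[\x]/(I_{\mathcal A}+\langle x_i\rangle)\cong\Bbbk[\mathcal{S}]/\mathbf{t}^{\a_i}\Bbbk[\mathcal{S}]$ on which multiplication by $x_j$ acts as a partial permutation. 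That is a perfectly good self-contained substitute for the citation; nothing is missing in that part.

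Your unease about the final sentence is justified, and it is not a gap on your side: the paper's own proof ends immediately after the equivalence and never addresses that sentence at all. Read literally (``if $x_j$ is a zero-divisor modulo $I_{\mathcal A}+\langle x_i\rangle$, then $\depth(\Bbbk[\mathcal{S}])>1$'') the sentence is false. Structurally: if $\depth(\Bbbk[\mathcal{S}])=1$, Proposition~\ref{prop:depth1} provides a maximal element $\b$ of $\Ap(\mathcal{S},\a_i)$, and this very $\b$ satisfies $\b+\a_j\notin\Ap(\mathcal{S},\a_i)$ for every $j\neq i$; so the zero-divisor condition always holds in depth one, and the literal implication would outlaw depth one altogether. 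Concretely, for $\mathcal{S}=\langle(4,0),(0,4),(3,1),(1,3)\rangle\subset\NN^2$, i.e.\ $\Bbbk[\mathcal{S}]=\Bbbk[s^4,t^4,s^3t,st^3]$, the standard non--Cohen--Macaulay simplicial example with $e=4$ and depth one, one has $\a_4=(1,3)\in\Ap(\mathcal{S},\a_1)$ while $\a_4+\a_3=(4,4)=\a_1+\a_2\notin\Ap(\mathcal{S},\a_1)$, so $x_3$ is a zero-divisor modulo $I_{\mathcal A}+\langle x_1\rangle$ even though $\depth(\Bbbk[\mathcal{S}])=1$. The only tenable reading is the complementary one you propose: when no such $\b$ exists, $x_j$ is a nonzero-divisor on $\Bbbk[\x]/(I_{\mathcal A}+\langle x_i\rangle)$, hence $(x_i,x_j)$ is a regular sequence inside the irrelevant maximal ideal (since $x_i$ is a nonzero-divisor on the domain $\Bbbk[\x]/I_{\mathcal A}$), and $\depth(\Bbbk[\mathcal{S}])\geq 2$. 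This is also the only form in which the proposition is used later (see the proof of Corollary~\ref{cor:rs1}); note too that the remark following the proposition has its own negation flipped, for under a regular sequence one gets $\a_j+\b\in\Ap(\mathcal{S},\a_i)$ for every $\b\in\Ap(\mathcal{S},\a_i)$, which is what actually excludes maximal elements. So you should not search for an argument in the zero-divisor regime; none exists.
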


\begin{proof}
By \cite[Proposition 1.10]{ES96}, the indeterminate $x_j$ is a zero-divisor of $\Bbbk[\mathbf x]/(I_{\mathcal A}+\langle x_i \rangle)$ if and only if there exists $\mathbf{x}^\mathbf{u} \not\in I_{\mathcal A}+\langle x_i \rangle$ such that $x_j \mathbf{x}^\mathbf{u} \in I_{\mathcal A}+\langle x_i \rangle$. Clearly,  $\mathbf{x}^\mathbf{u} \not\in I_{\mathcal A}+\langle x_i \rangle$ if and only if $\b = \sum_{k=1}^e u_k \a_k \in \Ap(\mathcal{S},\a_i)$. Moreover, $\b + \a_j \not\in \Ap(\mathcal{S},\a_i)$ if and only if there exists $\bb' \in \mathcal{S}$ such that $\b + \a_j = \b' + \a_i$. Equivalently, $x_j \x^\u \in I_{\mathcal A} + \langle x_i \rangle$; that is, $x_j$ is a zero-divisor of $\Bbbk[\mathbf x]/(I_{\mathcal A}+\langle x_i \rangle)$ and we are done.
\end{proof}

Observe that if $(x_i,x_j)$ is a regular sequence on $\Bbbk[\x]/I_{\mathcal A}$ then $\a_j + \b \not\in \Ap(\mathcal{S}, \a_i)$ for every $\b \in \Ap(\mathcal{S}, \a_i)$; in particular, $\Ap(\mathcal{S}, \a_i)$ does not have a maximal element with respect to $\preceq_\mathcal{S}$, as expected by Proposition \ref{prop:depth1}.

\begin{corollary}\label{cor:rs1}
Let $d \geq 2$ and $1 \leq i < j \leq d$. The following statements are equivalent.
\begin{enumerate}
\item $(x_i,x_j)$ is a regular sequence on $\Bbbk[\x]/I_{\mathcal A}$.
\item For $\b_1,\b_2\in\Ap(\mathcal{S},\a_i)\cap\Ap(\mathcal{S},\a_j)$, if $\b_1 - \b_2 \in \mathbb{Z}\a_i + \mathbb{Z}\a_j$, then $\b_1 = \b_2$.
\end{enumerate}
\end{corollary}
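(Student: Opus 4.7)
The plan is to reduce the question to a single multiplication map. Since $\mathcal{S}$ is fully embedded in $\NN^d$, the ring $\Bbbk[\mathcal{S}] \cong \Bbbk[\mathbf x]/I_{\mathcal A}$ is an integral domain, so $x_i$ is automatically a nonzero-divisor modulo $I_{\mathcal A}$; hence $(x_i,x_j)$ is a regular sequence if and only if $x_j$ is a nonzero-divisor on $\Bbbk[\mathbf x]/(I_{\mathcal A}+\langle x_i\rangle)$. By Proposition~\ref{prop:rs1}, this last condition is equivalent to requiring that $\b+\a_j \in \Ap(\mathcal{S},\a_i)$ for every $\b \in \Ap(\mathcal{S},\a_i)$, and I will show this reformulation is equivalent to~(2). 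Throughout I write $t^{\a}$ for the image in $\Bbbk[\mathcal{S}]$ of any monomial $\mathbf x^\mathbf u$ with $\sum_k u_k \a_k = \a$.

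For (1)$\Rightarrow$(2), take distinct $\b_1,\b_2 \in \Ap(\mathcal{S},\a_i) \cap \Ap(\mathcal{S},\a_j)$ with $\b_1-\b_2 = p\a_i + q\a_j$, $p,q \in \ZZ$. A short sign analysis rules out $p$ and $q$ sharing a sign: if, say, $p>0$ and $q \geq 0$, then $\b_1-\a_i = \b_2+(p-1)\a_i+q\a_j \in \mathcal{S}$, contradicting $\b_1 \in \Ap(\mathcal{S},\a_i)$. Swapping $\b_1 \leftrightarrow \b_2$ if necessary, I may assume $p>0$ and $q<0$. The identity $\b_1+(-q)\a_j=\b_2+p\a_i$ in $\mathcal{S}$ lifts to $t^{\b_1}(t^{\a_j})^{-q}=(t^{\a_i})^{p}\,t^{\b_2}$ in $\Bbbk[\mathcal{S}]$, so the class of $t^{\b_1}$ in $\Bbbk[\mathcal{S}]/(t^{\a_i})$, which is nonzero because $\b_1 \in \Ap(\mathcal{S},\a_i)$, is annihilated by $(t^{\a_j})^{-q}$. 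Since powers of a nonzero-divisor are nonzero-divisors, $t^{\a_j}$ is already a zero-divisor on $\Bbbk[\mathcal{S}]/(t^{\a_i})$, contradicting~(1).

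For (2)$\Rightarrow$(1), I argue contrapositively: assume some $\b \in \Ap(\mathcal{S},\a_i)$ satisfies $\b+\a_j=\a_i+\c$ for some $\c \in \mathcal{S}$. Since $\c-\a_j=\b-\a_i \notin \mathcal{S}$, already $\c \in \Ap(\mathcal{S},\a_j)$. Now I \emph{slide} both elements into the intersection: set $\tilde\b := \b - k\a_j$ with $k \geq 0$ maximal such that $\tilde\b \in \mathcal{S}$, and $\tilde\c := \c - l\a_i$ with $l \geq 0$ maximal. Then $\tilde\b \in \Ap(\mathcal{S},\a_j)$ by construction, and $\tilde\b \in \Ap(\mathcal{S},\a_i)$ because $\tilde\b-\a_i \in \mathcal{S}$ would propagate to $\b-\a_i \in \mathcal{S}$; symmetrically, $\tilde\c \in \Ap(\mathcal{S},\a_i) \cap \Ap(\mathcal{S},\a_j)$. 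The relation $\b-\c=\a_i-\a_j$ then gives $\tilde\b-\tilde\c=(1+l)\a_i-(1+k)\a_j \in \ZZ\a_i+\ZZ\a_j$, and the $\QQ$-linear independence of the extremal rays $\a_i,\a_j$ forces this to be nonzero, so $\tilde\b \neq \tilde\c$ violates~(2). The hardest step here is precisely this slide: Proposition~\ref{prop:rs1} only provides a witness in the larger set $\Ap(\mathcal{S},\a_i)$, whereas condition~(2) demands a pair of distinct elements in the smaller intersection, and it is the linear independence coming from $\a_i,\a_j$ being extremal rays that certifies the manufactured pair is genuinely distinct.
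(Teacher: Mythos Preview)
Your proof is correct and follows essentially the same route as the paper's: a sign analysis on the coefficients for (1)$\Rightarrow$(2), and for (2)$\Rightarrow$(1) the same ``slide'' of $\b$ and $\c$ into $\Ap(\mathcal{S},\a_i)\cap\Ap(\mathcal{S},\a_j)$ followed by the appeal to $\QQ$-linear independence of $\a_i,\a_j$. The only cosmetic differences are that in (1)$\Rightarrow$(2) you argue directly with the monomial identity in $\Bbbk[\mathcal{S}]/(t^{\a_i})$ and the fact that powers of a nonzero-divisor are nonzero-divisors, whereas the paper invokes Proposition~\ref{prop:rs1} to get closure of $\Ap(\mathcal{S},\a_j)$ under addition of $\a_i$; and in (2)$\Rightarrow$(1) your preliminary observation that $\c\in\Ap(\mathcal{S},\a_j)$ lets you slide $\c$ in only one direction, which is marginally tidier than the paper's two-parameter slide.
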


\begin{proof}
Suppose that $(x_i,x_j)$ is a regular sequence on $\Bbbk[\x]/I_{\mathcal A}$ and let $\b_1, \b_2\in\Ap(\mathcal{S},\a_i)\cap\Ap(\mathcal{S},\a_j)$ such that $\b_1 = \b_2 + z_1 \a_i + z_2\a_j,$ for some $z_1, z_2 \in \mathbb{Z}$. Clearly, $z_1 z_2 \leq 0$; so, without loss of generality, suppose $z_1 \leq 0$ and $z_2 \geq 0$, so that $\b_1 + (-z_1) \a_i = \b_2 + z_2 \a_j$. Now, since, by Proposition \ref{prop:rs1}, $\b_1 + u \a_i \in \Ap(\mathcal{S}, \a_j)$ and $\b_2 + v \a_j \in \Ap(\mathcal{S}, \a_i)$ for every $u, v \in \mathbb N$, we conclude that $z_1 = z_2 = 0$.

Conversely, suppose that (2) holds and let us see that $(x_i,x_j)$ is a regular sequence on $\Bbbk[\x]/I_{\mathcal A}$. Since $x_i$ is a nonzero-divisor of $\Bbbk[\x]/I_{\mathcal A}$, it suffices to prove that $x_j$ is is a nonzero-divisor of $\Bbbk[\x]/(I_{\mathcal A} + \langle x_i \rangle)$. Let $\b \in \Ap(\mathcal{S},\a_i)$, if $\b + \a_j \not\in \Ap(\mathcal{S},\a_i)$, then there exists $\b' \in \mathcal{S}$ such that $\b + \a_j = \b' + \a_i$. Let $u, v, w \in \mathbb N$ be the smallest non-negative integer such that $\c = \b - u \a_j \in \Ap(\mathcal{S},\a_j)$ and $\c' = \b' - v \a_i - w \a_j \in \Ap(\mathcal{S},\a_i) \cap \Ap(\mathcal{S},\a_j)$. Clearly, $\c \in \Ap(\mathcal{S},\a_i)\cap\Ap(\mathcal{S},\a_j)$ and $\c - \c' = (v+1) \a_i + (w - (u+1)) \a_j$. So, by hypothesis, $\c = \c'$ and consequently $ (v+1) \a_i = ((u+1)-w) \a_j$ which is not possible because $\a_i, \a_j \in E$ and elements of $E$ are supposed to be $\mathbb{Q}-$linearly independent.
\end{proof}

Notice that for $d=2$ the above result is nothing but Proposition \ref{prop:1.6}.

We end this section with a characterization of the existence of a maximal element in certain Ap\'ery sets that will be very useful later on.

\begin{proposition}\label{max-2}
Let $E' \subset E$.	The following statements are equivalent.
\begin{enumerate}
	\item $\Ap(\mathcal{S},E')$ has a maximal element with respect to $\preceq_{\mathcal{S}}$.
    \item There exists $\b\in \Ap(\mathcal{S},E')$ such that $\b+\a_i \notin\Ap(\mathcal{S},E')$ for every $i \in E\setminus E'$.
\end{enumerate}
\end{proposition}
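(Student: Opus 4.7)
For $(1) \Rightarrow (2)$ the implication is essentially immediate: given any maximal element $\b$ of $\Ap(\mathcal{S}, E')$ with respect to $\preceq_\mathcal{S}$, every $\a_i \in E \setminus E'$ satisfies $\b \prec_\mathcal{S} \b + \a_i$ (because $\a_i \in \mathcal{S} \setminus \{\mathbf{0}\}$), and maximality forces $\b + \a_i \notin \Ap(\mathcal{S}, E')$. The same $\b$ then witnesses $(2)$.

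For the converse $(2) \Rightarrow (1)$, I would fix $\b \in \Ap(\mathcal{S}, E')$ satisfying the hypothesis and work with
\[
X \; := \; \{\, \c \in \Ap(\mathcal{S}, E') \,:\, \b \preceq_\mathcal{S} \c\,\}.
\]
The strategy is to prove that $X$ is finite, pick any $\preceq_\mathcal{S}$-maximal element $\c^*$ of $X$, and observe that $\c^*$ is then automatically maximal in all of $\Ap(\mathcal{S}, E')$: any $\c' \in \Ap(\mathcal{S}, E')$ with $\c' \succ_\mathcal{S} \c^*$ would also satisfy $\c' \succeq_\mathcal{S} \b$, hence land in $X$, contradicting the choice of $\c^*$.

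The core step is establishing the inclusion $X \subseteq \b + \Ap(\mathcal{S}, E)$, which yields finiteness because $\Ap(\mathcal{S}, E)$ is finite by \cite[Theorem 2.6]{Barroso-2021}, recalled earlier. Given $\c = \b + \bs \in X$ with $\bs \in \mathcal{S}$, assume toward a contradiction that $\a_k \preceq_\mathcal{S} \bs$ for some $\a_k \in E$. Then $\b + \a_k \preceq_\mathcal{S} \c$, so Notation \ref{not:po} puts $\b + \a_k$ in $\Ap(\mathcal{S}, E')$. If $\a_k \in E'$ this is absurd because it would require $(\b + \a_k) - \a_k = \b \notin \mathcal{S}$, while $\b \in \mathcal{S}$; if $\a_k \in E \setminus E'$ it directly contradicts hypothesis $(2)$. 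Hence $\bs \in \Ap(\mathcal{S}, E)$, as required.

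The main obstacle is precisely this finiteness argument: one must notice that condition $(2)$, in combination with the $\Ap(\mathcal{S}, E')$-membership of $\b$, simultaneously rules out every extremal $\a_k \in E$ as a $\preceq_\mathcal{S}$-predecessor of the increment $\c - \b$, thereby trapping that increment inside the finite Ap\'ery set over the full set $E$ of extremal rays. Once this is in hand, passing from a maximal element of the finite set $X$ to a maximal element of $\Ap(\mathcal{S}, E')$ is a one-line transitivity argument.
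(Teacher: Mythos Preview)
Your proof is correct and rests on the same key observation as the paper's: if $\b$ satisfies (2) and $\b+\bs\in\Ap(\mathcal{S},E')$, then $\bs\in\Ap(\mathcal{S},E)$. The paper phrases this contrapositively (``$\b+\c\notin\Ap(\mathcal{S},E')$ for every $\c\in\mathcal{S}\setminus\Ap(\mathcal{S},E)$'') and then runs an iterative chain argument---repeatedly replacing $\b$ by $\b+\c_1$, $\b+\c_2,\ldots$ with $\c_1\preceq_\mathcal{S}\c_2\preceq_\mathcal{S}\cdots$ inside the finite set $\Ap(\mathcal{S},E)$---to reach a maximal element. Your packaging is tighter: you state the observation directly as the containment $X\subseteq\b+\Ap(\mathcal{S},E)$, deduce finiteness of $X$ in one line, and extract a maximal element of $\Ap(\mathcal{S},E')$ by transitivity. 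The two arguments are logically equivalent, but yours avoids the inductive bookkeeping and makes the role of the finite Ap\'ery set over $E$ more transparent.
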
	

\begin{proof}
The statement (1) clearly implies (2). Conversely, let $\b\in \Ap(\mathcal{S}, E')$ such that $\b+\a\notin\Ap(\mathcal{S}, E')$ for every $\a \in E \setminus E'$. In particular, $\b+\c\notin\Ap(\mathcal{S}, E')$ for every $\c \in \mathcal S \setminus \Ap(\mathcal{S}, E)$. Indeed, if $\c \in \mathcal S \setminus \Ap(\mathcal{S}, E)$, then $\c-\a \in \mathcal{S}$ for some $\a \in E$, that is, $\c = \a + \c'$ for some $\a \in E$ and $\c' \in \mathcal{S}$. Now, on the one hand, if $\a \in E \setminus E'$, then $\b + \c = \b + \a + \c' \not\in \Ap(\mathcal S, E')$, otherwise, $\b+\a \in \Ap(\mathcal{S}, E')$ by Corollary \ref{cor:Ap2}; and, on the other hand, if $\a \in E'$, then $\b + \c - \a = \b + \c' \in \mathcal S$ and, consequently, $\b+\c \not\in \Ap(\mathcal{S}, E')$. 

So, if $\b+\c\notin\Ap(\mathcal{S}, E')$ for all $\c\in\Ap(\mathcal{S},E)$, we are done. Otherwise, $\b+\c_1\in\Ap(\mathcal{S},E')$ for some $\c_1\in\Ap(\mathcal{S},E)$. Since $\b+\c_1+\a\notin\Ap(\mathcal{S},E')$ for every $\a \in E \setminus E'$, we may repeat the same argument with $\b+\c_1$ instead of $\b$. So either $\b+\c_1$ is maximal or there exists $\c_2\in\Ap(\mathcal{S},E)$ such that $\c_1\preceq_\mathcal{S}\c_2$, $\b+\c_2\in\Ap(\mathcal{S},E')$ and $\b+\a+\c_2\notin\Ap(\mathcal{S},E')$ for every $\a \in E \setminus E'$ . Since $\operatorname{pos}(\mathcal A) = \operatorname{pos}(E)$, then $\Ap(\mathcal{S},E)$ is finite (see, e.g., \cite[Theorem 2.6]{Barroso-2021}) and this process necessarily stops. Hence $\Ap(\mathcal{S},E')$ has a maximal element with respect to $\preceq_{\mathcal{S}}$.
\end{proof}

\section{Beti numbers and depth}\label{Sect_Betti}

Let us start by recalling the combinatorial characterization of the Betti numbers of $\Bbbk[\mathcal{S}]$, which will be very useful later on. For $\bb \in \mathcal{S}$ consider the simplicial complex
\[
\Delta_\b=\left\{F\subseteq \mathcal A \ \mid\ \b-\sum_{\a \in F} \a \in \mathcal{S}\right\}.
\]

The following result is  \cite[Theorem~9.2]{Miller-Sturmfels}.

\begin{proposition}\label{9.2_M-S}
The Betti number $\beta_{i+1,\b}$ of $\Bbbk[\mathcal{S}]$ equals the dimension over $\Bbbk$ of the $i-$th reduced homology group $\widetilde{H}_i(\Delta_\b; \Bbbk)$, for every $i \geq 0$ and $\b \in \mathcal{S}$.
\end{proposition}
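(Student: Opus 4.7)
The plan is to compute the Tor modules via the Koszul complex and then identify the resulting chain complex with the augmented simplicial chain complex of $\Delta_\b$. First, recall the standard identification $\beta_{i+1,\b} = \dim_\Bbbk \operatorname{Tor}_{i+1}^{\Bbbk[\mathbf{x}]}(\Bbbk, \Bbbk[\mathcal{S}])_\b$ already used in the paper. Since $x_1,\ldots,x_e$ is a regular sequence in $\Bbbk[\mathbf{x}]$, the Koszul complex $K_\bullet := K_\bullet(x_1,\ldots,x_e; \Bbbk[\mathbf{x}])$ is a finite free $\mathcal{S}$-graded resolution of $\Bbbk$. Explicitly, $K_j = \bigwedge^j \Bbbk[\mathbf{x}]^e$ with basis $\{e_{k_1}\wedge\cdots\wedge e_{k_j} : k_1<\cdots<k_j\}$, where each basis element is assigned $\mathcal{S}$-degree $\a_{k_1}+\cdots+\a_{k_j}$ so that the Koszul differential is $\mathcal{S}$-graded. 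Consequently, $\operatorname{Tor}_{i+1}^{\Bbbk[\mathbf{x}]}(\Bbbk, \Bbbk[\mathcal{S}])_\b$ is the degree-$\b$ part of the homology of $K_\bullet \otimes_{\Bbbk[\mathbf{x}]} \Bbbk[\mathcal{S}]$.

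The second step is to describe this tensor complex in degree $\b$. A basis element $\mathbf{t}^\c \otimes (e_{k_1}\wedge\cdots\wedge e_{k_j})$ of $K_j\otimes_{\Bbbk[\mathbf{x}]}\Bbbk[\mathcal{S}]$ lies in $\mathcal{S}$-degree $\b$ exactly when $\c \in \mathcal{S}$ and $\c + \a_{k_1}+\cdots+\a_{k_j} = \b$; that is, precisely when $F := \{\a_{k_1},\ldots,\a_{k_j}\}$ is a face of $\Delta_\b$ of dimension $j-1$, in which case $\c$ is uniquely determined by $F$. Therefore $(K_j \otimes \Bbbk[\mathcal{S}])_\b$ is canonically isomorphic to the reduced chain space $\widetilde{C}_{j-1}(\Delta_\b;\Bbbk)$, with the empty face in the case $j=0$ producing the augmentation term in degree $-1$. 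Under this identification, the Koszul differential
\[\partial(e_{k_1}\wedge\cdots\wedge e_{k_j}) = \sum_{\ell=1}^{j}(-1)^{\ell-1}x_{k_\ell}\,e_{k_1}\wedge\cdots\wedge\widehat{e_{k_\ell}}\wedge\cdots\wedge e_{k_j}\]
becomes the simplicial boundary map: after tensoring, multiplication by $x_{k_\ell}$ acts on $\Bbbk[\mathcal{S}]$ by $\mathbf{t}^\c \mapsto \mathbf{t}^{\c+\a_{k_\ell}}$, which is exactly the update $F \mapsto F\setminus\{\a_{k_\ell}\}$ dictated by deleting a vertex. Taking homology at position $i+1$ then yields $\widetilde{H}_i(\Delta_\b;\Bbbk)$, establishing the claim.

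The main bookkeeping hurdle is verifying that the alternating signs of the Koszul differential agree with those of the simplicial boundary map, but this is immediate once one fixes a total order on $\mathcal{A}$ (for instance, $\a_1<\cdots<\a_e$) and reads both differentials in the induced basis. A secondary subtlety that will be handled along the way is the treatment of the empty face in $\Delta_\b$, which is what forces the use of reduced rather than ordinary simplicial homology and correctly accounts for trivial cases such as $\b = \mathbf{0}$, where $\Delta_\b = \{\emptyset\}$ gives $\widetilde{H}_{-1}(\Delta_\b;\Bbbk) = \Bbbk$, matching $\beta_{0,\mathbf{0}} = 1$.
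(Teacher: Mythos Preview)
Your argument is correct and is essentially the standard Koszul-complex proof of this fact. Note, however, that the paper does not actually prove this proposition: it simply records it as \cite[Theorem~9.2]{Miller-Sturmfels} and moves on. What you have written is therefore not a comparison target but a self-contained justification, and it matches the argument given in Miller--Sturmfels. The identification of $(K_j\otimes\Bbbk[\mathcal{S}])_\b$ with $\widetilde{C}_{j-1}(\Delta_\b;\Bbbk)$ via the correspondence $F\leftrightarrow \mathbf{t}^{\b-\sum_{\a\in F}\a}\otimes e_F$ is exactly the crux, and your handling of the empty face and of the signs is accurate.
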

Thus, \begin{align*} \depth(\Bbbk[\mathcal{S}]) & = e - \max \{i \mid \beta_{i, \bb} \neq 0,\ \text{for some}\ \b \in \mathcal{S}\}\\ & = e - \max \{i \mid \dim(\widetilde{H}_{i-1}(\Delta_\b; \Bbbk)) \neq 0,\ \text{for some}\ \b \in \mathcal{S}\}.\end{align*}

Consider now the simplicial complex 
\[
T_\b=\left\{F\subseteq E \ \mid\ \b-\sum_{\a \in F} \a \in \mathcal{S}\right\}.
\]

Let $D(j)=\{\b\in \mathcal{S}\ \mid\ \widetilde{H}_j(T_\b)\neq0\}$ and 
\[
C_i=\left\{\b \in \mathcal{S} \ \mid \ \b - \sum_{\a \in F} \a \in D(j),\ \text{for some}\ j\geq-1 \text{ and } F\subseteq \mathcal{A} \setminus E\ \text{with}\ \#F=i-j\right\}.
\]

The following result is a reformulation of \cite[Proposition~3.3]{Campillo-Gimenez} and provides a sufficient condition for $\Bbbk[\mathcal{S}]$ to have a nonzero $(i+1)-$th Betti number in degree $\b$. 

\begin{proposition}\label{Ct}
If $\beta_{i+1,\b} \neq 0$, then $\b \in C_i$.
\end{proposition}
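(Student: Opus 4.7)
The plan is to read off the conclusion from a filtration on the reduced simplicial chain complex of $\Delta_\b$. Proposition~\ref{9.2_M-S} turns the hypothesis $\beta_{i+1,\b}\neq 0$ into $\widetilde{H}_i(\Delta_\b;\Bbbk)\neq 0$, so the task becomes producing $F\subseteq \mathcal A\setminus E$ and $j\geq -1$ with $|F|=i-j$ and $\widetilde{H}_j\bigl(T_{\b-\sum_{\a\in F}\a};\Bbbk\bigr)\neq 0$.

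Setting $B:=\mathcal A\setminus E$, my filtration uses the subcomplexes
\[
\Delta_\b^{(p)} \,:=\, \bigl\{F\in \Delta_\b \,\big|\, |F\cap B|\leq p\bigr\},\qquad p=-1,0,\ldots,e-d,
\]
so that $\emptyset=\Delta_\b^{(-1)}\subseteq \Delta_\b^{(0)}=T_\b\subseteq \cdots \subseteq \Delta_\b^{(e-d)}=\Delta_\b$. Each face of $\Delta_\b$ splits uniquely as $F=G\sqcup H$ with $G\subseteq B$, $H\subseteq E$, and $F\in \Delta_\b$ is equivalent to $\b-\sum_{\a\in G}\a\in \mathcal S$ together with $H\in T_{\b-\sum_{\a\in G}\a}$. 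The main technical step, and the one I expect to require the most care, is identifying the successive quotients: removing an element of $G$ drops the filtration level, so the induced differential on the relative chain complex only acts on the $H$ coordinate, and after tracking the Koszul signs one obtains an isomorphism of chain complexes
\[
C_*\bigl(\Delta_\b^{(p)},\Delta_\b^{(p-1)};\Bbbk\bigr)\;\cong\;\bigoplus_{\substack{G\subseteq B,\ |G|=p\\ \b-\sum_{\a\in G}\a\,\in\, \mathcal S}} \widetilde{C}_{*-p}\bigl(T_{\b-\sum_{\a\in G}\a};\Bbbk\bigr),
\]
which yields $H_k(\Delta_\b^{(p)},\Delta_\b^{(p-1)};\Bbbk)\cong \bigoplus_G \widetilde{H}_{k-p}(T_{\b-\sum_{\a\in G}\a};\Bbbk)$.

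From here a short long-exact-sequence argument finishes things off. Since $\widetilde{H}_i(\Delta_\b^{(-1)};\Bbbk)=0$ and $\widetilde{H}_i(\Delta_\b^{(e-d)};\Bbbk)\neq 0$, there is a smallest $p_0\in\{0,\ldots,e-d\}$ with $\widetilde{H}_i(\Delta_\b^{(p_0)};\Bbbk)\neq 0$; the long exact sequence of the pair $(\Delta_\b^{(p_0)},\Delta_\b^{(p_0-1)})$ together with the minimality of $p_0$ forces $H_i(\Delta_\b^{(p_0)},\Delta_\b^{(p_0-1)};\Bbbk)\neq 0$. The displayed decomposition then produces some $G\subseteq B$ with $|G|=p_0$ and $\widetilde{H}_{i-p_0}(T_{\b-\sum_{\a\in G}\a};\Bbbk)\neq 0$, and setting $F:=G$ and $j:=i-p_0$ (automatically $\geq -1$, since smaller values of $j$ give vanishing reduced homology) exhibits $\b\in C_i$, completing the plan.
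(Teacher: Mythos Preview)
Your argument is correct. The filtration $\Delta_\b^{(p)}=\{F\in\Delta_\b:|F\cap(\mathcal A\setminus E)|\leq p\}$ is well chosen: every face $F=G\sqcup H$ with $G\subseteq\mathcal A\setminus E$ and $H\subseteq E$ indeed lies in $\Delta_\b$ if and only if $G\in\Delta_\b$ (equivalently $\b-\sum_{\a\in G}\a\in\mathcal S$) and $H\in T_{\b-\sum_{\a\in G}\a}$, and in the relative chain complex the boundary terms coming from deleting an element of $G$ vanish. Ordering $\mathcal A$ so that $\mathcal A\setminus E$ precedes $E$ gives the sign $(-1)^p$ needed for your chain isomorphism, and the long-exact-sequence step is clean: from $\widetilde H_i(\Delta_\b^{(p_0-1)};\Bbbk)=0$ and $\widetilde H_i(\Delta_\b^{(p_0)};\Bbbk)\neq 0$ you get a nonzero relative $H_i$, hence a summand $\widetilde H_{i-p_0}(T_{\b-\sum_{\a\in G}\a};\Bbbk)\neq 0$. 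The bound $j=i-p_0\geq -1$ is automatic as you say.

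As for comparison with the paper: the paper does not prove this proposition at all, but records it as a reformulation of \cite[Proposition~3.3]{Campillo-Gimenez}. What you have written is essentially an unpacking of the spectral sequence associated to your filtration (your relative homology groups are precisely the $E^1$ page), carried out by hand via long exact sequences. This is very much in the spirit of the Campillo--Gimenez argument, and has the advantage of making the paper self-contained at this point; the only cost is that the sign bookkeeping in the chain isomorphism, which you flag but do not write out, should be made explicit if you want the proof to be fully detailed.
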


Notice that, if $C_k = \varnothing$, then $\operatorname{pd}_{\Bbbk[\mathbf x]}(\Bbbk[S]) \leq k$ and, consequently, $\operatorname{depth}(\Bbbk[S]) \geq e - k$.  

Let us now characterize the elements in $D(0)$ in terms of Ap\'ery sets.

\begin{lemma}\label{disconnected-d}
Let $\b \in \mathcal{S}$. Then $\b \in D(0)$ if and only if there exists $E' \subset E$ such that $\b \not\in \Ap(\mathcal{S}, E')$ and $\b - \sum_{\a \in E \setminus E'} \a \in \Ap(\mathcal{S}, E')$.
\end{lemma}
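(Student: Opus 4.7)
The plan is to translate both sides of the equivalence into combinatorial statements about the simplicial complex $T_\b$, and then to match them via the notion of a facet. Since $\varnothing\in T_\b$ whenever $\b\in\mathcal{S}$, the reduced homology $\widetilde H_0(T_\b;\Bbbk)$ is nonzero precisely when the $1$-skeleton of $T_\b$ is disconnected, so $\b\in D(0)$ says that $T_\b$ has at least two connected components. On the algebraic side, for any $E'\subseteq E$ and writing $F:=E\setminus E'$ and $V(T_\b):=\{\a\in E:\b-\a\in\mathcal{S}\}$, Corollary~\ref{cor:Ap2} lets me rephrase the two conditions purely combinatorially: $\b\notin\Ap(\mathcal{S},E')$ is equivalent to $V(T_\b)\cap E'\neq\varnothing$, while $\b-\sum_{\a\in E\setminus E'}\a\in\Ap(\mathcal{S},E')$ is equivalent to saying both $F\in T_\b$ and $F\cup\{\a\}\notin T_\b$ for every $\a\in E'$, i.e., $F$ is a facet of $T_\b$. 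So the lemma reduces to the purely combinatorial statement: $T_\b$ is disconnected if and only if $T_\b$ has a facet $F$ and a vertex of $T_\b$ lying outside $F$.

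For the $(\Rightarrow)$ direction, assume $T_\b$ has at least two connected components. Pick any facet $F$ of $T_\b$; it is entirely contained in one connected component. Setting $E':=E\setminus F$, the facet property immediately gives $\b-\sum_{\a\in F}\a\in\Ap(\mathcal{S},E')$, while a vertex taken from a second connected component necessarily sits in $E\setminus F=E'$, so $\b\notin\Ap(\mathcal{S},E')$.

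For the $(\Leftarrow)$ direction, given the two conditions and setting $F:=E\setminus E'$, the plan is to show that the vertex $\a_0\in V(T_\b)\cap E'$ supplied by the first condition lies in a connected component disjoint from $F$, which immediately forces $\widetilde H_0(T_\b)\neq 0$. I would argue by contradiction on a shortest edge-path $\a_0=\bb_0,\bb_1,\dots,\bb_k$ in the $1$-skeleton of $T_\b$ with $\bb_k\in F$: minimality forces $\bb_{k-1}\in E'$, and the edge $\{\bb_{k-1},\bb_k\}\in T_\b$ gives $\b-\bb_{k-1}-\bb_k\in\mathcal{S}$. Combining this with $\b-\sum_{\a\in F}\a\in\mathcal{S}$ and the $\mathbb{Q}$-linear independence of the extremal rays $E$ (which pins down the representations of $\b$ against the rays $\a_i$), the aim is to extract $\b-\sum_{\a\in F}\a-\bb_{k-1}\in\mathcal{S}$, contradicting the facet property $F\cup\{\bb_{k-1}\}\notin T_\b$. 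The main obstacle is executing this last semigroup-theoretic step rigorously: edges of $T_\b$ encode only pairwise information, whereas the facet condition is a statement about augmentations of the full face $F$, so bridging local edge data to a statement about $F\cup\{\bb_{k-1}\}$ is exactly where the simplicial structure of $\mathcal{S}$ must enter the argument.
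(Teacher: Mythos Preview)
Your $(\Rightarrow)$ direction is correct and in fact cleaner than the paper's. The paper sets $E'=E\setminus E_i$ where $E_i$ is the \emph{full vertex set} of a connected component of $T_\b$; this tacitly presumes that $E_i$ is itself a face of $T_\b$, which is not automatic for a simplicial complex. Your choice of an arbitrary facet sidesteps that issue.

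Your $(\Leftarrow)$ direction, however, has a genuine gap, and you have located it precisely. The purely combinatorial target you set yourself --- ``$T_\b$ has a facet $F$ and a vertex outside $F$ $\Longrightarrow$ $T_\b$ is disconnected'' --- is \emph{false} for abstract simplicial complexes: on $\{1,2,3\}$ the complex with maximal faces $\{1,2\}$ and $\{2,3\}$ is connected, yet $\{1,2\}$ is a facet with vertex $3$ outside it. Consequently your shortest-path argument cannot close: from the edge $\{\bb_{k-1},\bb_k\}\in T_\b$ you obtain only $\b-\a_{\bb_{k-1}}-\a_{\bb_k}\in\mathcal S$, and there is no mechanism to combine this with $\b-\sum_{\a\in F}\a\in\mathcal S$ to force $\b-\sum_{\a\in F}\a-\a_{\bb_{k-1}}\in\mathcal S$. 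The $\mathbb Q$-linear independence of $E$ controls only rational coefficients of an element along the rays, not membership in $\mathcal S$, so it contributes nothing to this step.

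The paper does not resolve this: its proof of $(\Leftarrow)$ is the bare assertion ``then $T_\b$ has at least two connected components and we are done.'' Worse, the configuration in the paper's own Figure~\ref{fig:Tb} --- a \emph{connected} $T_\c$ with facet $\{k,l\}$ and vertex $i\notin\{k,l\}$ --- satisfies the right-hand side of the lemma (take $E'=\{\a_i,\a_j\}$, so $E\setminus E'=\{\a_k,\a_l\}$) while $\c\notin D(0)$. So either such $T_\c$ are secretly excluded by the simplicial hypothesis via an argument neither you nor the paper supplies, or the converse as written requires a stronger hypothesis (e.g.\ that $E\setminus E'$ be the vertex set of an entire component, not merely a facet). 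In the places where the paper \emph{uses} the lemma --- Proposition~\ref{prop:beta-w-d}, Lemma~\ref{disconnected}, Proposition~\ref{prop:depth3} --- only the $(\Rightarrow)$ direction and the single-vertex case $|E\setminus E'|=1$ of $(\Leftarrow)$ are invoked, and those are unproblematic.
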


\begin{proof}
Since $D(0) = \{\b \in \mathcal{S}\ \mid\ \widetilde{H}_0(T_\b) \neq 0\}$ and the dimension of $\widetilde{H}_0(T_\b)$ as a $\Bbbk$-vector space is one less than the number of connected components of $T_\b$, we have that $D(0) \neq 0$ precisely when $T_\b$ is not connected. Let $E_1, \ldots, E_k$ be the set of vertices of the connected components of $T_\b$. Then $k \geq 2$ and \[\b = \b_1 + \sum_{\a \in E_1} \a = \cdots = \b_k + \sum_{\a \in E_k} \a,\] with $\b_j \in \Ap(\mathcal{S}, E_i)$ for each $j \neq i$ and $i = 1, \ldots, k$. Thus, taking $E' = E \setminus E_i$ for some $i \in \{1, \ldots, k\}$ we get the direct implication.

Conversely, if there  exists a subset $E' \subset E$ such that $\b \not\in \Ap(\mathcal{S},E')$ and $\b - \sum_{\a \in E \setminus E'} \a \in \Ap(\mathcal{S}, E')$, then $T_\b$ has at least two connected components and we are done.
\end{proof}

The Betti degrees appearing in the leftmost syzygy module of the $\mathcal{S}-$graded minimal free resolution of $\Bbbk[\mathcal{S}]$ (that is, the integers $\beta_{e-\depth(\Bbbk[\mathcal{S}]), \mathbf{b}} \neq 0,$ for some $\mathbf{b} \in S$) are combinatorially  described in the following result.

\begin{proposition}\label{prop:beta-w-d}
Let $q = \depth(\Bbbk[\mathcal{S}])$. If $\beta_{e-q,\b}\neq0$, then $\b - \sum_{\a \in \mathcal A \setminus E} \a \in D(d-q-1)$. Moreover, if $\depth(\Bbbk[\mathcal{S}]) = d-1$, then there exists a subset $E' \subset E$ such that 
\[\b - \sum_{\a \in \mathcal A \setminus E'} \a \in \Ap(\mathcal{S}, E')\quad \text{and}\quad \b -  \sum_{\a \in \mathcal A \setminus E} \a \notin \Ap(\mathcal{S}, E').\]
\end{proposition}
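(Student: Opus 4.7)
The plan is to sharpen Proposition~\ref{Ct} by means of a depth-theoretic vanishing bound for the sets $D(j)$, and then to obtain the moreover part as a direct combinatorial consequence of Lemma~\ref{disconnected-d}.

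First I would establish the auxiliary vanishing $D(j) = \varnothing$ for every $j > d-q-1$. Writing $R = \Bbbk[x_1, \ldots, x_d]$, the Koszul complex on $x_1, \ldots, x_d$ yields (verbatim as in Proposition~\ref{9.2_M-S} but restricted to the extremal variables) the identification $\operatorname{Tor}_k^R(\Bbbk, \Bbbk[\mathcal{S}])_{\b} = \widetilde{H}_{k-1}(T_{\b}; \Bbbk)$ for every $\b \in \mathcal{S}$ and $k \geq 0$. Because $\mathcal{S}$ is simplicial, each $\a_j \in \mathcal{A} \setminus E$ lies in $\operatorname{pos}(E)$, so a rational dependence $n_j \a_j = \sum_{i=1}^d n_{ij}\a_i$ translates into a binomial $x_j^{n_j} - \prod_{i=1}^{d} x_i^{n_{ij}} \in I_{\mathcal{A}}$; consequently $\sqrt{(x_1, \ldots, x_d)\Bbbk[\mathcal{S}]} = \mathfrak{m}$, and the standard invariance of depth under passing to an ideal with the same radical yields $\depth_R(\Bbbk[\mathcal{S}]) = \depth_\mathfrak{m}(\Bbbk[\mathcal{S}]) = q$. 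The Auslander-Buchsbaum formula over $R$ then gives $\operatorname{pd}_R(\Bbbk[\mathcal{S}]) = d - q$, whence $\widetilde{H}_{k}(T_\b) = 0$ for every $\b$ whenever $k > d-q-1$, i.e., $D(k) = \varnothing$ in that range.

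Next I would conclude the first assertion by a pinching argument. Applying Proposition~\ref{Ct} to the hypothesis $\beta_{e-q, \b} \neq 0$ (so with $i = e-q-1$ there), I obtain $F \subseteq \mathcal{A} \setminus E$ and $j \geq -1$ with $\#F + j = e - q - 1$ and $\b - \sum_{\a \in F}\a \in D(j)$. The auxiliary vanishing forces $j \leq d-q-1$, while $F \subseteq \mathcal{A} \setminus E$ forces $\#F \leq e-d$. Combined with $\#F + j = e-q-1$ both inequalities must be equalities, so $F = \mathcal{A} \setminus E$ and $j = d-q-1$; that is, $\b - \sum_{\a \in \mathcal{A}\setminus E}\a \in D(d-q-1)$, which is the first assertion.

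Finally, for the moreover statement I specialize to $q = d-1$, so that $\b' := \b - \sum_{\a \in \mathcal{A}\setminus E}\a \in D(0)$. Then Lemma~\ref{disconnected-d} produces a subset $E' \subset E$ with $\b' \notin \Ap(\mathcal{S}, E')$ and $\b' - \sum_{\a \in E \setminus E'}\a \in \Ap(\mathcal{S}, E')$. Using the disjoint union $\mathcal{A} \setminus E' = (\mathcal{A} \setminus E) \sqcup (E \setminus E')$ these two conditions rewrite respectively as $\b - \sum_{\a \in \mathcal{A} \setminus E}\a \notin \Ap(\mathcal{S}, E')$ and $\b - \sum_{\a \in \mathcal{A} \setminus E'}\a \in \Ap(\mathcal{S}, E')$, which is exactly the required conclusion. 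The only genuine obstacle in the whole plan is the depth invariance $\depth_R(\Bbbk[\mathcal{S}]) = q$ underlying the first paragraph; once that is granted, the rest is a counting argument followed by a direct invocation of Lemma~\ref{disconnected-d}.
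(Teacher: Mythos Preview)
Your proof is correct and follows the same route as the paper's: apply Proposition~\ref{Ct}, pinch $j$ between $d-q-1$ (forced by $\#F \leq e-d$) and the vanishing bound $D(j)=\varnothing$ for $j>d-q-1$, conclude $F=\mathcal{A}\setminus E$, then invoke Lemma~\ref{disconnected-d} for the moreover part. The only difference is that the paper obtains the vanishing by citing \cite[Theorem~4.1]{Campillo-Gimenez}, whereas you reprove it directly via Auslander--Buchsbaum over $\Bbbk[x_1,\ldots,x_d]$---which is essentially the content of that cited theorem, so the two arguments coincide.
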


\begin{proof}
By Proposition \ref{Ct}, $\b=\b'+\sum_{\a \in F} \a$, where $\b' \in D(j)$ for some $j\geq-1$ and $F \subseteq \{d+1, \ldots, e\}$ with $e-q-1-j\#F \leq e-d$; in particular, $d-q-1 \leq j$. Since $\depth(\Bbbk[\mathcal{S}])=q$, by \cite[Theorem~4.1]{Campillo-Gimenez}, $D(j)=\varnothing$ for $j\geq e-q$. Therefore, $j=d-q-1$ and $F= \mathcal A \setminus E$. 

Finally, if $\depth(\Bbbk[\mathcal{S}]) = d-1$, by Lemma~\ref{disconnected-d}, there exists $E' \subset E$ such that $\b' \notin\Ap(\mathcal{S},E')$ and  $\b'- \sum_{\a \in E \setminus E'} \a \in \Ap(\mathcal{S},E')$.
\end{proof}

The following result follows easily from the definition of $D(j)$.

\begin{corollary}\label{cor:beta-w-d}
Let $q = \depth(\Bbbk[\mathcal{S}])$. If $\beta_{e-q,\b}\neq0$, then $\widetilde H_{d-q-1}(T_{\b - \sum_{\a \in \mathcal A \setminus E}}) \neq 0$. In particular, $D(d-q-1) \neq \varnothing$.
\end{corollary}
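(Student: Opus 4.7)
The plan is to derive this as a direct unpacking of Proposition~\ref{prop:beta-w-d} via the definition of the sets $D(j)$. Since the heavy lifting (using Proposition~\ref{Ct} to locate $\b$ in some $C_i$, then invoking \cite[Theorem~4.1]{Campillo-Gimenez} to pin down $j = d-q-1$ and $F = \mathcal{A}\setminus E$) has already been done in the preceding proposition, nothing substantive remains to prove here.

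Concretely, I would set $\b' := \b - \sum_{\a \in \mathcal{A} \setminus E} \a$ and argue in two short steps. First, by Proposition~\ref{prop:beta-w-d}, the hypothesis $\beta_{e-q,\b} \neq 0$ forces $\b' \in D(d-q-1)$; this is exactly the content of that proposition, with the shift by $\sum_{\a \in \mathcal A \setminus E} \a$ built into its conclusion. Second, the very definition of $D(j)$ as $\{\b \in \mathcal{S} \mid \widetilde H_j(T_\b) \neq 0\}$ translates $\b' \in D(d-q-1)$ into $\widetilde H_{d-q-1}(T_{\b'}) \neq 0$, which is the first assertion. The ``in particular'' claim then follows because we have just exhibited an element of $D(d-q-1)$, namely $\b'$, so this set is nonempty.

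There is no real obstacle; the only point to check is that the shift index matches, i.e.\ that $\b - \sum_{\a \in \mathcal A \setminus E} \a$ is indeed in $\mathcal{S}$ so that $T_{\b'}$ makes sense, but this is already guaranteed by Proposition~\ref{prop:beta-w-d}, since membership in $D(d-q-1) \subseteq \mathcal{S}$ presupposes it. Thus the corollary is essentially a two-line observation, and my write-up would consist of nothing more than citing Proposition~\ref{prop:beta-w-d} and recalling the definition of $D(j)$.
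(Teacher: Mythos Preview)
Your proposal is correct and matches the paper's approach exactly: the paper simply remarks that the corollary ``follows easily from the definition of $D(j)$,'' which is precisely what you do by invoking Proposition~\ref{prop:beta-w-d} and unpacking $D(d-q-1) = \{\c \in \mathcal{S} \mid \widetilde H_{d-q-1}(T_\c) \neq 0\}$.
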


As the following example shows, the converse of the above results is not true.

\begin{example}
Let $\mathcal{A} = \{\a_1, \a_2, \a_3, \a_4, \a_5, \a_6\} \subset \mathbb{N}^3$ be such that $\a_i$ is the $i-$th column of the following matrix:
\[
\left(
\begin{array}{cccccc}
5 & 4 & 1 & 8 & 7 & 3 \\
3 & 1 & 5 & 5 & 4 & 4 \\
1 & 7 & 2 & 6 & 5 & 2
\end{array}
\right).
\]
Using Singular \cite{Singular}, one can easily check that $\depth(\Bbbk[\mathcal{S}])=2$ and that $\beta_{e-2} = \beta_4 = 6$. Moreover, one can compute the set $B$ of elements $\b \in \mathcal{S}$, such that $\beta_{4,\b} \neq 0$, namely, 
\begin{align*}
B = \{ & \b_1=(79,80,63), \b_2=(89,87,66), \b_3=(82,72,62),\\ & \b_4=(91,78,69), \b_5=(97,77,72), \b_6=(106,72,80)\}. 
\end{align*}

Let $I_{13} = \langle 
x_1, x_3, x_2x_5x_6^5, x_5^3x_6^5, x_4^3x_5^2, x_2x_4^2x_6^6, x_2^2x_6^{11},
x_4^5x_6, x_6^{16}, x_4^2x_6^{11}, x_5^8, x_2x_5^7x_6^4, x_4^{11} \rangle$
be the initial ideal of $I_{\mathcal A} + \langle x_1, x_3 \rangle$ with respect to the $\mathcal A-$graded reverse lexicographical ordering $\prec$ such that $x_3 \prec x_2 \prec x_1 \prec x_6 \prec x_5 \prec x_4$. %

Observe that $x_4^2x_5^7x_6^4 \not\in I_{13}$ and $x_4^2x_5^7x_6^4 x_i \in I_{13}$ for every $i \in \{1, \ldots, 6\}$, so Corollary \ref{cor:Ap2} implies that $\c = 2 \a_4 + 7 \a_5 + 4 \a_6 = (77,54,55) \in \max_{\preceq \mathcal{S}} \Ap(\mathcal{S}, \{\a_1,\a_3\})$; in particular, $\c + \a_2 \not\in \Ap(\mathcal{S}, \a_1) \cap \Ap(\mathcal{S},\a_3)$. Moreover, using the GAP (\cite{GAP}) package \texttt{numericalsgps} (\cite{numericalsgps}), one can check that $\c+\a_2$ has two factorizations, $(0,0,1,10,0,0)$ and $(0,1,0,2,7,4)$, so $\c+\a_2 \in D(0)$, as expected by Lemma \ref{disconnected}  However, $\b = \c + \a_2 + \sum_{\ell=4}^6 \a_i = (81,55,62) \not\in B$, that is, $\beta_{4,\b} = 0$. 

In spite of this, one can check that there exists $\c_i\in\max_{\preceq_\mathcal{S}}\Ap(\mathcal{S},\{\a_1,\a_2\})$ such that $\b_i=\c_i+ \a_3 + \sum^6_{j=4}\a_j,$ for each $i \in \{1, \ldots,6\}$. Concretely, in this case, $\c_1 = (60,62,48), \c_2 = (70,69,51), \c_3 = (63,54,47), \c_4 = (72,60,54),$ $\c_5 = (78,59,57)$ and $\c_6 = (87,54,65)$.
\end{example}

\section{Depth two in three-dimensional case}\label{Sect q=2 d=3}

Let $d=3$. As before $\mathcal A = \{\a_1, \ldots, \a_e\}$ and now $E = \{\a_1, \a_2, \a_3\}$. In this case, the semigroup ring of $\mathcal{S}$ generated by $\mathcal A, \Bbbk[\mathcal{S}]$, has positive depth lesser than or equal to three. As mentioned in Section \ref{Sect_Ap}, the extreme cases, namely $\depth(\Bbbk[\mathcal{S}])=1$ and $\depth(\Bbbk[\mathcal{S}])=3$, are already characterized in terms of Ap\'ery sets. Thus, in this section, we focus our attention on the case of depth two.

The following is Lemma \ref{disconnected-d} for $d=3$.

\begin{lemma}\label{disconnected}
One has that $\b \in D(0)$ if and only if $\b\notin\Ap(\mathcal{S},\a_i)\cap\Ap(\mathcal{S},\a_j)$ and $\b-\a_k\in\Ap(\mathcal{S},\a_i)\cap\Ap(\mathcal{S},\a_j)$, for some $\{i,j,k\}=\{1,2,3\}$. 
\end{lemma}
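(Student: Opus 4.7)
The plan is to use the characterization $\b \in D(0) \iff T_\b$ is disconnected, which was established in the proof of Lemma~\ref{disconnected-d}, and then translate the disconnectedness of $T_\b$ — a subcomplex of the $2$-simplex on $\{\a_1,\a_2,\a_3\}$ — into the stated Ap\'ery condition by exploiting the simple combinatorics of three-vertex complexes.

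First I would unpack the two Ap\'ery conditions. For $\{i,j,k\}=\{1,2,3\}$, the condition $\b \notin \Ap(\mathcal{S},\a_i) \cap \Ap(\mathcal{S},\a_j)$ says that at least one of $\a_i, \a_j$ is a vertex of $T_\b$ (i.e., $\b - \a_i \in \mathcal{S}$ or $\b - \a_j \in \mathcal{S}$), while $\b - \a_k \in \Ap(\mathcal{S},\a_i) \cap \Ap(\mathcal{S},\a_j)$ says that $\a_k$ is a vertex of $T_\b$ with $\b - \a_k - \a_i, \b - \a_k - \a_j \notin \mathcal{S}$, i.e., $\a_k$ has no edge in $T_\b$ to either $\a_i$ or $\a_j$. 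Since these are the only potential edges at $\a_k$, the two conditions combined are equivalent to: $\a_k$ is an isolated vertex of $T_\b$ and $T_\b$ has at least one more vertex.

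For the ``if'' direction this is now immediate: such a $T_\b$ has at least two connected components, hence $\b \in D(0)$. For the ``only if'' direction, suppose $\b \in D(0)$, so $T_\b$ is disconnected. The key observation is that any connected component of $T_\b$ containing all three vertices would have to contain at least two edges; but any two edges on three vertices share a common vertex and therefore already connect all three of them, forcing the whole $T_\b$ to be connected. Consequently every component of a disconnected $T_\b$ has at most two vertices, and in particular at least one vertex $\a_k$ forms a component by itself. Taking $\{i,j\}=\{1,2,3\}\setminus\{k\}$ then yields the desired conditions.

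The only (minor) obstacle is the combinatorial fact that a disconnected subcomplex of the $2$-simplex on three vertices always admits an isolated vertex. This is a short case check on the possible shapes of $T_\b$ (at most one edge can be present when all three vertices are; if only two vertices are present there is no edge at all between them in a disconnected complex), but it is the step that lets us identify the ``isolated'' vertex with the $\a_k$ appearing in the statement.
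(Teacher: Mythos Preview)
Your argument is correct and is essentially the paper's approach made explicit: the paper offers no separate proof here but simply cites Lemma~\ref{disconnected-d}, whose proof for $d=3$ amounts precisely to your observation that the two Ap\'ery conditions say ``$\a_k$ is an isolated vertex of $T_\b$ and $T_\b$ has another vertex,'' together with the pigeonhole remark that a disconnected simplicial complex on at most three vertices always has an isolated vertex. The only cosmetic point is that your sentence about ``at least two edges'' is unnecessary---once a component contains all three vertices the complex is trivially connected---but the subsequent case split (three vertices versus two) is exactly what is needed.
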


The following is a necessary and sufficient condition for $\Bbbk[\mathcal{S}]$ to have depth two, when $d = 3$. %

\begin{theorem}\label{th:depth2} 
The ring $\Bbbk[\mathcal{S}]$ has depth two if and only if $\Ap(\mathcal{S},\b)$ does not have a maximal element for some (equivalently all) $\b \in \mathcal{S}$, and $\Ap(\mathcal{S},\a_i)\cap\Ap(\mathcal{S},\a_j)$ has a maximal element with respect to $\preceq_{\mathcal{S}}$, for some  $1\leq i<j\leq 3$. 
\end{theorem}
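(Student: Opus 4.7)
The value of $\depth(\Bbbk[\mathcal{S}])$ lies in $\{1,2,3\}$ (it is at least one since $\Bbbk[\mathcal{S}]$ is a domain), and by Proposition~\ref{prop:depth1} the nonexistence of a maximal element of $\Ap(\mathcal{S},\b)$ is precisely the statement that $\depth(\Bbbk[\mathcal{S}]) \neq 1$. Hence the content of the theorem reduces to the following claim: assuming $\depth(\Bbbk[\mathcal{S}]) \geq 2$, we have $\depth(\Bbbk[\mathcal{S}]) = 2$ (i.e.\ $\Bbbk[\mathcal{S}]$ is not Cohen--Macaulay) if and only if $\Ap(\mathcal{S},\a_i) \cap \Ap(\mathcal{S},\a_j)$ has a maximal element for some $1 \leq i < j \leq 3$.

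For the only if implication, the plan is to invoke the Betti-theoretic machinery of Section~\ref{Sect_Betti}. Depth two forces some $\beta_{e-2,\b} \neq 0$; Proposition~\ref{prop:beta-w-d} (applied with $q=2,\ d=3$) then puts $\b':=\b-\sum_{\a \in \mathcal A\setminus E}\a$ in $D(0)$, and Lemma~\ref{disconnected} supplies $\{i,j,k\}=\{1,2,3\}$ with $\b'-\a_k \in \Ap(\mathcal{S},\a_i)\cap\Ap(\mathcal{S},\a_j)$ and $\b' \notin \Ap(\mathcal{S},\a_i)\cap\Ap(\mathcal{S},\a_j)$. Taking $E'=\{\a_i,\a_j\}$, the element $\b''=\b'-\a_k$ satisfies $\b'' \in \Ap(\mathcal{S}, E')$ and $\b''+\a_k \notin \Ap(\mathcal{S}, E')$, which is exactly the witness required by Proposition~\ref{max-2} to conclude that $\Ap(\mathcal{S},E')$ admits a maximal element.

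For the converse, given a maximal element $\c$ of $\Ap(\mathcal{S},\{\a_i,\a_j\})$, Proposition~\ref{max-2} gives $\c+\a_k \notin \Ap(\mathcal{S},\{\a_i,\a_j\})$, so after possibly swapping $i$ and $j$ the element $\d := \c+\a_k-\a_i$ lies in $\mathcal{S}$. A short descent inside $\mathbb N^d$ (each step replaces $\mathbf s \in \mathcal S\setminus \Ap(\mathcal{S},E)$ by $\mathbf s - \a_\ell \in \mathcal S$ for some $\ell$, which must terminate) yields decompositions $\c=\w_1+\sum_{\ell=1}^3 \lambda_\ell\a_\ell$ and $\d=\w_2+\sum_{\ell=1}^3 \mu_\ell\a_\ell$ with $\w_1,\w_2 \in \Ap(\mathcal{S},E)$ and $\lambda_\ell,\mu_\ell \in \mathbb N$. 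Then $\w_1-\w_2=(\a_i-\a_k)+\sum_\ell(\mu_\ell-\lambda_\ell)\a_\ell$ lies in $\mathbb Z\a_1+\mathbb Z\a_2+\mathbb Z\a_3$, so if $\Bbbk[\mathcal{S}]$ were Cohen--Macaulay, Proposition~\ref{prop:1.6} would force $\w_1=\w_2$; the $\mathbb Q$-linear independence of $E$ (inherent to the simplicial hypothesis) would then force $\lambda_i=\mu_i+1 \geq 1$, contradicting $\c \in \Ap(\mathcal{S},\a_i)$. Hence $\Bbbk[\mathcal{S}]$ is not Cohen--Macaulay, and together with the first hypothesis this yields $\depth(\Bbbk[\mathcal{S}])=2$.

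The principal obstacle I expect is in the converse: a maximal element of $\Ap(\mathcal{S},\{\a_i,\a_j\})$ need not itself lie in $\Ap(\mathcal{S},E)$, so one must pass from $\c$ to its ``core'' $\w_1 \in \Ap(\mathcal{S},E)$ via the extremal-ray decomposition and track the coefficients carefully; it is precisely the $\mathbb Q$-linear independence of $E$ that makes the resulting bookkeeping produce a genuine obstruction to Cohen--Macaulayness through Proposition~\ref{prop:1.6}.
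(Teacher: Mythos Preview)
Your forward implication is essentially the paper's: you route through Proposition~\ref{prop:beta-w-d} to land in $D(0)$, whereas the paper cites \cite[Theorem~4.1]{Campillo-Gimenez} directly, but both then apply Lemma~\ref{disconnected} and Proposition~\ref{max-2} in the same way.

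Your converse, however, is a genuinely different argument. The paper takes the maximal element $\c$, observes that $\b=\c+\a_k\notin\Ap(\mathcal{S},\a_i)\cap\Ap(\mathcal{S},\a_j)$, invokes Lemma~\ref{disconnected} to conclude $\b\in D(0)$, and then appeals once more to \cite[Theorem~4.1]{Campillo-Gimenez} to force $\depth(\Bbbk[\mathcal{S}])\leq 2$. You instead bypass the $D(0)$ machinery entirely: from the maximal $\c$ you produce $\d=\c+\a_k-\a_i\in\mathcal{S}$, peel both $\c$ and $\d$ down to representatives $\w_1,\w_2\in\Ap(\mathcal{S},E)$ by subtracting extremal rays, and note that $\w_1-\w_2\in\sum_{\ell}\mathbb{Z}\a_\ell$; if the ring were Cohen--Macaulay, Proposition~\ref{prop:1.6} would give $\w_1=\w_2$, whence $\mathbb{Q}$--linear independence of $E$ forces $\lambda_i\geq 1$, contradicting $\c\in\Ap(\mathcal{S},\a_i)$. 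This is correct and more self-contained, since it replaces the external depth criterion of Campillo--Gim\'enez by the elementary Rosales--Garc\'{\i}a-S\'anchez characterization already recalled in the paper. The trade-off is that the paper's route keeps the argument inside the $T_\b$/$D(j)$ framework that drives Section~\ref{Sect q=2 d=4} and the general conjecture, while your argument, though cleaner for $d=3$, does not visibly generalize to distinguish depth $2$ from depth $3$ when $d\geq 4$.
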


\begin{proof}
If $\depth(\Bbbk[\mathcal{S}])=2$, then,  by Proposition \ref{prop:depth1}, $\Ap(\mathcal{S},\b)$ does not have a maximal element for some (equivalently all) $\b\in \mathcal{S}$. Moreover, by \cite[Theorem~4.1]{Campillo-Gimenez}, there exists $\b \in D(0)$. So, by Lemma~\ref{disconnected}, there exists a permutation $\{i,j,k\}=\{1,2,3\}$ such that $\b\notin\Ap(\mathcal{S},\a_i)\cap\Ap(\mathcal{S},\a_j)$ and $\b-\a_k\in\Ap(\mathcal{S},\a_i)\cap\Ap(\mathcal{S},\a_j)$. %
Now, Proposition~\ref{max-2} implies that 
$\Ap(\mathcal{S},\a_i)\cap\Ap(\mathcal{S},\a_j)$ has a maximal element with respect to $\preceq_{\mathcal{S}}$.

Conversely, if $\c \in\max_{\preceq_\mathcal{S}}\Ap(\mathcal{S},\a_i)\cap\Ap(\mathcal{S},\a_j)$, then $\b = \c +\a_k\notin\Ap(\mathcal{S},\a_i)\cap\Ap(\mathcal{S},\a_j)$. By Lemma~\ref{disconnected}, $\b \in D(0)$. Thus, $\depth(\Bbbk[\mathcal{S}])\leq2$, by \cite[Theorem~4.1]{Campillo-Gimenez}. Since, by Proposition \ref{prop:depth1}, $\depth(\Bbbk[\mathbf x])>1$, we conclude that $\depth(\Bbbk[\mathcal{S}])=2$.
\end{proof}

The above result is not true for every choice $1\leq i<j\leq 3$.

\begin{example}\label{ex:depth1}
Let $\mathcal{A} = \{\a_1, \a_2, \a_3, \a_4, \a_5, \a_6\} \subset \mathbb{N}^3$ be such that $\a_i$ is the $i-$th column of the following matrix:
\[
\left(
\begin{array}{cccccc}
2 & 0 & 0 & 9 & 3 & 7 \\
0 & 2 & 0 & 7 & 9 & 3 \\
0 & 0 & 2 & 3 & 7 & 5
\end{array}
\right).
\]
Using Singular \cite{Singular}, one can easily check that $\depth(\Bbbk[\mathcal{S}])=2$. 
Let  $1\leq i<j\leq 3$, considering Proposition \ref{prop:Ap}, let us denote by $I_{ij}$ the initial ideal of $I_{\mathcal A} + \langle x_i, x_j \rangle$ with respect to the $\mathcal A-$graded reverse lexicographical ordering $\prec$ such that $x_3 \prec x_2 \prec x_1 \prec x_6 \prec x_5 \prec x_4$. In this case, we have 
\[ I_{12} = \langle x_1,x_2,x_3x_4\rangle + \langle x_4,x_5,x_6 \rangle^2,
\quad 
I_{13} = \langle x_1, x_3 \rangle + \langle x_4,x_5,x_6 \rangle^2\]
and \[I_{23} = \langle x_2, x_3, x_1^2x_5 \rangle + \langle x_4,x_5,x_6 \rangle^2.\]

Observe that $x_4 \not\in I_{12}$ and $x_4 x_i \in I_{12}$ for every $i \in \{1, \ldots, 6\}$, so, by Corollary \ref{cor:Ap2}, we obtain  $\a_4 \in \max_{\preceq \mathcal{S}} \Ap(\mathcal{S}, \{\a_1, \a_2\})$. Analogously, $x_1 x_5 \not\in I_{23}$ and $x_1 x_5 x_i \in I_{13}$ for every $i \in \{1, \ldots, 6\}$, implies that $\a_1 + \a_5 = (5,9,7) \in \max_{\preceq \mathcal{S}} \Ap(\mathcal{S}, \{\a_2,\a_3\}$. However, since $x_2$ does not belong to the support of any of the generators of $I_{13}$, by Corollary \ref{cor:Ap2}, $\Ap(\mathcal{S}, \a_1) \cap \Ap(\mathcal{S},\a_3)$ does not have any maximal element.
\end{example}

The following result is Proposition \ref{prop:beta-w-d} for $d=3$.

\begin{proposition}\label{prop:beta-w}
Let $\depth(\Bbbk[\mathcal{S}])=2$. If  $\beta_{e-2,\b}\neq0$, then there exist a permutation $\{i,j,k\}=\{1,2,3\}$ and $\c\in \Ap(\mathcal{S},\a_i)\cap\Ap(\mathcal{S},\a_j)$ such that $\c+\a_k \not\in \Ap(\mathcal{S},\a_i)\cap\Ap(\mathcal{S},\a_j)$ and 
\[
\b=\c+\a_k+\sum_{\ell = 4}^e \a_\ell.
\]
\end{proposition}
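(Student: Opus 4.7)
The plan is to observe that this statement is essentially the specialization of the general Proposition~\ref{prop:beta-w-d} to the case $d=3$, $q=\depth(\Bbbk[\mathcal{S}])=2$, combined with the concrete description of $D(0)$ provided by Lemma~\ref{disconnected}.

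First, I would apply Proposition~\ref{prop:beta-w-d} with $d=3$ and $q=2$. Since here $\mathcal{A}\setminus E = \{\a_4,\ldots,\a_e\}$, the hypothesis $\beta_{e-2,\b}\neq 0$ gives directly that
\[
\b - \sum_{\ell=4}^{e} \a_\ell \;\in\; D(d-q-1) \;=\; D(0).
\]
Moreover, since $d=3$ and $q=2$ satisfy $q = d-1$, the second part of Proposition~\ref{prop:beta-w-d} also applies, though in the $d=3$ setting this information is already fully captured by Lemma~\ref{disconnected}, so we do not need to invoke it separately.

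Next, I would apply Lemma~\ref{disconnected} to the element $\b':=\b-\sum_{\ell=4}^{e}\a_\ell \in D(0)$. This yields a permutation $\{i,j,k\}=\{1,2,3\}$ such that
\[
\b' \notin \Ap(\mathcal{S},\a_i)\cap\Ap(\mathcal{S},\a_j)
\qquad \text{and} \qquad
\b' - \a_k \in \Ap(\mathcal{S},\a_i)\cap\Ap(\mathcal{S},\a_j).
\]
Setting $\c := \b' - \a_k = \b - \a_k - \sum_{\ell=4}^{e}\a_\ell$, we obtain $\c \in \Ap(\mathcal{S},\a_i)\cap\Ap(\mathcal{S},\a_j)$, while $\c+\a_k = \b' \notin \Ap(\mathcal{S},\a_i)\cap\Ap(\mathcal{S},\a_j)$, and manifestly $\b = \c+\a_k+\sum_{\ell=4}^{e}\a_\ell$, which is exactly the desired conclusion.

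There is no real obstacle here; the proof is essentially a two-line assembly of Proposition~\ref{prop:beta-w-d} and Lemma~\ref{disconnected}. The only thing to keep in mind is the indexing convention (that $\mathcal A\setminus E$ consists precisely of $\a_4,\ldots,\a_e$ under our standing assumption on the ordering of the generators), which ensures the sum in the statement is the correct one. This also explains why the proposition is stated separately: it records in a more explicit and usable form, for $d=3$, the general description of the Betti degrees at the end of the minimal free resolution that was obtained in Proposition~\ref{prop:beta-w-d}.
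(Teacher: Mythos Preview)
Your proposal is correct and follows exactly the paper's approach: the paper introduces Proposition~\ref{prop:beta-w} precisely as ``Proposition~\ref{prop:beta-w-d} for $d=3$'' and does not even spell out a proof, since the specialization together with Lemma~\ref{disconnected} yields the statement immediately, just as you explain.
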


The following example shows that the subscripts $i,j$ are not fixed for all Betti degrees in Proposition~\ref{prop:beta-w}, in general.

\begin{example}
Let $\mathcal{A} = \{\a_1, \a_2, \a_3, \a_4, \a_5, \a_6\} \subset \mathbb{N}^3$ be such that $\a_i$ is the $i-$th column of the following matrix:
\[
\left(
\begin{array}{cccccc}
2 & 0 & 0 & 11 & 5 & 9 \\
0 & 2 & 0 & 9 & 9 & 5 \\
0 & 0 & 2 & 5 & 9 & 11
\end{array}
\right).
\]
Using Singular \cite{Singular}, one can easily check that $\depth(\Bbbk[\mathcal{S}])=2$ and $\beta_4 = 2$. In this case, $\beta_{4,\b} \neq 0$ if and only if $\b \in \{\b_1 = (34,32,36), \b_2 = (36,32,34)\}$. Let $\c_1 = \b_1 -\a_2 - \sum_{\ell = 4}^6 \a_\ell = \a_2 + \a_6 = (9,7,11)$ and $\c_2 = \b_2 -\a_1 - \sum_{\ell = 4}^6 \a_\ell = 2\a_1 + \a_5 = (9,9,9)$. Observe that $\c_1 \in \Ap(\mathcal{S}, \a_i) \cap \Ap(\mathcal{S}, \a_j)$ if and only if $\{i,j\} = \{1,3\}$ and that $\c_2 \in \Ap(\mathcal{S}, \a_i) \cap \Ap(\mathcal{S}, \a_j)$ if and only if $\{i,j\} = \{2,3\}$.

Observe that $\c_1$ and $\c_2$ are maximal elements of $\Ap(\mathcal{S}, \{\a_1,\a_3\})$ and $\Ap(\mathcal{S}, \{\a_2,\a_3\})$, respectively. For this reason, \emph{we wonder if $\c$ in Proposition~\ref{prop:beta-w} can always be selected from a maximal element}.
\end{example}

The last result of this section complements the Corollary \ref{cor:rs1}, in such a way that we can conclude that $(x_i, x_j), (x_i, x_k)$ or $(x_i,x_j-x_k),\ \{i,j,k\} = \{1,2,3\}$, is a regular sequence in $\Bbbk[\mathcal{S}]$ when $d=3$ and $\depth(\Bbbk[\mathcal{S}]) )= 2$.

\begin{proposition}\label{prop:AP-zd}
Let $\depth(\Bbbk[\mathcal{S}]) = 2$ and $\{i,j,k\} = \{1,2,3\}$. If $x_j$ and $x_k$ are zero-divisors of  $\Bbbk[\x]/(I+ \langle x_i \rangle)$, then $x_j-x_k$ is a nonzero-divisor of $\Bbbk[\x]/(I+ \langle x_i \rangle)$.
\end{proposition}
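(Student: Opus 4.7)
The plan is to argue by contradiction, exploiting the $\mathcal{S}$-graded structure. Set $R := \Bbbk[\mathbf{x}]/(I_{\mathcal A} + \langle x_i \rangle) \cong \Bbbk[\mathcal{S}]/\mathbf{t}^{\a_i}\Bbbk[\mathcal{S}]$. First I would note that, because $\mathcal{S}$ is cancellative, $\mathbf{t}^{\a_i}$ is a nonzero-divisor on $\Bbbk[\mathcal{S}]$, so $\depth(R) = \depth(\Bbbk[\mathcal{S}]) - 1 = 1$; equivalently, the graded maximal ideal $\mathfrak{m}_R$ of $R$ is not an associated prime of $R$.

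Next, I will assume, for contradiction, that $x_j - x_k$ is a zero-divisor on $R$, so that $x_j - x_k \in P$ for some $P \in \Ass(R)$. I would then invoke the standard fact that associated primes of a Noetherian ring graded by a torsion-free abelian group are graded (which applies here since $\mathbb{Z}\mathcal A \subseteq \mathbb{Z}^d$), forcing $P$ to be $\mathcal{S}$-graded. Since $x_j$ and $x_k$ are homogeneous of distinct $\mathcal{S}$-degrees $\a_j \neq \a_k$, each of them must lie separately in $P$. Moreover $x_i = 0$ in $R$, so trivially $x_i \in P$ as well. Using $\{i,j,k\} = \{1,2,3\}$, we deduce $(x_1, x_2, x_3)R \subseteq P$.

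The finishing observation is that, since $E = \{\a_1, \a_2, \a_3\}$ generates $\operatorname{pos}(\mathcal A)$, the Ap\'ery set $\Ap(\mathcal{S}, E)$ is finite, and hence $R/(x_1, x_2, x_3) \cong \Bbbk[\mathcal{S}]/(\mathbf{t}^{\a_1}, \mathbf{t}^{\a_2}, \mathbf{t}^{\a_3})$ is a finite-dimensional $\Bbbk$-algebra (with basis indexed by $\Ap(\mathcal{S}, E)$), and in particular Artinian. Every prime of an Artinian ring is maximal, so the image of $P$ there must be maximal; hence $P = \mathfrak{m}_R$, contradicting $\mathfrak{m}_R \notin \Ass(R)$.

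The delicate ingredient is the appeal to the fact that associated primes of an $\mathcal{S}$-graded Noetherian ring are $\mathcal{S}$-graded; once that is granted, the rest is bookkeeping. This is also precisely where the hypothesis $d=3$ enters essentially: only in dimension three do the three elements $x_i, x_j, x_k$ comprise a full system of parameters inside $P$, which is what pins $P$ down to the irrelevant maximal ideal. Consequently the same strategy would not extend verbatim to higher dimensions, which is consistent with the fact that the conjecture at the end of the introduction is posed only for a \emph{pair} of extremal rays, not for a fixed triple.
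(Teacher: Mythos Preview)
Your argument is correct, and it differs meaningfully from the paper's. The paper, after invoking \cite[Proposition~1.10]{ES96} to obtain a single monomial $\mathbf{x}^{\mathbf u}$ annihilated by both $x_j$ and $x_k$, translates this into the statement that $\b = \sum u_\ell \a_\ell \in \Ap(\mathcal S,\a_i)$ satisfies $\b+\a_j,\b+\a_k\notin\Ap(\mathcal S,\a_i)$, and then runs an explicit combinatorial iteration through the finite set $\Ap(\mathcal S,E)$ to manufacture a $\preceq_{\mathcal S}$-maximal element of $\Ap(\mathcal S,\a_i)$, contradicting Proposition~\ref{prop:depth1}. Your route is more structural: you pass directly to an associated prime $P$, use gradedness to force $x_1,x_2,x_3\in P$, and then the finiteness of $\Ap(\mathcal S,E)$ (equivalently, the Artinianness of $R/(x_1,x_2,x_3)$ as a $\Bbbk$-algebra with degree-$0$ part $\Bbbk$, hence local) pins $P$ down to $\mathfrak m_R$, contradicting $\depth(R)=1$. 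Both proofs ultimately rest on the same two inputs---graded associated primes and finiteness of $\Ap(\mathcal S,E)$---but yours packages them more efficiently and, as you observe, never touches the hypothesis that $x_j$ and $x_k$ are individually zero-divisors (the paper's proof does not use it either). What the paper's argument buys is that it stays entirely within the Ap\'ery-set language developed in Section~\ref{Sect_Ap}, mirroring the proof of Proposition~\ref{max-2}; what yours buys is brevity and a transparent explanation of why $d=3$ is essential.
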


\begin{proof}
Assume contrary that $x_j-x_k$ is a zero-divisor of $\Bbbk[\mathbf x]/(I_{\mathcal A}+\langle x_i \rangle)$. Equivalently, since $I_\mathcal{A}$ is a prime ideal, by \cite[Proposition 1.10]{ES96}, there exists $\mathbf{x}^\mathbf{u} \not\in I_{\mathcal A}+\langle x_i \rangle$ such that $x_j \mathbf{x}^\mathbf{u} \in I_{\mathcal A}+\langle x_i \rangle$ and $x_k \mathbf{x}^\mathbf{u} \in I_{\mathcal A}+\langle x_i \rangle$. So, if $\b = \sum_{l=1}^e u_l \a_l \in \Ap(\mathcal{S}, \a_i),$ we have that $\b + \a_j \not\in \Ap(\mathcal{S}, \a_i)$ and $\b + \a_k \not\in \Ap(\mathcal{S}, \a_i)$. Thus, $\b+\c\notin\Ap(\mathcal{S},\a_i)$ for $\c\in \mathcal{S}\setminus\Ap(\mathcal{S},E)$. 

If $\b+\c\notin\Ap(\mathcal{S},\a_i)$ for all $\c\in\Ap(\mathcal{S},E)$, then $\b$ is a a maximal element for $\Ap(\mathcal{S},\a_i)$.  Otherwise, $\b+\c_1\in\Ap(\mathcal{S},\a_i)$ for some $\c_1\in\Ap(\mathcal{S},E)$. Since $\b+\c_1\notin\Ap(\mathcal{S},\a_i)$, we may repeat the same argument with $\b+\c_1$ instead of $\b$. So, either $\b+\c_1$ is maximal or there exists $\c_2\in\Ap(\mathcal{S},E)$ such that $\c_1\preceq_\mathcal{S}\c_2$, $\b+\c_2\in\Ap(\mathcal{S},\a_i)$ and $\b+\c_2\notin\Ap(\mathcal{S},\a_i)$. As $\Ap(\mathcal{S},E)$ is finite, this process stops. Therefore, $\Ap(\mathcal{S},\a_i)$ has maximal, a contradiction by Proposition \ref{prop:rs1}.
\end{proof}

\section{Depth two in four-dimensional case}\label{Sect q=2 d=4}

Let $d =4$ and, according to our notation, $E = \{\a_1,\a_2,\a_3,\a_4\}$. Let us characterize the property that $\Bbbk[\mathcal{S}]$ has depth two in this case. To do this, we resort directly to the Koszul homology techniques on which the combinatorial constructions used in the previous sections are based.  %

We begin by establishing the notation and briefly recalling the notion of Koszul complex.

Let $\overline{\mathbf{t}}$ be the sequence $(\mathbf{t}^{\a_1},\dots,\mathbf{t}^{\a_d})$ of elements of $\Bbbk[\mathcal{S}]$. Let $K_0 = \Bbbk[\mathcal{S}]$ and $K_p=\bigoplus \Bbbk[\mathcal{S}] \e_{i_1\dots i_d}$ be the free $\Bbbk[\mathcal{S}]-$module of rank $\binom{d}{p}$ with basis $\{\e_{i_1\dots i_p} \ ; \ 1\leq i_1<\dots<i_p\leq d\}$ for each $ 1 \leq p \leq d.$ Set
\[
\phi_p:K_p\longrightarrow K_{p-1};\ \e_{i_1\dots i_p} \mapsto \sum^p_{j=1}(-1)^{j-1} \mathbf{t}^{\a_{i_j}} \e_{i_1\dots \widehat{i_j} \dots i_p},\ p = 2, \ldots, d,
\]
and $\phi_1 : K_1=\bigoplus_{i=1}^d \Bbbk[\mathcal{S}] \e_{i}  \to K_0 = \Bbbk[\mathcal{S}]; \e_i \mapsto \mathbf{t}^{\a_i}$. One can check that $\phi_p \circ \phi_{p-1} = 0,$ for every $p \in \{1, \ldots, d\}$. Thus, we have that 
\[
K_\bullet(\overline{\mathbf{t}}) : 0 \to K_{d} \stackrel{\phi_p}{\longrightarrow} K_{d-1} \longrightarrow \cdots \longrightarrow K_1 \stackrel{\phi_1}{\longrightarrow} K_0 \to 0
\]
is a chain complex of $\Bbbk[\mathcal{S}]-$modules. This complex is called the Koszul complex associated to $\overline{t}$. 

The Koszul complex has homology groups 
\[
H_p(K_\bullet(\overline{\mathbf{t}}), \Bbbk[\mathcal{S}]):=\frac{\ker{\phi_p}}{\Im{\phi_{p+1}}},\quad p = 0, \ldots, d,
\] and $H_p(\overline{x}, \Bbbk[\mathcal{S}]) = 0$ for every $p > d$.

The following result is an immediate consequence of \cite[16.8 and 16.6]{Matsumura}.

\begin{proposition}\label{rem-Koszul}
With the above notation, $\depth(\Bbbk[\mathcal{S}]) = d  - \max\{p \mid H_p(K_\bullet(\overline{t}),\Bbbk[\mathcal{S}]) \neq 0\}$.
\end{proposition}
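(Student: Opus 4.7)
The plan is to derive the formula as an essentially immediate consequence of the two cited Matsumura results, after reducing the computation of $\depth(\Bbbk[\mathcal{S}])$ to the depth of $\Bbbk[\mathcal{S}]$ on the ideal $J := \langle \mathbf{t}^{\a_1}, \ldots, \mathbf{t}^{\a_d} \rangle$ generated by the extremal monomials. First I would note that $\Bbbk[\mathcal{S}]$ is Noetherian (it is a finitely generated $\Bbbk$-algebra), that $\mathfrak{m} = \bigoplus_{\b \in \mathcal{S}\setminus\{\mathbf 0\}} \Bbbk\,\mathbf{t}^{\b}$ is its unique maximal graded ideal, and hence that $\depth(\Bbbk[\mathcal{S}]) = \depth(\mathfrak{m}, \Bbbk[\mathcal{S}])$ by definition.

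The key preliminary step is the radical identity $\sqrt{J} = \mathfrak{m}$. Clearly $J \subseteq \mathfrak{m}$. For the reverse inclusion, fix $\ell \in \{1, \ldots, e\}$; since $\a_\ell \in \operatorname{pos}(\mathcal{A}) = \operatorname{pos}(E)$ and $E$ is $\mathbb{Q}$-linearly independent, there exist a positive integer $n_\ell$ and non-negative integers $c_1, \ldots, c_d$ with $n_\ell\,\a_\ell = \sum_{j=1}^d c_j\,\a_j$. Consequently $(\mathbf{t}^{\a_\ell})^{n_\ell} = \prod_{j=1}^d (\mathbf{t}^{\a_j})^{c_j} \in J$, so $\mathbf{t}^{\a_\ell} \in \sqrt{J}$. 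As these generate $\mathfrak{m}$, we obtain $\mathfrak{m} \subseteq \sqrt{J}$.

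With this identity in hand, Matsumura's Theorem 16.6 (the $\operatorname{Ext}$ characterization of depth) implies that $\depth(I, M)$ depends only on $\sqrt{I}$ for a finite module $M$ over a Noetherian ring, so $\depth(\mathfrak{m}, \Bbbk[\mathcal{S}]) = \depth(J, \Bbbk[\mathcal{S}])$. Matsumura's Theorem 16.8, applied to the $d$-element sequence $\overline{\mathbf{t}} = (\mathbf{t}^{\a_1}, \ldots, \mathbf{t}^{\a_d})$ generating $J$, then yields
$$\depth(J, \Bbbk[\mathcal{S}]) = d - \max\{p \mid H_p(K_\bullet(\overline{\mathbf{t}}), \Bbbk[\mathcal{S}]) \neq 0\},$$
which is exactly the claim. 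The only mild obstacle worth noting is that the argument is taking place in a graded rather than strictly local setting: $\Bbbk[\mathcal{S}]$ is $*$-local with $*$-maximal ideal $\mathfrak{m}$. However, the homology modules $H_p(K_\bullet(\overline{\mathbf{t}}), \Bbbk[\mathcal{S}])$ are annihilated by a power of $J$ (and hence supported only at $\mathfrak{m}$), so the invariants computed here coincide with those computed after localising at $\mathfrak{m}$, and Matsumura's statements — valid for arbitrary Noetherian rings with finitely generated modules — apply verbatim.
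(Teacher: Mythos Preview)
Your proposal is correct and follows precisely the route the paper indicates: the paper states the proposition as ``an immediate consequence of [Matsumura, 16.8 and 16.6]'' without further elaboration, and what you have written is exactly the natural unpacking of that citation---show $\sqrt{J}=\mathfrak m$ (so 16.6 gives $\depth(\mathfrak m,\Bbbk[\mathcal S])=\depth(J,\Bbbk[\mathcal S])$), then apply 16.8 to the $d$-element sequence $\overline{\mathbf t}$. Your remark on the graded versus local setting is a reasonable extra care that the paper does not spell out.
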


Before characterizing the case where the depth of $\Bbbk[\mathcal{S}]$ is two for $d =4$, we need to prove a couple of technical lemmas valid for all $d \geq 4$.

\begin{lemma}\label{3i}
Let $d \geq 4$ and $\{i < j < k\} \subseteq \{1, \ldots, d\}$. There exists $\mathbf{f} = f_{ij} \e_{ij} - f_{ik} \e_{ik}+f_{jk} \e_{jk} \in \ker \phi_2 \setminus \Im \phi_3,$ for some $f_{ij}, f_{ik}, f_{jk} \in \Bbbk[\mathcal{S}]$ if and only if for every $i_4 \in \{1, \ldots, d\} \setminus \{i,j,k\}$ there exist a permutation $\{i_1, i_2, i_3\} = \{i,j,k\}$ and $\a \in \Ap(\mathcal{S}, \a_{i_3}) \cap \Ap(\mathcal{S}, \a_{i_4})$ such that $\mathbf{t}^\a$ appears with nonzero coefficient in $f_{i_1 i_2}$ and both $\a+\a_{i_1}-\a_{i_3}$ and $\a+\a_{i_2}-\a_{i_3}$ belong to $\mathcal{S}$. %
\end{lemma}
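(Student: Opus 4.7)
The plan is to reduce to the $\mathcal{S}$-homogeneous setting. Since $K_\bullet(\overline{\mathbf{t}})$ is $\mathcal{S}$-graded, any $\mathbf{f}\in\ker\phi_2\setminus\Im\phi_3$ has an $\mathcal{S}$-homogeneous component still in $\ker\phi_2\setminus\Im\phi_3$, and conversely it suffices to produce such an element in a single degree $\b$. Expanding $\phi_2(\mathbf{f}_\b)=0$ along $\e_i,\e_j,\e_k$ immediately forces $\mathbf{f}_\b$ to be a nonzero scalar multiple of
\[
\mathbf{t}^{\b-\a_i-\a_j}\e_{ij}-\mathbf{t}^{\b-\a_i-\a_k}\e_{ik}+\mathbf{t}^{\b-\a_j-\a_k}\e_{jk},
\]
and requires $\b-\a_p-\a_q\in\mathcal{S}$ for each $\{p,q\}\subset\{i,j,k\}$. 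Setting $\a=\b-\a_{i_1}-\a_{i_2}$, the memberships $\a+\a_{i_1}-\a_{i_3}=\b-\a_{i_2}-\a_{i_3}\in\mathcal{S}$ and $\a+\a_{i_2}-\a_{i_3}=\b-\a_{i_1}-\a_{i_3}\in\mathcal{S}$ then hold automatically, and the content of the lemma reduces to the equivalence between $\mathbf{f}_\b\notin\Im\phi_3$ and the two combinatorial conditions: \emph{(i)} $\b-\a_i-\a_j-\a_k\notin\mathcal{S}$, and \emph{(ii)} for every $i_4\notin\{i,j,k\}$ some $\{i_1,i_2\}\subset\{i,j,k\}$ satisfies $\b-\a_{i_1}-\a_{i_2}-\a_{i_4}\notin\mathcal{S}$.

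For the $(\Rightarrow)$ direction I would argue by contraposition. If (i) fails, then $\mathbf{f}_\b=\phi_3\!\left(\mathbf{t}^{\b-\a_i-\a_j-\a_k}\e_{ijk}\right)\in\Im\phi_3$. If instead (ii) fails for some $i_4$, then all three translates $\b-\a_{i_1}-\a_{i_2}-\a_{i_4}$ (with $\{i_1,i_2\}\subset\{i,j,k\}$) lie in $\mathcal{S}$, and the explicit $2$-chain
\[
\mathbf{g}=\mathbf{t}^{\b-\a_i-\a_j-\a_{i_4}}\e_{\{i,j,i_4\}}-\mathbf{t}^{\b-\a_i-\a_k-\a_{i_4}}\e_{\{i,k,i_4\}}+\mathbf{t}^{\b-\a_j-\a_k-\a_{i_4}}\e_{\{j,k,i_4\}}
\]
(with signs adapted to the ordering of $\{i,j,k,i_4\}$) satisfies $\phi_3(\mathbf{g})=\mathbf{f}_\b$ by a direct tetrahedron-style computation: the six contributions to $\e_{\{i,i_4\}},\e_{\{j,i_4\}},\e_{\{k,i_4\}}$ cancel pairwise, while those on $\e_{ij},\e_{ik},\e_{jk}$ reproduce $\mathbf{f}_\b$. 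Either conclusion contradicts $\mathbf{f}_\b\notin\Im\phi_3$.

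For the $(\Leftarrow)$ direction, I assume (i) and (ii) and suppose for contradiction that $\phi_3(\mathbf{g})=\mathbf{f}_\b$ for some $\mathcal{S}$-homogeneous $\mathbf{g}$. Decomposing $\mathbf{g}$ by the cardinality $r$ of the intersection of the index set of each of its summands with $\{i,j,k\}$, condition (i) kills the $r=3$ part. Summands with $r\leq 1$ contribute only to basis elements $\e_{uv}$ whose index set has at most one element in $\{i,j,k\}$, and hence never affect the triangle components $\e_{ij},\e_{ik},\e_{jk}$ of $\mathbf{f}_\b$. Therefore the triangle coefficients of $\mathbf{f}_\b$ must be produced entirely by the $r=2$ summands, whose 2-faces have the form $\{i_1,i_2,i_4\}$ with $\{i_1,i_2\}\subset\{i,j,k\}$ and $i_4\notin\{i,j,k\}$. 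Isolating a fixed $i_4$ as in (ii), the three triangle-edge equations together with the three vanishing conditions on the $1$-vert edges $\e_{\{i,i_4\}},\e_{\{j,i_4\}},\e_{\{k,i_4\}}$ (modulo the $r=1$ contributions, which decouple from the triangle system) reproduce exactly the tetrahedron identity used in the $(\Rightarrow)$ direction, and are solvable only when all three 2-faces $\{i,j,i_4\},\{i,k,i_4\},\{j,k,i_4\}$ are present, directly contradicting (ii).

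The principal obstacle will be the bookkeeping of the interaction between the $r=1$ summands and the $1$-vert edges when $d\geq 5$, where such summands could in principle redistribute contributions across the edges meeting an outer vertex; I would control this by exploiting the fact that they contribute zero to every triangle edge, so they can be projected out before the tetrahedron identity is applied to a single chosen $i_4$.
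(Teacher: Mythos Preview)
Your reduction to the $\mathcal{S}$-graded setting and the identification of the degree-$\b$ piece of $K_\bullet(\overline{\mathbf t})$ with the (shifted) simplicial chain complex of $T_\b$ is cleaner than the paper's argument, which works by successively peeling off monomials of $f_{jk}$ and subtracting images of $\phi_3$.  Your $(\Rightarrow)$ direction is fine: if (i) fails you have the solid triangle $\{i,j,k\}$, and if (ii) fails for some $i_4$ you have the cone over $i_4$; either fills the hollow triangle.

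The $(\Leftarrow)$ direction, however, has a genuine gap once $d\geq 5$.  The sentence ``Isolating a fixed $i_4$ as in (ii), the three triangle-edge equations \dots\ are solvable only when all three $2$-faces $\{i,j,i_4\},\{i,k,i_4\},\{j,k,i_4\}$ are present'' is where the argument breaks.  The coefficient of $\e_{i_1 i_2}$ in $\phi_3(\mathbf g)$ is a \emph{sum over all} outer vertices $l\notin\{i,j,k\}$ of the contributions from $\e_{\{i_1,i_2,l\}}$; you cannot extract from that sum a system involving a single $i_4$.  Concretely, take $d=5$, triangle $\{1,2,3\}$, and let the $2$-faces of $T_\b$ be
\[
\{1,2,4\},\ \{2,3,4\},\ \{1,3,5\},\ \{1,4,5\},\ \{3,4,5\}.
\]
Then (i) holds ($\{1,2,3\}\notin T_\b$), and (ii) holds: for $l=4$ the face $\{1,3,4\}$ is missing, for $l=5$ the face $\{2,3,5\}$ is missing.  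Yet
\[
\partial\big([124]+[234]-[135]+[145]-[345]\big)=[12]-[13]+[23],
\]
so $\mathbf f_\b\in\Im\phi_3$.  Thus conditions (i) and (ii) alone do not force $\mathbf f_\b\notin\Im\phi_3$ when there is more than one outer vertex, and your final paragraph does not repair this: the $r\le 1$ summands (here $[145]$ and $[345]$) are precisely what couple different outer vertices and make the ``tetrahedron identity'' for a single $i_4$ inapplicable.

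For $d=4$ there is a unique $i_4$, no $r\le 1$ terms exist, and your argument goes through verbatim; since this is the only case the paper actually uses, your proof is adequate there.  It is worth noting that the paper's own converse argument is equally underspecified: it only invokes $\a-\a_i\notin\mathcal S$ (your condition (i)) to conclude $\mathbf f\notin\Im\phi_3$, which is insufficient by the same example.  What does work for general $d$ is the stronger hypothesis that the \emph{same} pair $\{i_1,i_2\}$ satisfies $\b-\a_{i_1}-\a_{i_2}-\a_{i_4}\notin\mathcal S$ for every $i_4$ (equivalently, one fixed $\a$ lies in $\bigcap_{i_4}\Ap(\mathcal S,\a_{i_4})$); then no $2$-face of $T_\b$ contains the edge $\{i_1,i_2\}$ at all, and the coefficient of $\e_{i_1 i_2}$ in any boundary is forced to vanish.
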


\begin{proof}
Let $f_{jk} = \sum_{\a \in \mathcal{S}} \lambda_\a^{jk} \mathbf{t}^\a$. If $f_{jk} = 0$, then \begin{align*}0 = \phi_2(\mathbf f) & = f_{ij} (\mathbf t^{\a_j}\e_i - \mathbf t^{\a_i}\e_j) - f_{ik} (\mathbf t^{\a_k}\e_i - \mathbf t^{\a_i}\e_k)+f_{jk} (\mathbf t^{\a_j}\e_k - \mathbf t^{\a_k}\e_j)\\ & = (f_{ij} \mathbf t^{\a_j} - f_{ik} \mathbf t^{\a_k}) \e_i - (f_{jk} \mathbf t^{\a_k} + f_{ij} \mathbf t^{\a_i}) \e_j + (f_{ik} \mathbf t^{\a_i} + f_{jk} \mathbf t^{\a_j}) \e_k \end{align*} implies, $\mathbf f = 0$, a contradiction. Hence, for each $\a \in \mathcal{S}$ such that $\lambda_\a^{jk} \neq 0$, there exist $\c_\a, \c'_\a \in \mathcal{S}$ such that $\a + \a_j = \c_\a + \a_i$ and $\a + \a_k = \c'_\a + \a_i$; in particular, both $\a + \a_j - \a_i$ and $\a + \a_k - \a_i$ belong to $S$. Now, if $\a \not\in \Ap(\mathcal{S}, \a_i),$ then $\c_\a - \a_j = \c'_\a-\a_k = \a-\a_i \in \mathcal{S}$ and, consequently, \[\mathbf{g}_a := \mathbf{t}^{\c'_\a} \e_{ij} + \mathbf{t}^{\c_\a} \e_{ik} - \mathbf{t}^\a \e_{jk} = \phi_3(\mathbf{t}^{\a-\a_i} \e_{ijk}) \in \Im \phi_3 \subset \ker \phi_2 \] Therefore, if $\a \not\in \Ap(\mathcal{S}, \a_i),$ for every $\a \in \mathcal{S}$ with $\lambda_\a^{jk} \neq 0$, then \[\mathbf{g} : = \mathbf{f} - \sum_{\a \in \mathcal{S}} \lambda_\ba^{jk} \mathbf{g}_\a = \left(f_{ij} - \sum_{\a \in \mathcal{S}} \lambda_\a^{jk} \mathbf{t}^{\c'_\a} \right) \e_{ij} - \left(f_{ik} - \sum_{\a \in \mathcal{S}} \lambda_\a^{jk} \mathbf{t}^{\c_\a} \right) \e_{ik} \in \ker\phi_2.\] However, $\phi_2(\mathbf{g}) = 0$ implies $\mathbf{g}=0,$ that is, $\mathbf{f} = \sum_{\a \in \mathcal{S}} \lambda_\ba^{jk} \mathbf{g}_\a  \in \Im \phi_3$ which is a contradiction. So, there exists $\b \in \Ap(\mathcal{S}, \a_i),$ for some $\b \in \mathcal{S}$ with $\lambda_\b^{jk} \neq 0$.

Let $\mathbf h = \mathbf f - \sum_{\a \not\in \Ap(\mathcal{S},\a_i)} \lambda_\ba^{jk} \mathbf{g}_\a$. By the previous arguments, $\mathbf h = h_{ij} \e_{ij} - h_{ik} \e_{ik}+h_{jk} \e_{jk} \in \ker \phi_2 \setminus \Im \phi_3,$ with %
$h_{jk} = \sum_{\a \in \Ap(\mathcal{S}, \a_i)} \lambda_\a^{jk} \mathbf t^\a \neq 0$. Let $l \in \{1, \ldots, d\} \setminus \{i,j,k\}$. If $\a \not\in \Ap(\mathcal{S}, \a_l)$ for every $\a \in  \Ap(\mathcal{S}, \a_i)$ with $\lambda_{jk} \neq 0$, then $h_{jk} = \mathbf{t}^{\a_l} \tilde h_{jk}$ and both $h_{ij}$ and $h_{ik}$ are divisible by $\mathbf{t}^{\a_l}$, 
then we may replace $\mathbf h$ by $\mathbf h/\mathbf{t}^{\a_l}$. Thus, without loss of generality, we suppose that there exists $\b \in \Ap(\mathcal{S}, \a_i)$ with $\lambda_\b^{jk} \neq 0$ such that, at least, one of $\b, \c_\b$ or $\c'_\b$ belongs to $\mathrm{Ap}(\mathcal{S}, \a_l)$.
So, we distinguish three cases:
\begin{itemize}
    \item If $\b \in \mathrm{Ap}(\mathcal{S}, \a_l)$, then $\b \in \mathrm{Ap}(\mathcal{S}, \a_i) \cap \mathrm{Ap}(\mathcal{S}, \a_l)$. We already know that $\b + \a_k - \a_i$ and $\b + \a_j - \a_i$ belong to $\mathcal{S}$.
    \item If $\c_\b \in \mathrm{Ap}(\mathcal{S}, \a_l),$ then $\c_\b - \a_j = \a - \a_i \not\in \mathcal{S}$. Thus, $\c_\b \in \Ap(\mathcal{S}, \a_j) \cap \Ap(\mathcal{S}, \a_l)$. Moreover, $\c_\b + \a_i - \a_j = \a \in \mathcal{S}$ and, since $\phi_2(\mathbf h) = 0$ there exists a monomial $\mathbf t^{\d}$ of $f_{ik}$ such that $\c_\b + \a_k = \d + \a_j$, that is, $\c_b + \a_k - \a_j \in \mathcal{S}$.    
    \item If $\c'_\b \in \mathrm{Ap}(\mathcal{S}, \a_l),$ then $\c'_\b - \a_k = \a - \a_i \not\in \mathcal{S}$. Thus, $\c'_\b \in \Ap(\mathcal{S}, \a_k) \cap \Ap(\mathcal{S}, \a_l)$. Moreover, $\c'_\b + \a_i - \a_k = \a \in \mathcal{S}$ and, since $\phi_2(\mathbf h) = 0$ there exists a monomial $\mathbf t^{\d}$ of $f_{jk}$ such that $\c'_\b + \a_i = \d + \a_j$, that is, $\c'_b + \a_i - \a_j \in \mathcal{S}$.
\end{itemize}

Conversely, let $\mathbf f = \mathbf t^{\a + \a_k - \a_i} \e_{ij} + \mathbf t^{\a + \a_j - \a_i} \e_{ik} - \mathbf t^\a \e_{jk}$. Clearly, $\mathbf f \in \ker \phi_2$, and $\mathbf f \not\in \Im \phi_3$ because $\a - \a_i \not\in \mathcal{S}$.
\end{proof}

\begin{lemma}\label{4i}
Let $d \geq 4$ and $\{i < j < k < l\} \subseteq \{1, \ldots, d\}$. There exists $\mathbf{f} =  f_{ik} \e_{ik} + f_{il} \e_{il} + f_{jk} \e_{jk} + f_{jl} \e_{jl} \in \ker \phi_2 \setminus \Im \phi_3$ for some $f_{ik}, f_{il}, f_{jk}, f_{jl} \in \Bbbk[\mathcal{S}]$ if and only if one of the following conditions holds:
\begin{enumerate}
\item There exists a permutation $\{i_1,i_2,i_3,i_4\} = \{i,j,k,l\}$ and $\b\in\Ap(\mathcal{S},\a_{i_1})\cap\Ap(\mathcal{S},\a_{i_4})$ such that $\b+\a_{i_2}-\a_{i_1}, \b+\a_{i_3} -\a_{i_4}$ and $\b+\a_{i_2}+\a_{i_3}-(\a_{i_1}+\a_{i_4})$ belong to $\mathcal{S}$.
\item There exists a permutation $\{i_1 < i_2 < i_3\} \subset \{i,j,k,l\}$ such that $\mathbf g = -\mathbf t^{\a_{i_4}} g_{i_1i_2} \e_{i_1i_2} + g_{i_1i_3} \e_{i_1i_3}+g_{i_2i_3} \e_{i_2i_3} \in \ker \phi_2 \setminus \Im \phi_3,$ for some $g_{i_1i_2}, g_{i_1i_3}, g_{i_2i_3} \in \Bbbk[\mathcal{S}]$ and $i_4 \in \{i,j,k,l\} \setminus \{i_1, i_2, i_3\}$.
\end{enumerate}
\end{lemma}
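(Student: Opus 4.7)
The plan is to work $\mathcal{S}$-homogeneously throughout. Since $\Bbbk[\mathcal{S}]$ is $\mathcal{S}$-graded with $\dim_\Bbbk \Bbbk[\mathcal{S}]_{\c}\le 1$ for every $\c$, the Koszul complex inherits this grading and both $\ker\phi_2$ and $\Im\phi_3$ decompose as direct sums of their graded pieces; thus I may assume $\mathbf f$ is homogeneous of some $\mathcal{S}$-degree $\mathbf B$, so each $f_{pq}$ is a scalar multiple of $\mathbf t^{\mathbf B-\a_p-\a_q}$ (and zero unless that degree lies in $\mathcal{S}$). Setting $\b := \mathbf B-\a_i-\a_k$, a direct inspection of the coefficients of $\e_i,\e_j,\e_k,\e_l$ in $\phi_2(\mathbf f)$ forces
\[
\mathbf f = \lambda\bigl(\mathbf t^{\b}\e_{ik} - \mathbf t^{\b+\a_k-\a_l}\e_{il} - \mathbf t^{\b+\a_i-\a_j}\e_{jk} + \mathbf t^{\b+\a_i+\a_k-\a_j-\a_l}\e_{jl}\bigr) =: \lambda\,\mathbf q_{\b}
\]
for some $\lambda\in\Bbbk$, and, whenever $\lambda\neq 0$, the four degrees $\b,\ \c_1:=\b+\a_k-\a_l,\ \c_2:=\b+\a_i-\a_j,\ \c_3:=\b+\a_i+\a_k-\a_j-\a_l$ all lie in $\mathcal{S}$.

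Next I would characterise when $\mathbf q_{\b}\in\Im\phi_3$. Any homogeneous preimage has the shape $\sum_{|I|=3}g_I\,\e_I$ with $I\subset\{i,j,k,l\}$ and $g_I$ of prescribed single $\mathcal{S}$-degree $\mathbf B-\sum_{p\in I}\a_p$; imposing that the coefficients of $\e_{ij}$ and $\e_{kl}$ (absent in $\mathbf f$) vanish pairs $g_{ijk}$ with $g_{ijl}$ and $g_{ikl}$ with $g_{jkl}$ through two scalars $\mu,\nu\in\Bbbk$, and the remaining four coefficient equations collapse to the single condition $\nu-\mu=\lambda$. Hence $\mathbf q_{\b}\in\Im\phi_3$ precisely when either
\[
\text{(A)}\quad \b-\a_l\in\mathcal{S}\ \text{and}\ \c_3-\a_k\in\mathcal{S},\qquad\text{or}\qquad \text{(B)}\quad \b-\a_j\in\mathcal{S}\ \text{and}\ \c_3-\a_i\in\mathcal{S};
\]
failure of both (A) and (B) distributes into four disjunctive subcases governed by the Ap\'ery memberships of $\b$ with respect to $\{\a_j,\a_l\}$ and of $\c_3$ with respect to $\{\a_i,\a_k\}$.

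Two of these subcases directly produce condition~(1): if $\b\in\Ap(\mathcal{S},\a_j)\cap\Ap(\mathcal{S},\a_l)$ take the permutation $(i_1,i_4;i_2,i_3)=(j,l;i,k)$ with Ap\'ery element $\b$, and if $\c_3\in\Ap(\mathcal{S},\a_i)\cap\Ap(\mathcal{S},\a_k)$ take $(i_1,i_4;i_2,i_3)=(i,k;j,l)$ with Ap\'ery element $\c_3$; the three shift memberships demanded by~(1) read off as $\c_1,\c_2,\c_3\in\mathcal{S}$, respectively $\c_1,\c_2,\b\in\mathcal{S}$, and are already known. The remaining ``mixed'' subcases are (i)~$\b-\a_j,\c_3-\a_k\in\mathcal{S}$ but $\b-\a_l,\c_3-\a_i\notin\mathcal{S}$, and (ii)~$\b-\a_l,\c_3-\a_i\in\mathcal{S}$ but $\b-\a_j,\c_3-\a_k\notin\mathcal{S}$. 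In~(i) I set $\mathbf g:=\mathbf f-\phi_3(g_{ij}\e_{ijk})$ with $g_{ij}:=-\lambda\mathbf t^{\b-\a_j}$ and $(\{i_1,i_2,i_3\},i_4)=(\{i,j,l\},k)$; in~(ii) I set $\mathbf g:=\mathbf f-\phi_3(g_{ij}\e_{ijl})$ with $g_{ij}:=\lambda\mathbf t^{\c_3-\a_i}$ and $(\{i_1,i_2,i_3\},i_4)=(\{i,j,k\},l)$. The assumed memberships make $g_{ij}\in\Bbbk[\mathcal{S}]$; a direct computation shows that $\mathbf g$ is supported exactly on $\{i_1i_2,i_1i_3,i_2i_3\}$ with $\e_{i_1i_2}$-coefficient divisible by $\mathbf t^{\a_{i_4}}$; and $\mathbf f\notin\Im\phi_3$ forces $\mathbf g\notin\Im\phi_3$, yielding~(2). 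The converse is immediate: (1) builds the quartet $\mathbf q_{\b}$ whose corners defeat both (A) and (B), and given $\mathbf g$ as in~(2) the element $\mathbf f:=\mathbf g+\phi_3(g_{i_1i_2}\e_{i_1i_2i_4})$ has the prescribed four-edge support, lies in $\ker\phi_2$, and cannot be a boundary without dragging $\mathbf g$ into $\Im\phi_3$.

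The hard part will be the bookkeeping in the mixed subcases: one must align the choice of $(i_1,i_2,i_3,i_4)$ with the exact pattern of shift-memberships of $\b$ and $\c_3$ so that the auxiliary $g_{i_1i_2}$ is a bona fide element of $\Bbbk[\mathcal{S}]$ (its $\mathcal{S}$-degree must actually lie in $\mathcal{S}$) and so that all cross-cancellations in the identity $\mathbf f=\mathbf g+\phi_3(g_{i_1i_2}\e_{i_1i_2i_4})$ work out, leaving $\mathbf g$ with exactly the three-edge support and the divisibility required by~(2).
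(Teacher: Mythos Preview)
Your argument is correct and takes a genuinely different route from the paper's. You reduce at once to an $\mathcal S$-homogeneous $\mathbf f$, which pins it down up to scalar as the explicit square $\mathbf q_{\b}$, and then characterise $\mathbf q_{\b}\in\Im\phi_3$ by the two triangle-conditions (A) and (B); distributing ``not~(A) and not~(B)'' into four cases cleanly yields condition~(1) in the two pure cases and condition~(2) in the two mixed ones (each dispatched by subtracting a single boundary $\phi_3(g\,\e_{ijk})$ or $\phi_3(g\,\e_{ijl})$). The paper instead keeps $\mathbf f$ inhomogeneous and peels off monomials of $f_{jk}$ one at a time, which leads to the step ``without loss of generality $\a\notin\Ap(\mathcal S,\a_i)$'' that tacitly assumes \emph{every} surviving monomial fails the same Ap\'ery condition; your graded reduction sidesteps this entirely and makes the case analysis symmetric and exhaustive.

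One caveat worth recording: the sentence ``any homogeneous preimage has the shape $\sum_{|I|=3}g_I\e_I$ with $I\subset\{i,j,k,l\}$'' is only literally true when $d=4$; for $d>4$ a preimage may involve triples containing a vertex outside $\{i,j,k,l\}$, so the biconditional ``$\mathbf q_{\b}\in\Im\phi_3$ iff (A) or (B)'' needs an extra argument. This does not affect your forward implication (only the easy direction, that (A) or (B) forces $\mathbf q_{\b}\in\Im\phi_3$, is used there), but it does affect your converse from~(1), where defeating (A) and (B) alone no longer guarantees $\mathbf q_{\b}\notin\Im\phi_3$ once external extremal rays are available. The paper's converse for~(1) has exactly the same lacuna (it only checks that the $\e_{i_2i_3}$-coefficient cannot arise from $\phi_3$ of triples inside $\{i,j,k,l\}$), and since the lemma is only invoked at $d=4$ in the proof of Theorem~\ref{th_depth2b}, the issue is harmless in the intended application.
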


\begin{proof}
Let $f_{jk} = \sum_{\a \in \mathcal{S}} \lambda_\a^{jk} \mathbf{t}^\a$. If $f_{jk} = 0$, then 
\begin{align*}
0 = \phi_2(\mathbf f) & = f_{ik} (\mathbf t^{\a_k}\e_{i} - \mathbf t^{\a_i}\e_k) + f_{il} (\mathbf t^{\a_l}\e_{i}-\mathbf t^{\a_i}\e_l) + f_{jk} (\mathbf t^{\a_k}\e_j - \mathbf t^{\a_j}\e_k) + f_{jl} (\mathbf t^{\a_l}\e_j - \mathbf t^{\a_j}\e_l) \\ & = (f_{ik} \mathbf t^{\a_k} + f_{il} \mathbf t^{\a_l})\e_{i} + (f_{jk} \mathbf t^{\a_k} + f_{jl} \mathbf t^{\a_l})\e_j - (f_{ik} \mathbf t^{\a_i} + f_{jk}\mathbf t^{\a_j})\e_k  -  (f_{jl} \mathbf t^{\a_j} + f_{il}\mathbf t^{\a_i})\e_l
\end{align*}
implies, $\mathbf f = 0$, a contradiction. Hence, for each $\a \in \mathcal{S}$ such that $\lambda_\a^{jk} \neq 0$, there exist $\c_\a, \c'_\a \in \mathcal{S}$ such that $\a + \a_j = \c_\a + \a_i$ and $\a + \a_k = \c'_\a + \a_l$; in particular, both $\a + \a_j - \a_i$ and $\a + \a_k - \a_l$ belong to $S$. Moreover, there exists $\c''_\a \in \mathcal{S}$ such that $\c_\a + \a_k = \c''_\a + \a_l$. So, $\a + \a_j + \a_k = \c_\a + \a_i + \a_k = \c''_\a + \a_i + \a_l$, that is, $\a + \a_j + \a_k - (\a_i + \a_l)$ belongs to $\mathcal{S}$.
Now, if $\a \in \Ap(\mathcal{S}, \a_i) \cap \Ap(\mathcal{S}, a_l)$, then we get (1). Otherwise, without loss of generality, we suppose $\a \not\in \Ap(\mathcal{S},\a_i) $. Then \[\mathbf{g}_a := \mathbf{t}^{\a-\a_i + \a_k} \e_{ij} - \mathbf{t}^{\c_\a} \e_{ik} + \mathbf{t}^\a \e_{jk} = \phi_3(\mathbf{t}^{\a-\a_i} \e_{ijk}) \in \Im \phi_3 \subset \ker \phi_2\] Therefore, if $\a \not\in \Ap(\mathcal{S}, \a_i),$ for every $\a \in \mathcal{S}$ with $\lambda_\a^{jk} \neq 0$, then \[\mathbf{g} : = \mathbf{f} - \sum_{\a \in \mathcal{S}} \lambda_\ba^{jk} \mathbf{g}_\a = -\left(
\sum_{\a \in \mathcal{S}} \lambda_\ba^{jk} \mathbf{t}^{\a-\a_i + \a_k} \right) \e_{ij} + \left(f_{ik} + \sum_{\a \in \mathcal{S}} \lambda_\a^{jk} \mathbf{t}^{\c_\a} \right) \e_{ik} + f_{il} \e_{il} + f_{jl} \e_{jl} \in \ker \phi_2 \setminus \Im \phi_3.\] Finally, since $\phi_2(\mathbf{g}) = 0$, we conclude that the coefficient of $\e_{ik}$ is zero and, consequently, that (2) holds. 

Conversely, we treat the two cases separately. On the one hand, if (1) holds, then \[\mathbf{f} = \mathbf{t}^{\b+\a_{i_2}-\a_{i_1}} \e_{i_1 i_3} + \mathbf{t}^{\b+\a_{i_3}-\a_{i_4}} \e_{i_2 i_4} - \mathbf{t}^{\b} \e_{i_2 i_3} - \mathbf{t}^{\b+\a_{i_2}+\a_{i_3} - (\a_{i_1}+\a_{i_4})} \e_{i_1 i_4} \in \ker \phi_2;\] moreover, $\mathbf f \not\in \Im \phi_3$ because $\b - \a_{i_1} \not\in \mathcal{S}$ and $\b - \a_{i_2} \not\in \mathcal{S}$, and therefore the third addend cannot come from any generator of $\Im \phi_3$. On the other hand, if (2) holds, then arranging 
indexes if necessary, we have that \[\mathbf{f} = \mathbf{g} + \phi_3(g_{i_1 i_2} \e_{i_1i_2i_4}) = g_{i_1 i_3}\e_{i_1 i_3} + g_{i_2 i_3}\e_{i_2 i_3} + 
\mathbf t^{\a_{i_1}} g_{i_1 i_2}\e_{i_2 i_4} - \mathbf t^{\a_{i_2}} g_{i_1 i_2}\e_{i_1 i_4}
\in \ker \phi_2 \setminus \Im \phi_3,\] with $i_4 \in \{i,j,k,l\} \setminus \{i_1, i_2, i_3\}$
\end{proof}

We are now in a position to state and prove our characterization of depth two for $d = 4$.

\begin{theorem}\label{th_depth2b}
If $d=4$ then $\depth(\Bbbk[\mathcal{S}])=2$ if and only if $\Ap(\mathcal{S}, \a)$ does not have a maximal element for some (every) $\a \in \mathcal{S}$ and there exists a permutation $\{i,j,k,l\}=\{1,2,3,4\}$ and $\b \in \mathcal{S}$ such that one of the following conditions holds: 
\begin{enumerate}
	\item $\b\in\Ap(\mathcal{S},\a_i) \cap \Ap(\mathcal{S},\a_j)$ such that  $\b+\a_k-\a_i$ and $\b+\a_l-\a_i$ belong to $S$. 
	\item  $\b\in\Ap(\mathcal{S},\a_i)\cap\Ap(\mathcal{S},\a_j)$ such that $\b+\a_k-\a_i, \b+\a_l-\a_j$ and $\b+\a_k+\a_l-(\a_i+\a_j)$ belong to $S$. %
\end{enumerate}
In both cases, $\Ap(\mathcal{S},\a_i)\cap\Ap(\mathcal{S},\a_j)$ has maximal with respect to $\preceq_\mathcal{S}$.
\end{theorem}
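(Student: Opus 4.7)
\emph{Proof proposal.} The plan is to convert the depth condition into a Koszul-homology statement via Proposition~\ref{rem-Koszul}, and then to read off the combinatorics from Lemmas~\ref{3i} and~\ref{4i}. For $d=4$, Proposition~\ref{rem-Koszul} says that $\depth(\Bbbk[\mathcal{S}])=2$ is equivalent to $H_2(K_\bullet(\overline{\mathbf{t}}),\Bbbk[\mathcal{S}])\neq 0$ together with $\depth(\Bbbk[\mathcal{S}])\geq 2$. Since $\depth(\Bbbk[\mathcal{S}])\geq 1$ automatically (as $\mathbf{t}^{\a_1}$ is a non-zero-divisor), Proposition~\ref{prop:depth1} identifies the condition $\depth\geq 2$ with the non-existence of a maximum in $\Ap(\mathcal{S},\a)$ for some (equivalently every) $\a\in\mathcal{S}$, which is precisely the first clause in the statement. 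The real content is therefore to translate $H_2\neq 0$ into the dichotomy (1)/(2).

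For the forward direction I would pick any $\mathbf{f}\in\ker\phi_2\setminus\Im\phi_3$ and study its support as a subgraph of the complete graph $K_4$ on $\{1,2,3,4\}$. Since $\Im\phi_3$ is generated by elements of the form $\phi_3(\mathbf{t}^{\mathbf{c}}\e_{i_1i_2i_3})$ whose support is a triangle, subtracting suitable members of $\Im\phi_3$ from $\mathbf{f}$ reduces its support to a minimal cycle in $K_4$, necessarily a triangle or a $4$-cycle. A triangle support $\{i_1i_2,\,i_1i_3,\,i_2i_3\}$ triggers Lemma~\ref{3i} with $i_4$ equal to the remaining index; that lemma supplies $\a\in\Ap(\mathcal{S},\a_{i_3})\cap\Ap(\mathcal{S},\a_{i_4})$ with $\a+\a_{i_1}-\a_{i_3}$ and $\a+\a_{i_2}-\a_{i_3}$ in $\mathcal{S}$, i.e.\ condition~(1) after the renaming $(i,j,k,l):=(i_3,i_4,i_1,i_2)$. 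A $4$-cycle support triggers Lemma~\ref{4i}: its alternative~(1) yields condition~(2) of the theorem after a suitable renaming of indices, while its alternative~(2) sends us back to the triangle case already handled.

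For the converse, given $\b$ satisfying (1) or (2), I would invoke the explicit constructions appearing at the end of the proofs of Lemmas~\ref{3i} and~\ref{4i} to produce an element $\mathbf{f}\in\ker\phi_2\setminus\Im\phi_3$; this shows $H_2\neq 0$, hence $\depth(\Bbbk[\mathcal{S}])\leq 2$, which combined with the no-maximum hypothesis (giving $\depth\geq 2$) forces $\depth=2$. For the final maximality assertion I would apply Proposition~\ref{max-2}: in case~(1) the hypotheses force $\b+\a_k$ and $\b+\a_l$ to lie outside $\Ap(\mathcal{S},\a_i)$, and hence outside $\Ap(\mathcal{S},\{\a_i,\a_j\})$; in case~(2) they force $\b+\a_k\notin\Ap(\mathcal{S},\a_i)$ and $\b+\a_l\notin\Ap(\mathcal{S},\a_j)$, so again neither lies in $\Ap(\mathcal{S},\{\a_i,\a_j\})$. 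Proposition~\ref{max-2} then produces the desired maximum of $\Ap(\mathcal{S},\{\a_i,\a_j\})$.

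The main obstacle is the \emph{reduction to minimal support} step: I need to certify that every nonzero class in $H_2=\ker\phi_2/\Im\phi_3$ admits a representative supported on a single triangle or a single $4$-cycle of $K_4$. The natural strategy is to mimic the ``subtract $\phi_3$ of an appropriate monomial times $\e_{i_1i_2i_3}$'' maneuver carried out in the proofs of Lemmas~\ref{3i} and~\ref{4i}, edge by edge, while checking at each step that the class is not accidentally killed. This bookkeeping is where most of the care has to go.
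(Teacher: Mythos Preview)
Your overall framework coincides with the paper's: translate $\depth=2$ into $H_2(K_\bullet(\overline{\mathbf t}),\Bbbk[\mathcal S])\neq 0$ together with $\depth\geq 2$ via Propositions~\ref{rem-Koszul} and~\ref{prop:depth1}, feed the forward and backward directions through Lemmas~\ref{3i} and~\ref{4i}, and close with Proposition~\ref{max-2}. Your converse and your maximality paragraph are correct and are exactly what the paper does.

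The genuine gap is precisely where you flagged it. The claim that an arbitrary $\mathbf f=\sum_{i<j} f_{ij}\e_{ij}\in\ker\phi_2\setminus\Im\phi_3$ can be reduced modulo $\Im\phi_3$ to a representative supported on a single triangle or $4$-cycle is not something you can get by ``edge-by-edge'' subtraction: the only elements of $\Im\phi_3$ that touch $\e_{kl}$ contribute a coefficient in the ideal $(\mathbf t^{\a_i},\mathbf t^{\a_j})$ with $\{i,j,k,l\}=\{1,2,3,4\}$, so the edge $\{k,l\}$ cannot be killed whenever $f_{kl}$ carries a monomial $\mathbf t^{\b}$ with $\b\in\Ap(\mathcal S,\a_i)\cap\Ap(\mathcal S,\a_j)$. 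The paper does not attempt an unconditional support reduction. Instead it argues by a dichotomy on $\mathbf f$: either every $f_{kl}$ lies in the complementary ideal $(\mathbf t^{\a_i},\mathbf t^{\a_j})$, in which case one subtracts $\phi_3$ applied to four terms to kill the opposite pair $\e_{12},\e_{34}$ and lands in the $4$-cycle situation of Lemma~\ref{4i} (which in turn either yields condition~(2) or feeds back into Lemma~\ref{3i}); or some $f_{kl}$ carries such an Ap\'ery monomial $\mathbf t^{\b}$, in which case the paper reads off conditions~(1)/(2) \emph{directly} from the vanishing of the coefficients of $\e_1,\ldots,\e_4$ in $\phi_2(\mathbf f)=0$, tracking how the monomial $\mathbf t^{\b}$ must be cancelled, and subtracting a single $\phi_3(\mathbf t^{\b-\a_m}\e_{m34})$ only when an unwanted coincidence of indices occurs. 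Replacing your ``reduce to a minimal cycle'' step by this dichotomy closes the gap; the rest of your sketch then goes through verbatim.
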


\begin{proof}
If $\depth(\Bbbk[\mathcal{S}])=2$, by Proposition \ref{prop:depth1}, $\Ap(\mathcal{S}, \a)$ does not have  a maximal element for some (every) $\a \in \mathcal{S}$. Moreover, by Proposition~\ref{rem-Koszul}, $H_2(K_\bullet(\overline{x}), \Bbbk[\mathcal{S}])\neq 0$. Let $\mathbf{f}=\sum_{1 \leq i < j \leq 4} f_{ij} \e_{ij}\in\ker \phi_2\setminus \Im \phi_3$, where $f_{ij} %
\in \Bbbk[\mathcal{S}],\ 1 \leq i < j \leq 4$. we distinguish two cases:
\begin{enumerate}
\item If there exists $h_{ij}^k \in \Bbbk[\mathcal{S}],\ 1 \leq i < j \leq 4$ and $k \in \{1,2\}$, such that $\mathbf{f}$ can written in the form 
\begin{align*}
(\mathbf{t}^{a_3} h_{12}^1 + \mathbf{t}^{a_4} h_{12}^2)  \mathbf{e}_{12} + (\mathbf{t}^{a_4} h_{13}^1 - \mathbf{t}^{a_2} h_{13}^2) \mathbf{e}_{13} - (\mathbf{t}^{a_2} h_{14}^1 + \mathbf{t}^{a_3} h_{14}^2)  \mathbf{e}_{14} + \\ + (\mathbf{t}^{a_1} h_{23}^1 + \mathbf{t}^{a_4} h_{23}^2) \mathbf{e}_{23} 
+ (\mathbf{t}^{a_1} h_{24}^1 - \mathbf{t}^{a_3} h_{24}^2)  \mathbf{e}_{24} + (\mathbf{t}^{a_1} h_{34}^1 + \mathbf{t}^{a_2} h_{34}^2)  \mathbf{e}_{34},     
\end{align*} 
then
\begin{align*}
\g =\ & \mathbf{f}-\phi_3(h_{12}^1 \mathbf{e}_{123}+h_{12}^2 \mathbf{e}_{124} + h_{34}^1 \mathbf{e}_{134}+h_{34}^2\mathbf{e}_{234}) \\ =\ & 
(\mathbf{t}^{\a_4} h_{13}^1 - \mathbf{t}^{\a_2} h_{13}^2) \mathbf{e}_{13} - (\mathbf{t}^{\a_2} h_{14}^1 + \mathbf{t}^{\a_3} h_{14}^2)  \mathbf{e}_{14} \\ & + (\mathbf{t}^{\a_1} h_{23}^1 + \mathbf{t}^{\a_4} h_{23}^2) \mathbf{e}_{23}  + (\mathbf{t}^{\a_1} h_{24}^1 - \mathbf{t}^{\a_3} h_{24}^2)  \mathbf{e}_{24}  \in \ker \phi_2\setminus \Im \phi_3
\end{align*} 
and, by Lemmas \ref{4i} and \ref{3i}, we are done.
\item If there exists a permutation $\{i,j,k,l\} = \{1,2,3,4\}$ such that $\pm f_{kl} = \sum_{\b \in \mathcal{S}} \lambda_\b^{kl} \mathbf{t}^b$ cannot be written in the form $\mathbf{t}^{\a_i} g_j \pm \mathbf{t}^{\a_j} g_i$, then there exists $\b \in \Ap(\mathcal{S},\a_i) \cap \Ap(\mathcal{S},\a_j)$ with $\lambda_\b^{kl} \neq 0$. For simplicity, rearranging indices if necessary, we suppose $i=1,j=2,k=3$ and $l=4$. Therefore, $\mathbf f = \sum_{1 \leq i < j \leq 4} f_{ij} \e_{ij}$ and there exists a monomial $\mathbf t^\b$ of $f_{34}$ such that $\b \in \Ap(\mathcal{S},\a_1) \cap \Ap(\mathcal{S},\a_2)$. Now, since 
\begin{align*}
0 = \phi_2(\mathbf f) =\ & f_{12} (\mathbf t^{\a_2} \e_1 - \mathbf t^{\a_1} \e_2) + f_{13} (\mathbf t^{\a_3} \e_1 - \mathbf t^{\a_1} \e_3) + f_{14} (\mathbf t^{\a_4} \e_1 - \mathbf t^{\a_1} \e_4) \\ & + f_{23} (\mathbf t^{\a_3} \e_2 - \mathbf t^{\a_2} \e_3) + f_{24} (\mathbf t^{\a_4} \e_2 - \mathbf t^{\a_2} \e_4) + f_{34} (\mathbf t^{\a_4} \e_3 - \mathbf t^{\a_3} \e_4),
\end{align*}
in particular the coefficients $-f_{13} \mathbf{t}^{\a_1} - f_{23}  \mathbf{t}^{\a_2} + f_{34} \mathbf{t}^{\a_4}$ and $-f_{14} \mathbf{t}^{\a_1} - f_{24} \mathbf{t}^{\a_2} - f_{34} \mathbf{t}^{\a_3}$ of $\e_3$ and $\e_4$, respectively, are zero. Therefore, there exist $\c, \c' \in \mathcal{S}$ such that $\b + \a_4 = \c + \a_i$ and $\b + \a_3 = \c' + \a_j$ with $i,j \in \{1,2\}.$ If $i=j$, then $\b - \a_i = \c' - \a_3 = \c - \a_4$, so $\mathbf f - \phi_3(\mathbf t^{\b-\a_i} \e_{i34}) = \mathbf f - \mathbf{t}^\b \e_{34} + \mathbf{t}^{\c'} \e_{i4} - \mathbf{t}^{\c} \e_{i3} \in \ker \phi_2 \setminus \Im \phi_3$. Thus, we may suppose $i \neq j,$ say $i=1$ and $j=2$, so that $\b + \a_4 = \c + \a_1$ and $\b + \a_3 = \c' + \a_2$, that is, $\b + \a_4 - \a_1$ and $\b + \a_3 - \a_2$. Finally, since the coefficient $f_{12} \mathbf t^{\a_2} + f_{13} \mathbf t^{\a_3} + f_{14} \mathbf t^{\a_4}$ of $\e_1$ in $\phi_2(\mathbf f)$ must be zero, there exists $\c'' \in \mathcal{S}$ such that 
\begin{enumerate}
    \item $\c + \a_3 = \c'' + \a_2$. So, $\b + \a_3 + \a_4 = \c'' + \a_1 + \a_2 $ which implies $\b + \a_3 + \a_4 - (\a_1 + \a_2) \in \mathcal{S}$ or 
    \item $\c + \a_3 = \c'' + \a_4$. So, $\b + \a_3 + \a_4 = \c'' + \a_4 + \a_1$  which implies $\c' + \a_2 = \b + \a_3 = \c'' + \a_1$. Moreover, since the coefficient $f_{24} \mathbf t^{\a_4} - f_{12} \mathbf t^{\a_1} + f_{23} \mathbf t^{\a_3}$ of $\e_2$ in $\phi_2(\mathbf f)$ must be zero, there exists $\c''' \in \mathcal{S}$ such that $\c + \a_3 = \c''' + \a_4$. So $\b + \a_3 + \a_4 = \c''' + \a_4 + \a_1$  which implies $\c' + \a_2 = \b + \a_3 = \c''' + \a_1$. Therefore, $\b-\a_1 = \c'''-\a_3 = \c - \a_4$, that is $\mathbf f - \phi_3(\mathbf t^{\b-\a_1} \e_{134}) = \mathbf f - \mathbf{t}^\b \e_{34} + \mathbf{t}^{\c'''} \e_{14} - \mathbf{t}^{\c} \e_{i3} \in \ker \phi_2 \setminus \Im \phi_3$. Thus, we may suppose that $\c + \a_3 = \c''' + \a_2$ and, consequently, $\b + \a_3 + \a_4 = \c''' + \a_1 + \a_2 $ which implies $\b + \a_3 + \a_4 - (\a_1 + \a_2) \in \mathcal{S}$.
\end{enumerate}
\end{enumerate}
Conversely, by Lemmas \ref{4i} and \ref{3i}, $\ker \phi_2 \setminus \Im \phi_3 \neq \varnothing$. So, $1 \leq \depth(\Bbbk[\mathcal{S}]) \leq 2$, and since, by Proposition \ref{prop:depth1} $\depth(\Bbbk[\mathcal{S}]) \neq 1$, we are done.

Finally, %
by Proposition~\ref{max-2} we conclude that $\Ap(\mathcal{S},\a_i)\cap\Ap(\mathcal{S},\a_j)$ has a maximal element with respect to $\preceq_\mathcal{S}$.
\end{proof}

Unlike the case $d=3$, the existence of a maximal element is not sufficient to guarantee depth two as the following example shows.

\begin{example}
	Let $\mathcal{S} \subset \mathbb{N}^4$ be the affine semigroup generated by the columns $\a_1, \ldots, \a_7$ of the following matrix 
	\[
A = \left(
\begin{array}{ccccccc}
2 & 0 & 0 & 0 & 5 & 7 & 5 \\
0 & 2 & 0 & 0 & 5 & 5 & 7 \\
0 & 0 & 2 & 0 & 7 & 5 & 7 \\
0 & 0 & 0 & 2 & 7 & 7 & 5
\end{array}
\right).	
	\]
Since $\a_5+\a_1=\a_3+\a_6$ and $\a_5+\a_2=\a_4+\a_7$, $\Ap(
\mathcal{S},\a_3)\cap\Ap(\mathcal{S},\a_4)$ has a maximal element by Proposition~\ref{max-2}. Moreover $\Bbbk[\mathcal{S}]$ is not Cohen-Macaulay, by Proposition~\ref{prop:1.6}. One can easily check that $x_3,x_4,x_1+x_2$ is a regular sequence on  $\Bbbk[\mathcal{S}]$, which implies $\depth(\Bbbk[\mathcal{S}])=3$. This shows that the last condition on $\b$ in the second statement of Theorem~\ref{th_depth2b} is necessary.
\end{example}

Conditions (1) and (2) in Theorem \ref{th_depth2b} have a clear combinatorial meaning; For a better understanding of the following result, it is convenient to take into account the notation and the results established at the beginning of Section \ref{Sect_Betti}.

\begin{proposition}\label{prop:Tc}
If $d=4$, then there exist  $\{i,j,k,l\}=\{1,2,3,4\}$ and $\b\in\Ap(\mathcal{S},\a_i) \cap \Ap(\mathcal{S},\a_j)$ satisfying conditions (1) or (2) in Theorem \ref{th_depth2b} if and only if 
there exist $\mathbf{c}$ and $\{i,j,k,l\}=\{1,2,3,4\}$ such that $T_\c$ is one of the following:
\begin{itemize}
    \item[(a)] The hollow triangle with vertices $i,k$ and $l$.
    \item[(b)] The hollow triangle with vertices $i,k$ and $l$ and the edge $\{i,j\}$.
    \item[(c)] A hollow tetrahedron $i,j,k$ and $l$ without, at least, the faces $\{i,k,l\}$ and $\{j,k,l\}$.
    \item[(d)] The square with edges $\{i,j\}, \{j,k\}, \{k,l\}$ and $\{i,l\}$, and the edge (diagonal) $\{i,k\}$.
    \item[(e)] The square with edges $\{i,j\}, \{j,k\}, \{k,l\}$ and $\{i,l\}$.
\end{itemize}
\end{proposition}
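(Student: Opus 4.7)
The proposition is essentially a combinatorial dictionary between the Ap\'ery-set hypotheses of Theorem \ref{th_depth2b} and the face structure of the simplicial complex $T_\c$. The plan is to use the shift $\c := \b + \a_k + \a_l$ as the bridge. Under this choice, for every $F \subseteq E$, the equivalence
\[
F \in T_\c \ \Longleftrightarrow\ \b + \a_k + \a_l - \sum_{\a \in F} \a \in \mathcal{S}
\]
converts each relation appearing in (1) or (2) into a concrete statement about a face of $T_\c$.

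For the forward direction I would first treat condition (1). The four hypotheses translate as follows: $\b \in \Ap(\mathcal{S},\a_i)$ becomes $\{i,k,l\} \notin T_\c$, $\b \in \Ap(\mathcal{S},\a_j)$ becomes $\{j,k,l\} \notin T_\c$, and $\b + \a_k - \a_i, \b + \a_l - \a_i \in \mathcal{S}$ become $\{i,l\}, \{i,k\} \in T_\c$. Together with the trivial $\{i\}, \{k\}, \{l\}, \{k,l\} \in T_\c$, these exhibit the hollow triangle on $i,k,l$ inside $T_\c$. I then classify the remaining possibilities by splitting on which of the $j$-incident faces belong to $T_\c$, using that $T_\c$ is closed under subsets to propagate information from $2$-faces down to edges and vertices. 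Up to the permutation symmetry inside $\{i,k,l\}$, the resulting configurations are exactly cases (a), (b), (c), (d). Under condition (2) the translation yields the four edges of the square $\{i,j\}, \{j,k\}, \{k,l\}, \{i,l\}$ in $T_\c$ together with $\{i,k,l\}, \{j,k,l\} \notin T_\c$, and the only degrees of freedom left are the two diagonals $\{i,k\}, \{j,l\}$, whose presence or absence distributes the outcome among (c), (d), (e).

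For the converse I fix $\c$ with $T_\c$ of one of the five shapes and set $\b := \c - \a_k - \a_l$. Then $\{k,l\} \in T_\c$ yields $\b \in \mathcal{S}$, and the feature $\{i,k,l\}, \{j,k,l\} \notin T_\c$, common to all five shapes, reads precisely as $\b - \a_i, \b - \a_j \notin \mathcal{S}$, i.e.\ $\b \in \Ap(\mathcal{S},\a_i) \cap \Ap(\mathcal{S},\a_j)$. I then read the extra edges off from each shape and translate them back: the edges $\{i,k\}$ and $\{i,l\}$ present in (a), (b), (c), (d) give $\b + \a_l - \a_i, \b + \a_k - \a_i \in \mathcal{S}$, which is condition (1); the edges $\{i,l\}, \{j,k\}, \{i,j\}$ of (e) give $\b + \a_k - \a_i, \b + \a_l - \a_j, \b + \a_k + \a_l - \a_i - \a_j \in \mathcal{S}$, which is condition (2).

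The main obstacle will be the forward case analysis: making sure that every possible pattern of the ``optional'' faces collapses, after a permutation of $\{i,k,l\}$, into one of the five listed shapes. A delicate sub-case is the potential appearance of an isolated vertex $\{j\}$ attached to the hollow triangle with no incident edge in $T_\c$, a configuration that a priori is not in the list (a)-(e). I expect to handle it by a direct analysis of the factorizations of $\b + \a_k + \a_l - \a_j$ in $\mathcal{S}$: either such a pattern turns out to be inconsistent with the hypotheses of (1), or a small adjustment of $\c$ (for instance replacing $\c$ by $\c - \a_j + \a_p$ for a suitable $p \in \{i,k,l\}$) brings the configuration back to case (b) or (c) after relabeling. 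Once this bookkeeping is stabilized, the remainder of the argument is a routine unwinding of definitions.
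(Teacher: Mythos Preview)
Your approach is exactly the paper's: set $\c = \b + \a_k + \a_l$, translate the Ap\'ery conditions into face membership in $T_\c$, and then run a case analysis on the optional $j$-faces. The paper's own argument for the forward direction is extremely terse (it simply asserts that if $\c \notin \Ap(\mathcal{S},\a_j)$ then one lands in (b) or (c)), so your plan is if anything more explicit than the original.

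Your instinct about the isolated-vertex configuration is well placed: the paper's proof does not visibly exclude the case where $j$ is a vertex of $T_\c$ with no incident edge, and your proposed remedy of passing to a different $\c$ is not obviously guaranteed to succeed without further argument. But there is a second problematic configuration you have not flagged. Under condition~(1) the analysis of ``exactly two $j$-edges present'' bifurcates: if $\{i,j,k\}$ is \emph{not} a $2$-face you get case~(d), but if $\{i,j,k\}$ \emph{is} a $2$-face and $\{j,l\}\notin T_\c$ you obtain precisely the complex drawn in Figure~\ref{fig:Tb} (a filled triangle glued to a hollow triangle along an edge), which the paper explicitly notes is \emph{not} among (a)--(e). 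Nothing in condition~(1) rules out $\c - \a_i - \a_j - \a_k \in \mathcal{S}$, so this case must also be dealt with, either by showing it cannot occur or by producing an alternative $\c'$. Your relabeling moves inside $\{i,k,l\}$ do not help here, since the picture is already symmetric in that triple. In short: same strategy as the paper, same soft spots, and you should budget for two delicate sub-cases rather than one.
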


\begin{proof}
Suppose that there exists a permutation $\{i,j,k,l\}=\{1,2,3,4\}$ and $\b\in\Ap(\mathcal{S},\a_i)\cap\Ap(\mathcal{S},\a_j)$ such that $\b+\a_k-\a_i$ and $\b+\a_l-\a_i$ belong to $S$. In this case, if $\b + \a_k + \a_l \in \Ap(\mathcal{S}, \a_j)$, then $T_{\b + \a_k + \a_l}$ is the howllow triangle with vertices $i,k$ and $l$; otherwise either $T_{\b + \a_k + \a_l}$ is the hollow triangle without vertices $i, k$ and $l$ and the edge $\{i, j\}$ or  $T_{\b + \a_k + \a_l}$ contains two hollow triangles; namely, the one with vertices $i, k$ and $l$ and the one with vertices $j, k$ and $l$. %
Suppose now that there exists a permutation $\{i,j,k,l\}=\{1,2,3,4\}$ and  $\b\in\Ap(\mathcal{S},\a_i)\cap\Ap(\mathcal{S},\a_j)$ such that $\b+\a_k-\a_i, \b+\a_l-\a_j$ and $\b+\a_k+\a_l-(\a_i+\a_j)$ belong to $S$. In this case, if $\b + \a_l \in \Ap(\mathcal{S}, \a_i)$ and $\b + \a_k \in \Ap(\mathcal{S}, \a_j)$, then $T_{\b + \a_k + \a_l}$ is a square without interior; otherwise, we arrive at one of the configurations of the previous cases. %

In cases (a)-(d), we have, among other conditions, that $\b = \c - \a_k - \a_l \in \mathcal S$ and that both $\b + \a_k - \a_i = \c - \a_i -\a_l = $ and $\b + \a_l - \a_i = \c - \a_i - \a_k$ belongs to $\mathcal{S}$, and that $ \b - \a_i = \c - \a_i - \a_k - \a_l \not\in \mathcal{S}$ and $\b - \a_j = \c - \a_j - \a_k - \a_l = \not\in \mathcal{S}$, that is, $\b \in \Ap(\mathcal{S}, \a_i) \cap \Ap(\mathcal{S}, \a_j)$; so, condition (1) in Theorem \ref{th_depth2b} holds. And, in case (e), we have the previous conditions plus $ \b + \a_k + \a_l - \a_i - \a_j = \c - \a_i - \a_j \in \mathcal{S}$; so, condition (2) in Theorem \ref{th_depth2b} holds.
\end{proof}

Observe that cases (a)-(b) implies that there exists $\c \in \mathcal{S}$ such that $\widetilde{H}_1(T_{\c}) \neq 0$, that is, $\c \in D(1)$. However, conditions (a)-(e) in Proposition~\ref{prop:Tc} may be not replaced by $D(1) \neq \varnothing$ because the following configuration for $T_\mathbf{c}$ such that $\c \in D(1)$ does not appear in Proposition~\ref{prop:Tc}.

\definecolor{grey}{rgb}{0.33,0.33,0.33}
\begin{center}
\begin{figure}[h]
\begin{tikzpicture}[scale=1.5]
\clip(-.5,-.5) rectangle (2.5,2.5);
\fill[line width=1pt,,fill=grey,fill opacity=0.1] (0,0) -- (2,2) -- (2,0) -- cycle;
\draw [line width=1pt] (0,0)-- (2,2);
\draw [line width=1pt] (2,2)-- (2,0);
\draw [line width=1pt] (2,0)-- (0,0);
\draw [line width=1pt] (0,2)-- (0,0);
\draw [line width=1pt] (0,2)-- (2,2);
\draw [fill=grey] (0,0) circle (1.5pt);
\draw (-0.25,0.25) node {$k$};
\draw[fill=grey] (0,2) circle (1.5pt);
\draw (-0.25,2.25) node {$l$};
\draw[fill=grey] (2,2) circle (1.5pt);
\draw (2.25,2.25) node {$i$};
\draw[fill=grey] (2,0) circle (1.5pt);
\draw (2.25,0.25) node {$j$};
\end{tikzpicture}
\caption{Simplicial complex with four vertices consisting of one triangle and one hollow triangle.}\label{fig:Tb}
\end{figure}
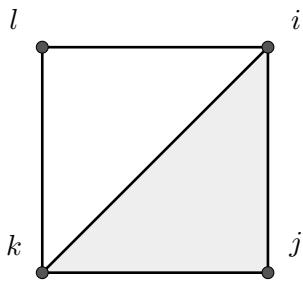
\end{center}

Taking into account the our results about depth two, we dare to propose the following optimistic conjecture.

\begin{conjecture}\label{conj_depth}
If $\depth(\Bbbk[\mathcal{S}]) = 2,$ then there exists $E' \subseteq E$ of cardinality $2$ such that $\Ap(\mathcal{S},E')$ has a maximal element with respect to $\preceq_{\mathcal{S}}$.
\end{conjecture}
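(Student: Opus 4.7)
The plan is to generalize the Koszul-complex argument used for $d=4$ in Theorem~\ref{th_depth2b} to arbitrary $d$. Assume $\depth(\Bbbk[\mathcal{S}])=2$. By Proposition~\ref{rem-Koszul}, this is equivalent to $H_{d-2}(K_\bullet(\overline{\mathbf{t}}),\Bbbk[\mathcal{S}])\neq 0$, so one can choose a cycle
\[
\mathbf{f}=\sum_{|I|=d-2} f_I\,\mathbf{e}_I \;\in\; \ker\phi_{d-2}\setminus \Im\phi_{d-1},
\]
where $I$ ranges over the $(d-2)$-subsets of $\{1,\dots,d\}$ and each $f_I\in\Bbbk[\mathcal{S}]$. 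For each such $I$ denote its complement by $\{i_I,j_I\}=\{1,\dots,d\}\setminus I$. The goal is to extract from $\mathbf{f}$ an index pair $\{i,j\}$ and an element $\b\in\Ap(\mathcal{S},\{\a_i,\a_j\})$ such that $\b+\a_k\notin\Ap(\mathcal{S},\{\a_i,\a_j\})$ for every $k\notin\{i,j\}$; Proposition~\ref{max-2} would then furnish a maximal element.

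The first step is to reduce $\mathbf{f}$ modulo $\Im\phi_{d-1}$ until every monomial $\mathbf{t}^{\b}$ occurring in any surviving coefficient $f_I$ satisfies $\b\in\Ap(\mathcal{S},\{\a_{i_I},\a_{j_I}\})$. This is the exact analogue of the substitutions carried out in Lemmas~\ref{3i} and~\ref{4i}: whenever $\mathbf{t}^\b$ appears in $f_I$ and, say, $\b-\a_{i_I}\in\mathcal{S}$, one subtracts $\phi_{d-1}(\mathbf{t}^{\b-\a_{i_I}}\mathbf{e}_{I\cup\{i_I\}})$ from $\mathbf{f}$, removing the offending monomial (at the cost of modifying a controlled number of neighbouring coefficients). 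Since $[\mathbf{f}]\neq 0$, at least one coefficient must remain nonzero at the end of this process; pick such an $I$, let $\{i,j\}=\{1,\dots,d\}\setminus I$, and fix any monomial $\mathbf{t}^\b$ in the reduced $f_I$. By construction, $\b\in\Ap(\mathcal{S},\{\a_i,\a_j\})$.

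Next, I would exploit the cycle equation $\phi_{d-2}(\mathbf{f})=0$ to show maximality. For each $k\notin\{i,j\}$, collect the coefficient of $\mathbf{e}_{I\setminus\{k\}}$ in $\phi_{d-2}(\mathbf{f})$: this coefficient contains the term $\pm\mathbf{t}^{\a_k+\b}$ coming from $f_I$, and the vanishing of the whole sum forces another coefficient $f_{I'}$ (with $I'$ differing from $I$ by swapping $k$ for some $\ell\in\{i,j\}$) to contribute a monomial $\mathbf{t}^{\b'}$ satisfying $\b+\a_k=\b'+\a_\ell$. In particular $\b+\a_k-\a_\ell\in\mathcal{S}$, so $\b+\a_k\notin\Ap(\mathcal{S},\{\a_i,\a_j\})$. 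Applying Proposition~\ref{max-2} to $E'=\{\a_i,\a_j\}$ then gives the desired maximal element, proving the conjecture.

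The hard part will be justifying that the cancellation in the first step terminates with some $f_I$ still nonzero. Each elementary reduction modifies several coefficients simultaneously, and without careful bookkeeping a cascade of cancellations could in principle eliminate every monomial, forcing $[\mathbf{f}]=0$ and making the plan vacuous. A natural device is to work with $\mathbf{f}$ chosen $\mathcal{S}$-homogeneous and of a Betti degree $\b_0$ with $\beta_{e-2,\b_0}\neq 0$: the minimal free resolution then forbids the class from vanishing, and homogeneity limits the available cancellations to finitely many monomials in each coefficient. Even so, the combinatorics of the Koszul differential for general $d$ is substantially richer than in Lemmas~\ref{3i} and~\ref{4i}, where only a handful of cases arose because the $\ker\phi_2$ had a rigid shape. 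Note that the hypothesis $\depth=2$ must enter crucially at this step, since Corollary~\ref{cor:beta-w-d} tells us that $H_{d-2}$ is the top non-vanishing Koszul homology and this is precisely what pins down the shape of surviving cycles; this is also consistent with the authors' warning that the natural analogue for higher depths fails, so any proof cannot proceed by a purely formal induction on depth.
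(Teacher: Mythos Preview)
The statement is presented in the paper as an open \emph{conjecture}, established only for $d\le 4$ via Theorems~\ref{th:depth2} and~\ref{th_depth2b}; there is no general proof in the paper to compare your attempt against.

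Your overall plan is sound and the gap is precisely where you locate it, but it is more than a bookkeeping issue. The reduction step, as you describe it, asks for too much: you want \emph{every} surviving coefficient supported in the relevant Ap\'ery set, which in the homogeneous picture (identifying $H_{d-2}(K_\bullet)_\c$ with $\widetilde H_{d-3}(T_\c)$) amounts to representing the nonzero class by a cycle supported \emph{only} on size-$(d{-}2)$ faces that are maximal in $T_\c$. Already for $d=4$ this can fail: take $T_\c$ to be one solid triangle and one hollow triangle sharing an edge (the configuration of Figure~\ref{fig:Tb}); every representative of the nontrivial $1$-cycle is forced to use at least one edge of the solid triangle. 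What you actually need is far weaker: a \emph{single} maximal size-$(d{-}2)$ face $I$ in the support of some representative suffices, since then $\b=\c-\sum_{k\in I}\a_k\in\Ap(\mathcal S,\{\a_i,\a_j\})$ with $\{\a_i,\a_j\}=E\setminus I$, and your cycle-equation argument at each $I\setminus\{k\}$ yields $\b+\a_k\notin\Ap(\mathcal S,\{\a_i,\a_j\})$ for all $k\in I$, so Proposition~\ref{max-2} applies. This weaker claim appears to follow cleanly: let $T'\subseteq T_\c$ be the subcomplex generated by the size-$(d{-}1)$ faces; since $\widetilde H_{d-3}(T_\c)\neq 0$ rules out $T_\c\in\{\partial\Delta^{d-1},\Delta^{d-1}\}$, some size-$(d{-}1)$ face $E\setminus\{\a_v\}$ is absent, whence $\a_v$ lies in every size-$(d{-}1)$ face of $T_\c$ and $T'$ is a cone on $\a_v$, hence $\widetilde H_{d-3}(T')=0$; thus no nonzero class is represented by a cycle supported entirely in $T'$. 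If this argument survives scrutiny it would settle the conjecture in all dimensions, which goes beyond what the paper claims, so you should verify it with care.
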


The conjecture holds for $d \leq 4$. Indeed, for $d \leq 2$, because $\Ap(\mathcal{S},E)$ is a finite set (see, e.g. \cite[Proposition 3.2.]{OjVi3} or \cite[Theorem 2.6]{Barroso-2021}), for $d=3$ by Theorem \ref{th:depth2} and for $d=4$ by Theorem \ref{th_depth2b}.

We emphasize that we cannot replace $2$ by $3$ in Conjecture \ref{conj_depth} as the following example shows.

\begin{example}\label{ex:counterex}
Let $\mathcal{S} \subset \mathbb{N}^4$ be the affine semigroup generated by the columns $\a_1, \ldots, \a_6$ of the following matrix 
\[
A = \left(
\begin{array}{ccccccc}
2 & 0 & 0 & 0 & 5 & 7  \\
0 & 2 & 0 & 0 & 5 & 7  \\
0 & 0 & 2 & 0 & 7 & 5  \\
0 & 0 & 0 & 2 & 7 & 5 
\end{array}
\right).	
\]
Using Singular \cite{Singular}, one can easily check that $\depth(\Bbbk[\mathcal{S}])=3$ and, with the help of Proposition~\ref{prop:Ap}, that $\Ap(\mathcal{S},\{i,j,k\})$ does not have maximal elements for every $1 \leq i < j < k \leq 4$.
\end{example}

In spite of the above example, it may happen that $\Bbbk[\mathcal{S}]$ have depth three and $\Ap(\mathcal{S},E')$ has a maximal element for some $E' \subset E$ of cardinality three. 

\begin{example}
Let $\mathcal{S} \subset \mathbb{N}^4$ be the affine semigroup generated by the columns $\a_1, \ldots, \a_8$ of the following matrix 
\[
A = \left(
\begin{array}{cccccccc}
 2 & 0 & 0 & 0 & 3 & 4 & 2 & 5\\
 0 & 3 & 0 & 0 & 3 & 1 & 3 & 0\\
 0 & 0 & 2 & 0 & 3 & 2 & 1 & 7\\
 0 & 0 & 0 & 3 & 3 & 5 & 7 & 1
\end{array}
\right).
\]
Using Singular \cite{Singular}, one can easily check that $\depth(\Bbbk[\mathcal{S}])=3$. 

Let $\b = 2 \a_6+ \a_7 + 5 \a_8$, using the GAP (\cite{GAP}) package \texttt{numericalsgps} (\cite{numericalsgps}), one can check that $\mathbf{b} - \a_i \not\in \mathcal{S}$ for $i \in \{1,3,4\}$, that is, $\mathbf{b} \in \Ap(\mathcal{S},\{1,3,4\})$, and that $\b + \a_i \not\in \Ap(\mathcal{S},\{1,3,4\})$ for every $i \in \{1, \ldots, 8\}$. So, $\b$ is a maximal element of $\Ap(\mathcal{S},\{1,3,4\})$ with respect to $\preceq_\mathcal{S}$.
\end{example}

This last result gives a necessary and sufficient condition for what illustrated in the above example occur.

\begin{proposition}\label{prop:depth3}
Let $d =4$. If $\depth(\Bbbk[\mathcal{S}]) = 3$, then $\Ap(\mathcal{S},E')$ has a maximal element with respect to $\preceq_{\mathcal{S}}$ for 
some $E' \subset E$ of cardinality three if and only if there exists $\b \in \mathcal{S}$ such that $T_\b$ is not connected and has, at least, an isolated vertex.
\end{proposition}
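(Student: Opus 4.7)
The plan is to deduce both directions by translating Proposition~\ref{max-2} into the language of the simplicial complex $T_\b$. Since $d=4$, a cardinality-three subset $E' \subset E$ has a singleton complement $E \setminus E' = \{\a_i\}$, and Proposition~\ref{max-2} then asserts that $\Ap(\mathcal{S}, E')$ has a maximal element with respect to $\preceq_{\mathcal{S}}$ precisely when some $\b^* \in \Ap(\mathcal{S}, E')$ satisfies $\b^* + \a_i \notin \Ap(\mathcal{S}, E')$. The dictionary I will use is the following: a vertex $\a_i$ of $T_\b$ is isolated if and only if $\b - \a_i \in \Ap(\mathcal{S}, E \setminus \{\a_i\})$, and given such an isolated vertex, $T_\b$ is disconnected if and only if there exists $j \neq i$ with $\b - \a_j \in \mathcal{S}$ (which then produces $\{\a_j\} \in T_\b$ as a vertex in a different component).

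For the forward direction I would let $\b^*$ be as supplied by Proposition~\ref{max-2}, set $\b := \b^* + \a_i$, and verify that $\a_i$ is isolated in $T_\b$ directly from $\b^* \in \Ap(\mathcal{S}, E \setminus \{\a_i\})$, while the condition $\b \notin \Ap(\mathcal{S}, E')$ produces some $j \neq i$ with $\b - \a_j \in \mathcal{S}$; this gives a second connected component of $T_\b$. Conversely, given $\b$ with $T_\b$ disconnected and an isolated vertex $\a_i$, I would take $\b^* := \b - \a_i$ and $E' := E \setminus \{\a_i\}$: isolation forces $\b^* \in \Ap(\mathcal{S}, E')$, while disconnectedness provides $\a_j \in T_\b$ with $j \neq i$, yielding $\b - \a_j \in \mathcal{S}$ and hence $\b^* + \a_i = \b \notin \Ap(\mathcal{S}, \a_j) \supseteq \Ap(\mathcal{S}, E')$; Proposition~\ref{max-2} then delivers the required maximal element.

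The depth-three hypothesis does not actually enter the argument; it merely frames the question, since Example~\ref{ex:counterex} shows that, when $\depth(\Bbbk[\mathcal{S}])=3$, $\Ap(\mathcal{S}, E')$ need not have a maximal element for any three-element $E'$. For this reason no real obstacle is anticipated: the whole content is bookkeeping between the conditions ``$\b - \a_k \in \mathcal{S}$'' defining faces of $T_\b$ and those defining membership in the relevant Ap\'ery sets. The only subtle point worth flagging is that the second connected component of $T_\b$ must supply a genuine vertex rather than a higher face, which is automatic because every nonempty face of $T_\b$ contains its singleton subfaces.
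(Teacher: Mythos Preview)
Your argument is correct. The converse direction ($T_\b$ disconnected with an isolated vertex $\Rightarrow$ some $\Ap(\mathcal{S},E')$ has a maximal element) is handled exactly as in the paper, by reading off $\b^*=\b-\a_i$ and $E'=E\setminus\{\a_i\}$ and invoking Proposition~\ref{max-2}.

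The forward direction, however, is genuinely different from the paper's. The paper argues by contrapositive: assuming no $T_\b$ with $\b\in D(0)$ has an isolated vertex, it shows that for every $\c\in\Ap(\mathcal{S},E)$ and every $l$ the whole ray $\c+u\,\a_l$ remains in $\Ap(\mathcal{S},E\setminus\{\a_l\})$, and then uses this to exhibit, above any given element of $\Ap(\mathcal{S},E')$, a strictly larger one. You instead go directly through Proposition~\ref{max-2}: the witness $\b^*\in\Ap(\mathcal{S},E')$ with $\b^*+\a_i\notin\Ap(\mathcal{S},E')$ immediately yields $\b=\b^*+\a_i$ with $\a_i$ an isolated vertex of $T_\b$ and a second vertex $\a_j$ coming from $\b\notin\Ap(\mathcal{S},\a_j)$. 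This is shorter and more transparent; it also makes explicit, as you note, that the equivalence itself does not use the hypothesis $\depth(\Bbbk[\mathcal{S}])=3$. The paper's longer route has the mild by-product of describing the entire $\a_l$-fiber structure of $\Ap(\mathcal{S},E')$ when no isolated vertices occur, but for the proposition as stated your approach is the cleaner one.
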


\begin{proof}
If $\depth(\Bbbk[\mathcal{S}])=3$, then, by \cite[Theorem~4.1]{Campillo-Gimenez}, there exists $\b \in D(0)$, that is, there exists $\b \in \mathcal{S}$ such that $T_\b$ is disconnected. 

Suppose that the simplicial complex $T_\b$ does not have isolated vertices for every $\b \in D(0)$. We claim that $\c + u \a_l \in \Ap(\mathcal{S},E \setminus\{\a_l\})$ for every $u \in \mathbb{N}, \c \in \Ap(\mathcal{S},E)$ and $1 \leq l \leq 4$. Contrary, let us suppose that there exist $u \in \mathbb{N}, \b \in \Ap(\mathcal{S},E)$ and $1 \leq l \leq 4$ such that $\b + u\, \a_l \not\in \Ap(\mathcal{S},E \setminus\{\a_l\})$; without loss of generality, we also assume that $u$ is the smallest with this property. Since $\b + u\, \a_l \not\in \Ap(\mathcal{S},E \setminus\{\a_l\})$, there exists $1 \leq i \leq 4$, such that $\b + u\, \a_l - \a_i \in \mathcal{S}$ and, by the minimality of $u$, $\b + u \a_l - \a_j - \a_l \not\in \mathcal{S}$ for every $1 \leq j \leq 4$. So, 
if $\c = \b + u \a_l$, we have that $\a_i$ and $\a_l$ are vertices of $T_\c$ and that $\a_l$ is isolated. Therefore $\c \in D(0)$ and $T_\c$ has at least one isolated vertex in contradiction with the hypothesis. Now, given $E'=E \setminus \{\a_l\}$ and $\b \in \Ap(\mathcal{S},E')$, let $v \in \mathbb{N}$ the smallest such that $\c=\b - v \a_l \in \mathcal{S}$. Since $\c \in \Ap(\mathcal{S},E)$, by our previous claim, we have that $\c + u \a_l \in \Ap(\mathcal{S},E')$ for every $u \in \mathbb{N}$. In particular, if $u > v,$ then $\b \preceq_{\mathcal{S}} \b + (u-v)\a_l \in \Ap(\mathcal{S},E')$. Thus, $\Ap(\mathcal{S},E')$ does not have maximal elements.

Conversely, if there exists $\b \in D(0)$ such that $T_\b$ has, at least, an isolated vertex, say $\a_l$, then we have that $\b \not\in \Ap(S,\{\a_i,\a_j,\a_k\})$ and $\b - \a_l \in \Ap(S,\{\a_i,\a_j,\a_k\})$ for some permutation $\{i,j,k,l\} = \{1,2,3,4\}$; thus, by Proposition \ref{max-2}, we conclude that $\Ap(S,\{\a_i,\a_j,\a_k\})$ has a maximal element, for some  $1\leq i<j<k\leq 4$. 
\end{proof}

\noindent \textbf{Acknowledgments.} This research began during a visit by the first author to the Departamento de Matem\'aticas of the Universidad de Extremadura (Badajoz, SPAIN). Both authors would like to thank the Departamento de Matem\'aticas for its hospitality and support.

\end{document}